\definecolor{darkred}{rgb}{1,0,0} 
\definecolor{darkgreen}{rgb}{0,0.8,0}
\definecolor{darkblue}{rgb}{0,0,1}
 \numberwithin{equation}{section}
\newtheorem {Theorem}{Theorem}
\numberwithin{Theorem}{section}
\newtheorem {Lemma}[Theorem]    {Lemma}
\newtheorem {Proposition}[Theorem]{Proposition}
\newtheorem {Corollary}[Theorem]{Corollary}
\theoremstyle{definition}
\newtheorem{Definition}[Theorem]{Definition}
\newtheorem{Remark}[Theorem]{Remark}
\newtheorem{Example}[Theorem]{Example}
\newtheoremstyle{MyTheorem}
        {.6em}{.6em}              
        {\itshape}                      
        {}                              
        {\bfseries}                     
        {. }                             
        { }                             
        {\thmname{#1}\thmnumber{\addtocounter{MyTheorem}{-4}#2}\thmnote{ \bfseries #3}}
\theoremstyle{MyTheorem}
\newtheoremstyle{TheoremForIntro} 
        {.6em}{.6em}              
        {\itshape}                      
        {}                              
        {\bfseries}                     
        {.}                             
        { }                             
        {\thmname{#1}\thmnote{ \bfseries #3}}
    \theoremstyle{TheoremForIntro}
\chardef\csname pre amssym.def at\endcsname=\the\catcode`\@
\def\undefine#1{\let#1\undefined}
\def\newsymbol#1#2#3#4#5{\let\next@\relax
 \ifnum#2=\@ne\let\next@\msafam@\else
 \ifnum#2=\tw@\let\next@\msbfam@\fi\fi
 \mathchardef#1="#3\next@#4#5}
\def\mathhexbox@#1#2#3{\relax
 \ifmmode\mathpalette{}{\m@th\mathchar"#1#2#3}%
 \else\leavevmode\hbox{$\m@th\mathchar"#1#2#3$}\fi}
\def\hexnumber@#1{\ifcase#1 0\or 1\or 2\or 3\or 4\or 5\or 6\or 7\or 8\or
 9\or A\or B\or C\or D\or E\or F\fi}
\font\teneufm=eufm10
\font\seveneufm=eufm7
\font\fiveeufm=eufm5
\newcommand{\fsu}{{\mathfrak {su}}}
\newcommand{\fsl}{{\mathfrak {sl}}}
\newcommand{\sSL}{{\mathsf {SL}}}
\newcommand{\sU}{{\mathsf U}}
\newcommand{\Hod}{{\mathrm {Hod}}}
\newcommand{\VHS}{{\mathsf{VHS}}}
\newcommand{\NHC}{{\mathsf{NHC}}}
\newcommand{\BB}{{\mathsf{BB}}}
\newcommand{\BAA}{{\mathsf{BAA}}}
\newcommand{\ABA}{{\mathsf{ABA}}}
\newcommand{\Hom}{{\mathrm{Hom}}}
\newcommand{\End}{{\mathrm{End}}}
\newcommand{\ad}{{\mathrm{ad}}}
\newcommand{\rdR}{{\mathrm{dR}}}
\newcommand{\rL}{{\mathrm{L}}}
\newcommand{\rH}{{\mathrm{H}}}
\newcommand{\rW}{{\mathrm{W}}}
\newcommand{\Tr}{{\mathrm{Tr}}}
\newcommand{\Ee}{{\mathcal E}}
\newcommand{\Ff}{{\mathcal F}}
\newcommand{\Gg}{{\mathcal G}}
\newcommand{\Hh}{{\mathcal H}}
\newcommand{\Kk}{{\mathcal K}}
\newcommand{\Mm}{{\mathcal M}}
\newcommand{\Oo}{{\mathcal O}}
\newcommand{\Uu}{{\mathcal U}}
\newcommand{\Ss}{{\mathcal S}}
\newcommand{\Tt}{{\mathcal T}}
\newcommand{\mtrx}[1]{\left (\begin{matrix}#1\end{matrix}\right)}
\def    \C      {{\mathbb C}}
\def    \R      {{\mathbb R}}
\def    \Z      {{\mathbb Z}}
\def    \lra     {{\longrightarrow}}
\def    \haf    {{\frac{1}{2}}}
\def    \p      {\partial}
\def    \rk     {\operatorname{rk}}
\newcommand{\An}{\xymatrix{ *{\circ} \ar@{-}[r]|*\dir{ } & *{\circ}\ar@{-}[r]&{\cdots}&*{\circ}\ar@{-}[l]|*\dir{ }\ar@{-}[r]|*\dir{ }&*{\circ} }}
\newcommand{\Anlabel}{\xymatrix@R=.25em{ *{\circ}\ar@<-1ex>@{}[d]^{\alpha_{1}} \ar@{-}[r]|*\dir{ } & *{\circ}\ar@<-1ex>@{}[d]^{\alpha_{2}}\ar@{-}[r]&{\cdots}&*{\circ}\ar@{-}[l]|*\dir{ }\ar@{-}[r]|*\dir{ }\ar@<-2ex>@{}[d]^{\alpha_{n-1}}&*{\circ}\ar@<-1ex>@{}[d]^{\alpha_{n}}\\&&&& }}
\newcommand{\Bn}{\xymatrix{ *{\circ} \ar@{-}[r]|*\dir{ } & *{\circ}\ar@{-}[r]&{\cdots}&*{\circ}\ar@{-}[l]|*\dir{ }\ar@{=}[r]|*\dir{>}&*{\circ} }}
\newcommand{\Bnlabel}{\xymatrix@R=.25em{ *{\circ}\ar@<-1ex>@{}[d]^{\alpha_{1}} \ar@{-}[r]|*\dir{ } & *{\circ}\ar@<-1ex>@{}[d]^{\alpha_{2}}\ar@{-}[r]&{\cdots}&*{\circ}\ar@{-}[l]|*\dir{ }\ar@{=}[r]|*\dir{>}\ar@<-2ex>@{}[d]^{\alpha_{n-1}}&*{\circ}\ar@<-1ex>@{}[d]^{\alpha_{n}}\\&&&& }}
\newcommand{\Cn}{\xymatrix{ *{\circ} \ar@{-}[r]|*\dir{ } & *{\circ}\ar@{-}[r]&{\cdots}&*{\circ}\ar@{-}[l]|*\dir{ }\ar@{=}[r]|*\dir{<}&*{\circ} }}
\newcommand{\Cnlabel}{\xymatrix@R=.25em{ *{\circ}\ar@<-1ex>@{}[d]^{\alpha_{1}} \ar@{-}[r]|*\dir{ } & *{\circ}\ar@<-1ex>@{}[d]^{\alpha_{2}}\ar@{-}[r]&{\cdots}&*{\circ}\ar@{-}[l]|*\dir{ }\ar@{=}[r]|*\dir{<}\ar@<-2ex>@{}[d]^{\alpha_{n-1}}&*{\circ}\ar@<-1ex>@{}[d]^{\alpha_{n}}\\&&&& }}
\newcommand{\Dn}{\xymatrix@R=.25em{&&&&*{\circ} \\ *{\circ} \ar@{-}[r]|*\dir{ } & *{\circ}\ar@{-}[r]&{\cdots}&*{\circ}\ar@{-}[l]|*\dir{ }\ar@{-}[ur]|*\dir{ }\ar@{-}[dr]|*\dir{ }& \\ &&&&*{\circ} }}
\newcommand{\Dnlabel}{\xymatrix@R=.25em{&&&&*{\circ}\ar@<-1ex>@{}[d]^{\alpha_{n-1}} \\ *{\circ}\ar@<-1ex>@{}[d]^{\alpha_{1}} \ar@{-}[r]|*\dir{ } & *{\circ}\ar@<-1ex>@{}[d]^{\alpha_{2}}\ar@{-}[r]&{\cdots}&*{\circ}\ar@{-}[l]|*\dir{ }\ar@{-}[ur]|*\dir{ }\ar@{-}[dr]|*\dir{ }\ar@<-4ex>@{}[d]^{\alpha_{n-2}}& \\ &&&&*{\circ}\ar@<-2ex>@{}[u]^(-.5){\alpha_{n}} }}
\newcommand{\Ibold}{{\bf I}}
\newcommand{\ubold}{{\bf u}}
\newcommand{\CBbb}{\mathbb C}
\newcommand{\PBbb}{\mathbb P}
\newcommand{\Ecal}{\mathcal E}
\newcommand{\Fcal}{\mathcal F}
\newcommand{\Gcal}{\mathcal G}
\newcommand{\Hcal}{\mathcal H}
\newcommand{\Kcal}{\mathcal K}
\newcommand{\Ocal}{\mathcal O}
\newcommand{\Scal}{\mathcal S}
\newcommand{\Tcal}{\mathcal T}
\newcommand{\Ascr}{\mathscr A}
\newcommand{\Dscr}{\mathscr D}
\newcommand{\Hscr}{\mathscr H}
\newcommand{\slfrak}{\mathfrak{sl}}
\newcommand{\sufrak}{\mathfrak{su}}
\DeclareMathOperator{\Met}{Met}
\DeclareMathOperator{\rank}{rank}
\DeclareMathOperator{\Gr}{Gr}
\DeclareMathOperator{\ind}{index }
\newcommand{\sbt}{\,\begin{picture}(-1,1)(-1,-1)\circle*{2}\end{picture}\ }
\newcommand*{\dt}[1]{\overset{\sbt}{#1}}
\newcommand{\dbar}{\bar\partial}
\newcommand{\isorightarrow}{\xrightarrow{
   \,\smash{\raisebox{-0.5ex}{\ensuremath{\sim}}}\,}}
\newcommand\widefrown[1]{%
\savestack{\tmpbox}{\stretchto{%
  \scaleto{%
    \scalerel*[\widthof{\ensuremath{#1}}]{\kern-.6pt\frown\kern-.6pt}%
    {\rule[-\textheight/2]{1ex}{\textheight}}
  }{\textheight}%
}{0.5ex}}%
\stackon[1pt]{#1}{\tmpbox}%
}
\theoremstyle{remark}
\author{Brian Collier}
\email{briancollier01@gmail.com}
\author{Richard Wentworth}
\email{raw@umd.edu}
\address{Department of Mathematics,
   University of Maryland,
   College Park, MD 20742, USA}
\thanks{B.C. is supported by the NSF under Award No. 1604263. R.W. was supported in part by NSF grants DMS-1406513 and DMS-1564373. 
 The authors also acknowledge support from NSF grants DMS-1107452, -1107263, -1107367 ``RNMS: GEometric structures And Representation varieties'' (the GEAR Network).}
\title[Conformal limits and $\lambda$-connections]{Conformal limits and the Bia{\l}ynicki-Birula stratification of the space of $\lambda$-connections}
\begin{document}
\begin{abstract}
The Bia{\l}ynicki-Birula decomposition of the space of $\lambda$-connections restricts to the Morse stratification 
on the moduli space of Higgs bundles and to the partial oper stratification on the de Rham moduli space of holomorphic connections.  For both the Morse and partial oper stratifications, every stratum is a holomorphic Lagrangian fibration over a component of the space of complex variations of Hodge structure. In this paper, we generalize known results for the Hitchin section and the space of opers to arbitrary strata. These include the following:
a biholomorphic identification of the fibers of the two strata over a stable variation of Hodge structure  via the ``$\hbar$-conformal limit'' of Gaiotto, a proof that the  fibers of the Morse and partial oper stratifications are transverse at the base point, and an explicit parametrization of the fibers as half-dimensional affine spaces,
\end{abstract}

\subjclass[2010]{Primary: 58D27; Secondary: 14D20, 14D21, 32G13}
\date{\today}

\maketitle
\setlength{\smallskipamount}{6pt}
\setlength{\medskipamount}{10pt}
\setlength{\bigskipamount}{16pt}

\section{Introduction}

For a compact Riemann surface $X$, both Hitchin's moduli space of Higgs bundles and the de Rham moduli space of
holomorphic connections are interesting holomorphic symplectic  manifolds. While the celebrated nonabelian Hodge correspondence provides a homeomorphism between the two, they are in fact quite different as complex analytic spaces. 
For example, the Higgs moduli space contains positive dimensional subvarieties (e.g.\ the moduli space of holomorphic bundles), whereas the de Rham moduli space is Stein. Nevertheless, the complex structures are related in the sense that their composition gives a third  complex structure which in fact defines a hyperk\"ahler structure. By a construction of Deligne \cite{DeligneLetter},  further elucidated by Simpson in \cite{SimpsonHodgeFiltration}, the associated twistor space may be interpreted as a moduli space of \emph{$\lambda$-connections}. A further key property of the twistor space is the existence of a $\CBbb^\ast$-action extending the one defined by Hitchin on the Higgs moduli space.

The Higgs and de Rham moduli spaces each have  distinguished holomorphic Lagrangian submanifolds which can be parametrized by a particular vector space of holomorphic differentials on $X$.    On the Higgs bundle side, this space was defined by Hitchin in \cite{liegroupsteichmuller} as the image of a section of the Hitchin fibration, and it generalizes the harmonic maps parametrization  of Teichm\"uller space (cf.\ \cite{TeichOfHarmonic}). 
On the de Rham side, this is the space of \emph{opers},  which may be considered as a generalization of complex projective structures on $X$ (cf.\  \cite{BeilinsonDrinfeldOPERS}). 

In fact, the Hitchin section and the set of opers  each form a closed stratum of natural stratifications on the spaces of Higgs bundles and holomorphic connections coming from an application of Bia{\l}ynicki-Birula ($\BB$) theory for the $\CBbb^\ast$-action mentioned above \cite{SimpsonDeRhamStrata}. 
In both cases, each stratum is a holomorphic fibration by Lagrangians over a connected component of the set of complex variations of Hodge structure.  In the language introduced by Kapustin and Witten \cite{KapustinWitten}, the fibers are submanifolds  of type ($\BAA$) and ($\ABA$), respectively.

The goal of this paper is to extend the many relationships between the Hitchin section and the space of opers to all components of the $\BB$-stratification on the space of $\lambda$-connections.
The main results are:
\begin{enumerate}
\item a proof of the existence of an ``$\hbar$-conformal limit'' (in the sense of Gaiotto \cite{GaiottoConj}) for every Higgs bundle whose $\C^*$-limit is stable; 
\item  a proof that the de Rham fiber is transverse to the Higgs bundle fiber at each stable complex variation of Hodge structure;
\item
 a global parametrization of each of the fibers in the $\BB$-stratification as a half dimensional affine space and a proof that their intersections with the Higgs and de Rham moduli spaces are submanifolds of type ($\BAA$) and ($\ABA$), respectively.
\end{enumerate}
The following subsections contain more precise statements of these items.

\subsection{Conformal limits}
Let $E\to X$ be a complex vector bundle of rank $n$ and degree zero.
 We denote by $\Mm_{\rH}$, $\Mm_{\rdR}$, and $\Mm_{\Hod}$ the moduli spaces of  Higgs bundles, holomorphic connections, and $\lambda$-connections, respectively, on the underlying smooth bundle $E$ (see Section \ref{sec:moduli} for definitions).
There is a holomorphic map $\lambda    : \Mm_\Hod\to \CBbb$ with
\[\xymatrix{\Mm_\rH = \lambda^{-1}(0)&\text{and}&\Mm_\rdR=\lambda^{-1}(1)~.}\]

In \cite{SimpsonDeRhamStrata}, Simpson showed that the $\lambda$-equivariant $\C^*$-action 
on $\Mm_{\Hod}$ given by
 $\nabla_\lambda\mapsto \xi\cdot \nabla_\lambda$, $\xi\in \C^\ast$,
defines a Bia{\l}ynicki-Birula type decomposition. 
The strata $\rW_\alpha\subset\Mm_{\Hod}$ are labeled by the components $\VHS_\alpha\subset\Mm_{\rH}$ of the fixed point set. These are the Higgs bundles defining \emph{complex variations of Hodge structure} ($\VHS$). On each stratum there is a holomorphic map $\rW_\alpha\to \VHS_\alpha$ with affine fibers.  
The restrictions $\rW_\alpha^0$, $\rW_\alpha^1$ to $\lambda=0, 1$ are respectively the \emph{Morse strata} of $\Mm_{\rH}$ defined by Hitchin and the \emph{partial oper strata} of $\Mm_{\rdR}$ defined by Simpson (see Section \ref{sec:strata}).

A choice of hermitian metric $h$ on $E$ associates hermitian adjoints $\ast_h$ to endomorphisms and Chern connections 
$\dbar_E+\partial_E^h$ to $\dbar$-operators. The nonabelian Hodge correspondence ($\NHC$) produces  a special hermitian metric, known as the \emph{harmonic metric}, to both a polystable Higgs bundle and a completely reducible flat connection. Through this association, the $\NHC$ defines a homeomorphism:
\begin{equation}
    \label{eq NAH intro}
    \Tt: \Mm_\rH \isorightarrow \Mm_\rdR : [(\dbar_E, \Phi)] \mapsto [D=\dbar_E+\partial_E^h+\Phi+\Phi^\ast]\ 
\end{equation}
between Higgs bundles and flat connections  
(see Section \ref{sec:nhc} for more details).

For  a stable Higgs bundle $(\dbar_E,\Phi)$ with harmonic metric $h$, there are two ways to scale the correspondence in \eqref{eq NAH intro}
 -- one uses the $\CBbb^\ast$-action on $\Mm_{\rH}$ and the other a so-called \emph{real twistor line} in $\Mm_{\Hod}$. More precisely, 
if $R>0$, then $(\dbar_E, R\Phi)$ is also stable, and so we obtain harmonic metrics $h_R$. Next, for $\xi\in \CBbb^\ast$, the connection
$$
\dbar_E+\partial_E^{h_R}+\xi^{-1}R\Phi+\xi R\Phi^{\ast_{h_R}}
$$
is also flat. If we keep $\hbar=R^{-1}\xi$ fixed, we obtain a family of flat connections parametrized by $R$:
\begin{equation} \label{eqn:gaiotto}
D_{R,\hbar}=D_{R,\hbar}(\dbar_E,\Phi)=\dbar_E+\partial_E^{h_R}+\hbar^{-1}\Phi+\hbar R^2\Phi^{\ast_{h_R}}\ .
\end{equation}
The limiting flat connection $\lim_{R\to 0} D_{R,\hbar}$, 
when it exists, is called the \emph{$\hbar$-conformal limit} of the Higgs bundle $(\dbar_E,\Phi)$. The existence of a limit is a gauge invariant property.
For example, if $\hbar=1$ and $(\dbar_E,\Phi)$ is a fixed point of the $\CBbb^\ast$-action, then the conformal limit is just the associated flat connection from the $\NHC$.
More generally, in \cite{GaiottoConj} Gaiotto conjectured that for $[(\dbar_E,\Phi)]$ in the Hitchin component, the 
$\hbar$-conformal limit exists and lies in the space of opers, and moreover this gives a biholomorphism between these spaces. This conjecture was recently verified in \cite{GaiottoLimitsOPERS}.

In this paper we generalize the conformal limit correspondence.  
The first main result is the following. 
\begin{Theorem}[{\sc conformal limit}] \label{thm:ConformalLimit}
Let $[(\dbar_E,\Phi)]\in \Mm_\rH$ be such that its limit $[(\dbar_0,\Phi_0)]\in \VHS_\alpha$ under the $\CBbb^\ast$-action is represented by a stable Higgs bundle. Then 
 the $\hbar$-conformal limit of $[(\dbar_E,\Phi)]$ exists.  Moreover, the conformal limit gives a biholomorphism between the fibers $\rW^0_\alpha(\dbar_0,\Phi_0)$ and
 $\rW^1_\alpha(\dbar_0,\Phi_0)$ of $\rW_\alpha^0$ and $\rW_\alpha^1$ over $[(\dbar_0,\Phi_0)]$.
\end{Theorem}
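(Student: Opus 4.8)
The plan is to reduce the existence of the $\hbar$-conformal limit to a statement about solutions of Hitchin's equations with a controlled parameter, and then to promote the resulting limiting connection to a biholomorphism of fibers by functoriality of the construction. First I would fix a representative $(\dbar_E,\Phi)$ whose $\CBbb^\ast$-limit is the stable $\VHS$ $(\dbar_0,\Phi_0)$, and write down explicitly the one-parameter family of Higgs bundles $(\dbar_E, t\Phi)$ degenerating to $(\dbar_0,\Phi_0)$ as $t\to 0$; since stability is open and the limit is stable, for $t$ small enough $(\dbar_E,t\Phi)$ is stable and carries a harmonic metric $h_t$. The reparametrization $t = R$, $\hbar = R^{-1}\xi$ in \eqref{eqn:gaiotto} turns the question into: do the connections $D_{R,\hbar}$ converge as $R\to 0$? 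I would analyze the associated family of Hitchin-type equations for $h_R$, using the gauge in which the $\VHS$ structure is diagonalized (the weight decomposition $E = \bigoplus E_k$ with $\Phi_0\colon E_k\to E_{k+1}\otimes K$), and show that in this gauge the harmonic metrics $h_R$, after an appropriate $R$-dependent rescaling on the weight summands, converge to a smooth limiting metric. This is the analytic heart of the argument: one needs a priori $C^0$ (hence, by elliptic bootstrapping, $C^\infty$) estimates on the rescaled metrics uniform in $R$, exploiting that the limiting equation is precisely the harmonic metric equation for the stable $\VHS$, which has a unique smooth solution. The limiting connection $D_{0,\hbar} := \lim_{R\to 0} D_{R,\hbar}$ then exists and, by construction, has $\hbar^{-1}\Phi$ as a nilpotent deformation of $\dbar_0 + \partial_{E}^{h_0}$, so it lands in $\rW^1_\alpha(\dbar_0,\Phi_0)$; conversely its $\CBbb^\ast$-limit (letting $\hbar\to\infty$ in the appropriate sense, or reading off the graded pieces) recovers $(\dbar_0,\Phi_0)$, so the limit is compatible with the $\BB$-stratification.

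Next I would upgrade pointwise existence to a map of fibers. The fiber $\rW^0_\alpha(\dbar_0,\Phi_0)$ consists of (isomorphism classes of) Higgs bundles whose $\CBbb^\ast$-limit is $[(\dbar_0,\Phi_0)]$, and likewise $\rW^1_\alpha(\dbar_0,\Phi_0)$ for $\lambda$-connections at $\lambda=1$; I would define $\mathcal{C}_\hbar\colon \rW^0_\alpha(\dbar_0,\Phi_0)\to \rW^1_\alpha(\dbar_0,\Phi_0)$ by sending $[(\dbar_E,\Phi)]$ to its $\hbar$-conformal limit. Holomorphic dependence on parameters (both the moduli of the Higgs bundle and $\hbar$) would follow from the smooth, uniform-in-$R$ convergence together with the holomorphic dependence of harmonic metrics on stable Higgs data — i.e. one differentiates the family of Hitchin equations in the (anti)holomorphic directions and passes the $\partial/\dbar$ through the limit using the uniform estimates. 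For injectivity and surjectivity I would construct the inverse directly: given a partial oper type $\lambda$-connection in $\rW^1_\alpha$, run the real twistor line backwards, using Simpson's and Mochizuki's results that such connections have harmonic metrics and that the reverse scaling $D_{R,\hbar}$ as $R\to\infty$ (equivalently the $\CBbb^\ast$-action on the Higgs side) converges; the two constructions are mutually inverse because they are two descriptions of the same twistor-line limit, so composing them is the identity by uniqueness of limits.

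The main obstacle I expect is the uniform analytic estimate: controlling the harmonic metrics $h_R$ as $R\to 0$ after the correct weight-dependent rescaling. The difficulty is that naively $t\Phi\to 0$, so the limiting equation is that of the \emph{polystable} bundle $(\dbar_E,0)$, which is generally strictly semistable or has the wrong limit — the point of the rescaling (conjugating by $\diag(\ldots, R^{-k},\ldots)$ on the weight grading, or similar) is exactly to keep $\Phi_0$ alive in the limit while killing the off-graded pieces of $\Phi$. One must show this rescaling is the right one and that it yields a convergent family; I would model this on the method of \cite{GaiottoLimitsOPERS} for the Hitchin section, where the Hitchin-section structure makes the rescaling transparent, and argue that for a general stratum the $\VHS$ weight filtration supplies the analogous structure. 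A secondary technical point is checking that the resulting limit is independent of the chosen representative and of $R$-reparametrization (gauge invariance), and that the fiber over a stable $\VHS$ is a genuine complex manifold on which the comparison map is holomorphic — this should follow from deformation theory of the $\BB$-strata as in \cite{SimpsonDeRhamStrata}, combined with the smoothness of the base point guaranteed by the stability hypothesis.
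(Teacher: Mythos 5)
There is a genuine gap, and it sits at the point you yourself flag as "the main obstacle." Your rescaling argument is only well defined, and only closes up, once you know that an arbitrary point of $\rW^0_\alpha(\dbar_0,\Phi_0)$ can be put in a normal form adapted to the weight decomposition of the stable $\VHS$ — concretely, that it has a representative $(\dbar_0+\beta,\Phi_0+\varphi)$ with $\beta\in\Omega^{0,1}(N_+)$, $\varphi\in\Omega^{1,0}(L\oplus N_+)$ and, crucially, satisfying the slice equation $D'(\beta,\varphi)=0$ (the $\BB$-slice $\Ss^+_{(\dbar_0,\Phi_0)}$ of Definition \ref{def:BBslice}, via Propositions \ref{prop stable man of VHS normal form} and \ref{prop:good-gauge} and Corollary \ref{cor:ph}). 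Without this, the "limiting equation after rescaling" is \emph{not} the harmonic metric equation of the $\VHS$: conjugating $h$ by $\diag(R^{m_j/2})$ and letting $R\to 0$ leaves over the extra terms $\partial_E\beta+[\Phi_0^\ast,\varphi]$, which vanish exactly when $(\beta,\varphi)$ lies in the slice. This is also why no hard uniform a priori estimates are needed: once the data is in the slice, the rescaled family of equations $N_{(\ubold,R)}(f)=0$ extends smoothly through $R=0$, is solved by $f=0$ at $R=0$, and has invertible linearization there (an inductive argument on the grading using stability, as in Lemma \ref{lem:no-kernel}), so the implicit function theorem produces the solutions for small $R$ and the smooth convergence of $D_{R,\hbar}$ — this is the route the paper takes, following \cite{GaiottoLimitsOPERS}. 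Your proposed uniform $C^0$ estimates are never supplied, and as stated they would be aimed at the wrong limiting equation.

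The second gap is the biholomorphism claim. Your inverse construction — "run the real twistor line backwards" and take an $R\to\infty$ limit on the de Rham side — is a different and substantially harder analytic problem, and the assertion that the two constructions are "mutually inverse by uniqueness of limits" does not follow: the conformal limit is not a twistor-line limit (the paper is explicit that twistor lines and $\CBbb^\ast$-orbits do not respect the fibers), so there is no common limit whose uniqueness you can invoke. In the paper the biholomorphism costs nothing extra once the slice theory is in place: both fibers are globally parametrized by $\Ss^+_{(\dbar_0,\Phi_0)}$ via the holomorphic maps $p_0$ and $p_\hbar$ of \eqref{eqn:p-lambda} (Theorem \ref{thm:global-slice}), the explicit computation of the limit identifies the conformal limit of $p_0(\ubold)$ with $\hbar^{-1}\cdot p_\hbar(\ubold)$, and hence the conformal limit map is $\hbar^{-1}\cdot p_\hbar\circ p_0^{-1}$, a composition of biholomorphisms. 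So the missing ingredient in your write-up is precisely the global slice parametrization of both fibers; with it, both the existence of the limit and the biholomorphism follow, and without it neither of your two main steps closes.
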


\subsection{Transversality of fibers at the $\VHS$}
 As mentioned above the moduli spaces $\Mm_\rH$ and $\Mm_\rdR$ are holomorphic symplectic manifolds. Let $(I,\omega_\C^I)$ and $(J,\omega_\C^J)$ denote the complex structures and holomorphic symplectic forms on $\Mm_\rH$ and $\Mm_\rdR$, respectively.  These are not preserved by the homeomorphism \eqref{eq NAH intro}, and their composition $K=I\circ J$ defines a third complex structure, giving $\Mm_\rH$ the structure of a hyperk\"ahler manifold. 
\begin{Proposition} \label{prop:lagrange}
Let $[(\bar\p_E,\Phi)]\in\VHS_\alpha$ be stable.  Then the fiber $\rW_\alpha^0(\dbar_E,\Phi)$ of $\rW_\alpha^0$ over $[(\bar\p_E,\Phi)]$ is an $\omega^I_\C$ holomorphic Lagrangian submanifold of $\Mm_{\rH}$ $($type $($$\BAA$$)$$)$, and $\rW_\alpha^1(\bar\p_E,\Phi)$ is an $\omega^J_\C$ holomorphic Lagrangian submanifold of $\Mm_{\rdR}$ $($type $($$\ABA$$)$$)$.\footnote{The second statement in Proposition \ref{prop:lagrange} was already proved in \cite[Lemma 7.3]{SimpsonDeRhamStrata}.}
\end{Proposition}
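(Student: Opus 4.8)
The plan is to reduce the statement to three standard inputs: the deformation-theoretic description of the tangent spaces to the strata, the Hodge-theoretic decomposition of the deformation complex at a $\VHS$, and the explicit formulas for the holomorphic symplectic forms $\omega^I_\C$ and $\omega^J_\C$. First I would recall that at a stable point $[(\dbar_E,\Phi)]\in\VHS_\alpha$ the tangent space to $\Mm_\rH$ is the first hypercohomology $\HBbb^1$ of the deformation complex $\End(E)\xrightarrow{[\Phi,\cdot]}\End(E)\otimes K_X$. The Higgs field being a fixed point of the $\CBbb^\ast$-action equips $\End(E)$ with a weight grading $\bigoplus_k \End(E)_k$ with $\Phi$ of weight $-1$, inducing a grading on $\HBbb^1=\bigoplus_k \HBbb^1_k$. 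The Morse stratum $\rW^0_\alpha$ through the $\VHS$ is, by Simpson's and Hitchin's description (Section \ref{sec:strata}), precisely the ``ascending'' part under the $\CBbb^\ast$-flow; its tangent space at the fixed point is the sum of non-negative (or non-positive, depending on sign conventions) weight pieces, while the tangent to the fixed-point locus $\VHS_\alpha$ is the weight-zero piece $\HBbb^1_0$.

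Next I would write down $\omega^I_\C$ on $\HBbb^1$ as the cup-product pairing composed with the trace $\End(E)\otimes\End(E)\to\CBbb$ and integration over $X$, landing in $\rH^2(X,\CBbb)\cong\CBbb$ (Serre duality). The key observation is that this pairing is homogeneous of a fixed total weight under the $\CBbb^\ast$-grading — concretely, $\omega^I_\C(v_j,v_k)=0$ unless $j+k$ equals that fixed weight — which forces the non-negative-weight subspace to be isotropic once one checks the arithmetic of the weights works out so that two non-negative weights cannot sum to the pairing weight except by both being in a ``middle'' range already excluded. Combined with the dimension count (the stratum is half-dimensional, which is recorded in item (3) of the introduction and follows from the Lagrangian fibration structure $\rW_\alpha\to\VHS_\alpha$ with affine fibers of complementary dimension), isotropic plus half-dimensional gives Lagrangian. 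This proves the ($\BAA$) statement. For the de Rham side, one either repeats the argument verbatim with the $\lambda$-connection deformation complex and $\omega^J_\C$ — the grading now being the induced one on $\HBbb^1$ of $\End(E)\xrightarrow{\nabla}\End(E)\otimes K_X$ — or simply cites \cite[Lemma 7.3]{SimpsonDeRhamStrata} as the footnote already does.

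I expect the main obstacle to be bookkeeping the weights correctly: one must verify that the holomorphicity of the fiber (that it is a complex submanifold for $I$, resp.\ $J$) follows because the stratum is cut out by the $\CBbb^\ast$-flow which acts holomorphically, and then that the \emph{same} weight-graded subspace that describes the stratum's tangent space is the one on which the symplectic form vanishes — i.e.\ matching up the BB-theoretic sign conventions for ``attracting set'' with the weight at which the cup-product pairing concentrates. A secondary point requiring care is stability: stability of $(\dbar_E,\Phi)$ guarantees the relevant $\HBbb^0$ and $\HBbb^2$ vanish (up to scalars), so that $\HBbb^1$ genuinely computes the smooth tangent space and the pairing is nondegenerate, which is what makes ``isotropic and half-dimensional'' equivalent to ``Lagrangian.'' Neither of these is deep, but getting the grading conventions consistent with Section \ref{sec:strata} is where the actual work lies.
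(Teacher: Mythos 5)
There is a genuine gap: your isotropy argument lives entirely at the fixed point. You identify the tangent space at $[(\dbar_E,\Phi)]$ with a weight subspace of $\HBbb^1$ and check that the cup-product pairing vanishes there, but the proposition asserts that the whole fiber $\rW^0_\alpha(\dbar_E,\Phi)$ is Lagrangian, so you must verify isotropy of the tangent space at \emph{every} point of the fiber, almost all of which are not fixed points and carry no a priori weight decomposition of their tangent spaces. The paper supplies exactly this missing ingredient: Corollary \ref{cor:ph} (resp.\ Corollary \ref{cor:pdr}) parametrizes the fiber globally by the $\BB$-slice $\Scal^+_{(\dbar_E,\Phi)}$, so that at any point of the fiber tangent vectors are represented by pairs $(\beta,\varphi)\in\Omega^{0,1}(N_+)\oplus\Omega^{1,0}(L\oplus N_+)$; since the grading is additive and every element of $N_+$ is traceless, $\Tr(\varphi_2\wedge\beta_1)=\Tr(\varphi_1\wedge\beta_2)=0$, and hence $\omega^I_\C$ (and, with $\mu=\beta+\varphi$, also $\omega^J_\C$ on the de Rham side) vanishes identically along the fiber. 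One could instead try to propagate isotropy from the fixed point by $\CBbb^\ast$-equivariance (the forms have weight one and the flow contracts the fiber onto the $\VHS$), but making that rigorous again requires an equivariant linearization of the fiber, i.e.\ essentially the slice and Kuranishi statements you are not invoking.

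Moreover, the weight arithmetic that you flag as the ``main obstacle'' does not resolve in the way you suggest. The pairing has weight one, and the tangent space to the \emph{fiber} at the fixed point is the strictly positive part $\Hcal^1_+=\bigoplus_{j\geq 1}\Hcal^1_j$, for which $j+k\geq 2>1$ and isotropy is immediate. The non-negative-weight subspace you work with is the tangent space to the full stratum $\rW^0_\alpha$ (which also moves along $\VHS_\alpha$), and it is \emph{not} isotropic: weight $0$ pairs nontrivially with weight $1$, consistent with the stratum being coisotropic rather than Lagrangian. So the claim that ``two non-negative weights cannot sum to the pairing weight'' is false, and distinguishing the fiber from the stratum is precisely the bookkeeping that must be done. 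A minor further point: deriving half-dimensionality ``from the Lagrangian fibration structure'' is circular, since that structure is part of what is being proved; the paper obtains it from an index computation (Lemma \ref{lem:dimension}, used in Lemma \ref{Lemma dim of stable man}), which you should cite instead.
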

In particular, the tangent space splits as 
 \[T_{\Tt(\bar\p_E,\Phi)}\Mm_{\rdR}=T_{\Tt(\bar\p_E,\Phi)}\rW_\alpha^1(\bar\p_E,\Phi)\oplus K\big(T_{\Tt(\bar\p_E,\Phi)}\rW_\alpha^1(\bar\p_E,\Phi)\big)~.\]
Moreover,  $\Tt(\rW^0_\alpha(\bar\p_E,\Phi))$ intersects $\rW^1_\alpha(\bar\p_E,\Phi)$ at $\Tt(\bar\p_E,\Phi).$ We shall prove  that this intersection is transverse, generalizing
the known result for the Hitchin component and opers (cf. \cite{LabourieWentworthFuchLocus}).
\begin{Theorem}[{\sc Transversality at the $\VHS$}] \label{thm:transverse}
      For each stable $(\bar\p_E,\Phi)\in\VHS_\alpha$, the transformation  
    $\mu\mapsto \mu- K(\mu)$
    maps the tangent space $T_{\Tt(\bar\p_E,\Phi)}\rW^1_\alpha(\bar\p_E,\Phi)$ linearly isomorphically onto $d\Tt(T_{(\bar\p_E,\Phi)}\rW^0_\alpha(\bar\p_E,\Phi))$. In particular, $$\Tt(\rW^0_\alpha(\bar\p_E,\Phi))\pitchfork\rW^1_\alpha(\bar\p_E,\Phi)$$ at $\Tt(\bar\p_E,\Phi).$
\end{Theorem}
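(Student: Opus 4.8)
The plan is to reduce the transversality statement for the stratum fibers to an infinitesimal statement about the hyperkähler structure at the variation of Hodge structure, and then to exploit the eigenspace decomposition of the Hodge-theoretic weight operator. First I would recall that at a stable $(\bar\p_E,\Phi)\in\VHS_\alpha$ the deformation complex of Higgs bundles is governed, via harmonic theory for the harmonic metric $h$, by the first cohomology of the Higgs complex $\End E \xrightarrow{[\Phi,\cdot]} \End E\otimes K_X$, and similarly the de Rham deformations by $\mathbb{H}^1$ of the de Rham complex for $D=\Tt(\bar\p_E,\Phi)$. Because the point is a fixed point of the $\CBbb^\ast$-action, the bundle $\End E$ carries a weight grading $\End E=\bigoplus_k (\End E)_k$ with $\Phi\colon(\End E)_k\to(\End E)_{k+1}\otimes K_X$, and the tangent spaces $T\rW^0_\alpha$, $T\rW^1_\alpha$ are precisely the sums of the negative (resp.\ non-negative, by the $\BB$-flow convention) weight pieces of the respective hypercohomologies — this is exactly the content of the fibration structure $\rW_\alpha\to\VHS_\alpha$ with affine fibers described earlier in the excerpt.

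The core of the argument is then linear algebra on a single cohomology group with three complex structures $I,J,K=IJ$. I would first show that $K$ exchanges the Higgs and de Rham deformation spaces in a way compatible with the weight grading: concretely, under the standard identifications, a harmonic representative of a Higgs deformation $(\dot\beta,\dot\varphi)$ and of a de Rham deformation differ by the operation $\Phi\leftrightarrow-\Phi^{\ast_h}$, which reverses weights, so $K$ maps weight-$k$ pieces to weight-$(-k)$ pieces. Then the map $\mu\mapsto\mu-K(\mu)$ sends the de Rham fiber tangent space $T\rW^1_\alpha=\bigoplus_{k\le 0}(\cdots)$ into $\Mm_\rH$; to see it lands in, and surjects onto, $d\Tt(T\rW^0_\alpha)=\bigoplus_{k<0}(\cdots)$ one checks that the $k=0$ part of $\mu-K(\mu)$ vanishes or is absorbed (the weight-zero piece is the tangent to $\VHS_\alpha$ itself, which is common to both fibers and maps to zero under $1-K$ since $K$ fixes it up to the $I$–$J$ discrepancy), while for $k<0$ the component $\mu_k$ is recovered from $(\mu-K\mu)_k$ because $K\mu$ only contributes in weights $>0$ there. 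Injectivity follows from the same weight bookkeeping: if $\mu-K\mu=0$ then matching weights forces each $\mu_k=0$. Finally, transversality $\Tt(\rW^0_\alpha)\pitchfork\rW^1_\alpha$ at $\Tt(\bar\p_E,\Phi)$ is equivalent to the tangent spaces spanning $T_{\Tt(\bar\p_E,\Phi)}\Mm_\rdR$, which is exactly the splitting $T\Mm_\rdR=T\rW^1_\alpha\oplus K(T\rW^1_\alpha)$ from Proposition~\ref{prop:lagrange} combined with the fact that $d\Tt(T\rW^0_\alpha)$ is the image of $1-K$ restricted to $T\rW^1_\alpha$, which has the same dimension as $K(T\rW^1_\alpha)$ and meets $T\rW^1_\alpha$ trivially by the injectivity just shown.

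The step I expect to be the main obstacle is pinning down precisely how $K$ interacts with the weight grading and with the harmonic-representative identifications of the two deformation spaces — i.e.\ making rigorous the claim that "$K$ reverses weights." This requires being careful about three things at once: the identification of $T\Mm_\rH$ and $T\Mm_\rdR$ via nonabelian Hodge (which involves the harmonic metric and is not holomorphic), the precise sign and normalization conventions in $D=\bar\p_E+\partial_E^h+\Phi+\Phi^{\ast_h}$, and the fact that harmonic representatives at a $\VHS$ respect the grading (a consequence of the $\CBbb^\ast$-action being by isometries of $h$ at the fixed point, so the Laplacian commutes with the grading operator). Once that compatibility is established cleanly, the rest is the weight-counting bookkeeping sketched above. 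A secondary technical point is handling the weight-zero piece correctly: one must verify that the common $\VHS_\alpha$-directions do not obstruct either the injectivity or the surjectivity of $1-K$ onto $d\Tt(T\rW^0_\alpha)$, which I would do by noting that on weight zero $K$ acts as the identity (the fixed-point locus is $K$-holomorphic as well, being a component of the hyperkähler fixed locus), so $1-K$ kills it and it contributes to neither side's "new" directions.
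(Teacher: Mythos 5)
There is a genuine gap, and it is exactly at the point you flag as ``the main obstacle'' but then do not resolve. The theorem is a statement about $d\Tt\bigl(T_{(\dbar_E,\Phi)}\rW^0_\alpha(\dbar_E,\Phi)\bigr)$, and $\Tt$ is defined through the harmonic metric, so any proof must control the first-order variation of that metric as one moves away from the $\VHS$ inside the fiber. The paper's proof does precisely this: by Proposition \ref{Prop first variation of metric} the derivative of the harmonic metric vanishes at the origin of the Hodge slice, so for a path $\ubold(t)\in\Scal^+_{(\dbar_E,\Phi)}$ with velocity $(\beta,\varphi)\in\Hcal^1_+(\dbar_E,\Phi)$ one gets $d\Tt(\beta,\varphi)=\beta+\varphi-\beta^{\ast}+\varphi^{\ast}$, which by \eqref{eq complex structures on de Rham} is $\mu-K(\mu)$ for $\mu=\beta+\varphi$, while Corollaries \ref{cor:ph} and \ref{cor:pdr} identify both fiber tangent spaces with the same space $\Hcal^1_+(\dbar_E,\Phi)$. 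Your substitute --- weight bookkeeping on the graded pieces --- cannot determine $d\Tt$: the nonabelian Hodge map is transcendental, its differential a priori contains terms coming from the varying harmonic metric, and the grading alone says nothing about them. The observation that harmonic representatives at the $\VHS$ respect the grading is true but is not the needed statement; what is needed is the vanishing of the first variation of the harmonic metric in slice directions (the generalization of Labourie--Wentworth proved in Proposition \ref{Prop first variation of metric}), which your sketch neither proves nor invokes.

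In addition, the bookkeeping itself contains errors. The fibers $\rW^0_\alpha(\dbar_E,\Phi)$ and $\rW^1_\alpha(\dbar_E,\Phi)$ sit over a single point of $\VHS_\alpha$, so their tangent spaces are $\Hcal^1_+=\bigoplus_{j\geq 1}\Hcal^1_j$ and do \emph{not} contain the weight-zero piece $\Hcal^1_0$, which is tangent to $\VHS_\alpha$ and transverse to the fibers; there is no ``common weight-zero part'' to be absorbed. Moreover $K$ cannot act as the identity on any nonzero subspace, since $K^2=-\Id$; and with the paper's conventions $K(\beta,\varphi)=(-\varphi^{\ast},\beta^{\ast})$ sends $\Hcal^1_j$ to $\Hcal^1_{1-j}$, so no graded piece is $K$-invariant and $K(\Hcal^1_+)=\bigoplus_{j\leq 0}\Hcal^1_j$ is a complement of $\Hcal^1_+$. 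That is what underlies the splitting quoted after Proposition \ref{prop:lagrange}, i.e.\ transversality of $T\rW^1_\alpha(\dbar_E,\Phi)$ with $K\bigl(T\rW^1_\alpha(\dbar_E,\Phi)\bigr)$; but upgrading this to transversality with $\Tt(\rW^0_\alpha(\dbar_E,\Phi))$ requires the identity $d\Tt(\beta,\varphi)=\mu-K(\mu)$ on fiber directions, which is exactly the analytic content missing from your argument. (Your concluding dimension count deducing transversality from injectivity of $1-K$ is fine once that identity is in place.)
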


\subsection{Global slices}
Theorems \ref{thm:ConformalLimit} and \ref{thm:transverse} are consequences of the existence of a  \emph{global slice} in the deformation theory about a $\VHS$. More precisely, 
for a stable Higgs bundle $(\dbar_E,\Phi)$, the $\NHC$ allows us to define a \emph{Hodge slice} $\Scal_{(\dbar_E,\Phi)}$ in the space of Higgs bundles
(see Definition \ref{def:slice}). 
Via the Kuranishi map, this
  gives a local coordinate chart of $\Mm_{\rH}$ about the point $[(\dbar_E,\Phi)]$.  
If furthermore $[(\dbar_E,\Phi)]\in \VHS_\alpha$, then there is a holomorphic splitting of $(E,\dbar_E)$. 
Since the deformation theory of $\Mm_{\rH}$ is compatible with this splitting, this extra structure defines a natural subvariety $\Scal^+_{(\dbar_E,\Phi)}\subset \Scal_{(\dbar_E,\Phi)}$ 
which we call the \emph{Bia{\l}ynicki-Birula} \emph{slice} (see Sections \ref{sec:hodge-slice} and \ref{sec:bb-slice}). 

We will show that the Bia{\l}ynicki-Birula slice $\Ss^+_{(\dbar_E,\Phi)}$ is biholomorphic to the fibers $\rW_\alpha^0(\bar\p_E,\Phi)$ and $\rW_\alpha^1(\bar\p_E,\Phi),$ generalizing the parameterizations of the Hitchin section and opers the Hitchin base (see Section \ref{sec: HitchinSection and Opers}).
\begin{Theorem}[{\sc Global slice}] \label{thm:global-slice}
Let $[(\bar\p_E,\Phi)]\in\VHS_\alpha$ be a stable Higgs bundle with $\BB$-slice $\Ss^+_{(\bar\p_E,\Phi)}$,
and let $\rW_\alpha(\dbar_E,\Phi)$ $($resp.\ $\rW_\alpha^\xi(\dbar_E,\Phi)$$)$ denote the fiber of $\rW_\alpha \to \VHS_\alpha$ $($resp.\ $\rW_\alpha\cap\lambda^{-1}(\xi)\to \VHS_\alpha$$)$ over $[(\bar\p_E,\Phi)]$. 

 Then:
\begin{enumerate}
\item the Kuranishi map gives a biholomorphism of $\Ss^+_{(\bar\p_E,\Phi)}$ with a complex vector space of half the dimension of $\Mm_{\rH}$;
\item the natural projection $p_{\rH}: \Ss^+_{(\bar\p_E,\Phi)}\to \Mm_{\rH}$ is a biholomorphic embedding onto the Morse fiber $\rW_\alpha^0(\dbar_E,\Phi)$;
\item 
there is a holomorphic  map $p_{\Hod}: \Scal^+_{(\dbar_E,\Phi)}\times\CBbb \to  \Mm_{\Hod} $, extending $p_{\rH}$ and
making the following diagram commute
\begin{equation*}
\begin{split}
\xymatrix{
\Scal^+_{(\dbar_E,\Phi)}\times\CBbb  \ar[dr]_{{\rm pr}_2} \ar[rr]^{p_{\Hod}} && \Mm_{\Hod} \ar[dl]^{\lambda} \\
&\CBbb &
}
\end{split}
\end{equation*}
where ${\rm pr}_2$ denotes projection to the second factor.
Moreover, $p_{\Hod}$ is a biholomorphic embedding onto the $\BB$-fiber $\rW_\alpha(\dbar_E,\Phi)$.   In particular, the fibers $\rW^0_\alpha{(\dbar_E,\Phi)}$ and $\rW^1_\alpha{(\dbar_E,\Phi)}$ are biholomorphic.
\end{enumerate}
\end{Theorem}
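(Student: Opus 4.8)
The plan is to build the Bia\l ynicki-Birula slice $\Ss^+_{(\dbar_E,\Phi)}$ explicitly as a linear subspace of the Hodge slice $\Ss_{(\dbar_E,\Phi)}$ cut out by the grading coming from the $\VHS$ structure, and then track it through the Kuranishi/nonabelian Hodge machinery in the three settings $\lambda=0$, $\lambda=1$, and general $\lambda$. First I would recall the deformation complex of the Higgs bundle $(\dbar_E,\Phi)$ with its harmonic metric $h$: since $[(\dbar_E,\Phi)]\in\VHS_\alpha$, the bundle carries a holomorphic grading $E=\bigoplus_j E_j$ with $\Phi:E_j\to E_{j-1}\otimes K_X$, which induces a weight decomposition on $\End E$ and hence on the Dolbeault complex $B^0\xrightarrow{D''} B^1\xrightarrow{D''} B^2$ computing $\rH^\bullet$. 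The $\C^*$-action on $\Mm_\rH$ acts on the harmonic-gauge representatives with weights, and the Hodge slice $\Ss_{(\dbar_E,\Phi)}$ — the space of harmonic representatives of nearby Higgs bundles in the chosen gauge — inherits a weight grading. The $\BB$-slice $\Ss^+_{(\dbar_E,\Phi)}$ is defined to be the sum of the strictly positive (or nonnegative, depending on the sign convention fixed in Section \ref{sec:bb-slice}) weight spaces. Part (1) then amounts to: (a) the Kuranishi obstruction map vanishes identically on $\Ss^+$ because the quadratic term lands in a weight range disjoint from that of $H^2$ restricted to $\Ss^+$ — this is the standard argument that the $\BB$-stratum is smooth and equals its Zariski tangent space; and (b) a dimension count, using Serre duality on the weight-graded pieces of the deformation complex, showing $\dim_\C\Ss^+ = \tfrac12\dim_\C\Mm_\rH$. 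The last point is essentially the observation that the pairing $H^1\times H^1\to H^2=\C$ induced by $\omega^I_\C$ identifies the positive-weight part with the dual of the negative-weight part, so they have equal dimension and together span $H^1$.

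For part (2), I would show that the natural projection $p_\rH:\Ss^+_{(\dbar_E,\Phi)}\to\Mm_\rH$, sending a slice element to its isomorphism class as a Higgs bundle, is an embedding onto $\rW^0_\alpha(\dbar_E,\Phi)$. The image lands in the Morse stratum because the $\C^*$-limit of any Higgs bundle represented in $\Ss^+$ is $[(\dbar_E,\Phi)]$: scaling by $\xi\to 0$ multiplies each weight-$k$ component by $\xi^k$, and positivity of the weights forces convergence to the fixed point. Injectivity and openness onto the fiber follow from the fact that the Kuranishi slice is a genuine local chart (already established for $\Ss_{(\dbar_E,\Phi)}$) together with the $\C^*$-equivariance of the construction, which promotes the local statement to a global one on the fiber — this is the standard linearization-of-the-$\BB$-cell argument. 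Surjectivity onto $\rW^0_\alpha(\dbar_E,\Phi)$ is where I would use that every point of the Morse fiber flows to $[(\dbar_E,\Phi)]$ and hence, after applying the appropriate harmonic gauge transformation depending holomorphically on the point, is represented in $\Ss^+$.

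For part (3), I would promote the whole picture to $\lambda$-connections. Given an element $\sigma\in\Ss^+_{(\dbar_E,\Phi)}$, write $\sigma=(\dbar_E+\beta,\Phi+\varphi)$ and form, for each $\lambda\in\C$, the $\lambda$-connection $\dbar_E+\lambda\partial_E^{h}+\beta+\lambda\beta^{\ast_h}+\Phi+\varphi+\lambda(\Phi+\varphi)^{\ast_h}$ — more precisely, I would use the $\lambda$-dependent harmonic metric (or the Deligne gluing / real twistor line construction of Section \ref{sec:nhc}) to produce an actual flat $\lambda$-connection, and define $p_\Hod(\sigma,\lambda)$ to be its class in $\Mm_\Hod$. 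Commutativity of the triangle with ${\rm pr}_2$ is immediate from the construction. That $p_\Hod$ restricts to $p_\rH$ at $\lambda=0$ is built in. The key claim — $p_\Hod$ is a biholomorphic embedding onto $\rW_\alpha(\dbar_E,\Phi)$ — follows by the same $\C^*$-equivariance argument as in part (2), now using Simpson's $\BB$-decomposition of $\Mm_\Hod$ (\cite{SimpsonDeRhamStrata}): the $\lambda$-equivariant $\C^*$-action scales $\lambda$ and the positive-weight slice directions compatibly, so the fiber $\rW_\alpha(\dbar_E,\Phi)$ is exactly the image of $\Ss^+\times\C$. Restricting to $\lambda=1$ and invoking that $p_\Hod|_{\lambda=1}$ is an embedding onto $\rW^1_\alpha(\dbar_E,\Phi)$ then yields the final ``in particular'' statement that $\rW^0_\alpha(\dbar_E,\Phi)\cong\rW^1_\alpha(\dbar_E,\Phi)$.

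\textbf{Main obstacle.} The routine parts are the weight bookkeeping and the dimension count. The genuine difficulty is establishing that the $\lambda$-family of flat $\lambda$-connections in part (3) is well-defined and depends holomorphically on both $\sigma$ and $\lambda$ — i.e.\ that the harmonic metric $h_\sigma^\lambda$ solving the relevant Hermitian–Einstein-type equation exists, is unique, and varies holomorphically — uniformly over the (noncompact) slice $\Ss^+$, and that the resulting map is not merely injective but a biholomorphism onto the \emph{entire} $\BB$-fiber rather than an open subset. This last surjectivity is the crux: it requires knowing that the $\C^*$-flow on $\rW_\alpha$ is \emph{complete} and retracts the fiber onto the $\VHS$ in a way compatible with the slice, which in turn rests on the properness/stability hypothesis ``$[(\dbar_0,\Phi_0)]$ stable'' that controls the Kuranishi chart globally along the fiber. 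I expect the bulk of the work (carried out in Sections \ref{sec:hodge-slice}–\ref{sec:bb-slice}) to be in setting up this $\lambda$-dependent harmonic-metric analysis and proving the completeness of the flow.
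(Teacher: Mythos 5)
Parts (1) and (2) of your plan are essentially on track. Your dimension count via the pairing $\omega_I^\C$ (the positive and non-positive weight pieces of $\Hcal^1(\dbar_E,\Phi)$ are complementary isotropics, hence Lagrangian, hence each half-dimensional) is a legitimate alternative to the paper's argument, which instead rolls up the positive subcomplex and computes $\ind\,\slash\hskip-.095in D$ by Riemann--Roch (Lemma \ref{lem:dimension}); and your local-to-global mechanism (contracting weighted $\CBbb^\ast$-action plus the local Kuranishi chart) is exactly how Theorem \ref{thm:kuranishi} and Corollary \ref{cor:ph} work, although note that at a stable point $H^2$ of the full deformation complex already vanishes, so there is no ``obstruction in disjoint weights'' to kill, and your surjectivity step in (2) silently uses the substantive gauge-fixing result (Proposition \ref{prop:good-gauge}, the recursive solution of $(D'')^\ast D''(f_j)=\cdots$ grade by grade) rather than a mere ``appropriate harmonic gauge transformation.''

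The genuine gap is in part (3). You propose to define $p_{\Hod}(\sigma,\lambda)$ by running the real twistor line (equivalently, the $\lambda$-scaled nonabelian Hodge correspondence with a $\sigma$-dependent harmonic metric $h^\lambda_\sigma$) through each slice point $\sigma=(\dbar_E+\beta,\Phi+\varphi)$. This fails for two reasons. First, the harmonic metric does not vary holomorphically in $\sigma$ (the correspondence $\Tt$ intertwines different complex structures), so the resulting map would not be holomorphic. Second, and more fundamentally, the real twistor line through $\sigma$ does \emph{not} lie in the fiber $\rW_\alpha(\dbar_E,\Phi)$ unless $\sigma$ is itself the $\VHS$: at $\lambda=1$ your map has image $\Tt(\rW^0_\alpha(\dbar_E,\Phi))$, and the paper stresses that $\Tt$ does not carry $\rW^0_\alpha$ to $\rW^1_\alpha$ (this very discrepancy is the content of the conformal limit theorem, and is visible already for the Fuchsian point, where $\Tt$ produces real holonomy while opers do not have real holonomy in general). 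The correct construction freezes the harmonic metric $h$ of the base $\VHS$ and is purely explicit: $p_\lambda(\beta,\varphi)=[(\lambda,\ \dbar_E+\lambda\Phi^{\ast_h}+\beta,\ \lambda\partial_E^h+\Phi+\varphi)]$ as in \eqref{eqn:p-lambda}, i.e.\ one adds the slice data linearly to the $\lambda$-rescaled twistor line through the fixed point \eqref{eq twistor line}; integrability comes from the slice equations, holomorphy in $(\beta,\varphi,\lambda)$ is manifest, and the biholomorphism onto the fiber at each $\lambda$ reduces, via the $\CBbb^\ast$-action, to the cases $\lambda=0$ (Corollary \ref{cor:ph}) and $\lambda=1$ (Corollary \ref{cor:pdr}, resting on Propositions \ref{prop:dr-normal-form} and \ref{prop:good-gauge2}). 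Consequently your ``main obstacle'' (existence, uniqueness and holomorphic dependence of $h^\lambda_\sigma$, completeness of the flow) is not where the work lies: no $\sigma$- or $\lambda$-dependent harmonic metric analysis is needed for this theorem, and the real technical input is the normal-form and gauge-fixing propositions in the Higgs and de Rham pictures.
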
 

By standard $\BB$-theory, the fibers $\rW_\alpha^0(\dbar_E,\Phi)\subset \Mm_{\rH}$ of the Morse strata are affine spaces. 
The theorem above immediately shows that this  property generalizes to the fibers of the partial oper stratification in $\Mm_\rdR$. 
\begin{Corollary}
For each stable $[(\bar\p_E,\Phi)]\in\VHS_\alpha$ the fiber $\rW^1_\alpha{(\dbar_E,\Phi)}\subset\Mm_\rdR$ of $\rW_\alpha \to \VHS_\alpha$ over $[(\bar\p_E,\Phi)]$ is affine.
\end{Corollary}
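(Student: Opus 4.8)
The plan is to deduce the Corollary directly from Theorem~\ref{thm:global-slice}, since in the present setting ``affine'' means biholomorphic to a finite-dimensional complex vector space, and that is precisely the description the theorem supplies. The whole argument is a matter of restricting the global slice to $\lambda=1$.

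First I would isolate the $\lambda=1$ part of the $\BB$-fiber. By Theorem~\ref{thm:global-slice}(3) the map $p_{\Hod}\colon\Scal^+_{(\dbar_E,\Phi)}\times\CBbb\to\Mm_{\Hod}$ is a biholomorphic embedding onto $\rW_\alpha(\dbar_E,\Phi)$ and satisfies $\lambda\circ p_{\Hod}={\rm pr}_2$. Hence $p_{\Hod}^{-1}\big(\lambda^{-1}(1)\big)=\Scal^+_{(\dbar_E,\Phi)}\times\{1\}$, and restricting the embedding identifies $\Scal^+_{(\dbar_E,\Phi)}\times\{1\}$ biholomorphically with $\rW_\alpha(\dbar_E,\Phi)\cap\lambda^{-1}(1)$, which by definition is the partial oper fiber $\rW^1_\alpha(\dbar_E,\Phi)$. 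Thus $\rW^1_\alpha(\dbar_E,\Phi)$ is biholomorphic to the $\BB$-slice $\Scal^+_{(\dbar_E,\Phi)}$.

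Next I would invoke Theorem~\ref{thm:global-slice}(1): the Kuranishi map identifies $\Scal^+_{(\dbar_E,\Phi)}$ biholomorphically with a complex vector space of dimension $\tfrac12\dim_{\C}\Mm_\rH$. Composing this with the biholomorphism of the previous step exhibits $\rW^1_\alpha(\dbar_E,\Phi)$ as biholomorphic to a complex vector space, i.e.\ as an affine space, which is the assertion. Alternatively, one may use the ``in particular'' clause of Theorem~\ref{thm:global-slice}(3), namely that $\rW^0_\alpha(\dbar_E,\Phi)$ and $\rW^1_\alpha(\dbar_E,\Phi)$ are biholomorphic, together with the fact recalled in the text that $\rW^0_\alpha(\dbar_E,\Phi)$ is an affine space by standard Bia{\l}ynicki--Birula theory.

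There is no real obstacle here: all of the substance is contained in Theorem~\ref{thm:global-slice}, whose proof is the technical core of the paper. The only point that needs to be stated with care is the identification of the partial oper fiber over $[(\dbar_E,\Phi)]$ with $\rW_\alpha(\dbar_E,\Phi)\cap\lambda^{-1}(1)$, which is immediate from the definition of $\rW^1_\alpha$ in Section~\ref{sec:strata} and the commutativity of the diagram above.
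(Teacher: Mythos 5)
Your proof is correct and follows essentially the same route as the paper: the Corollary is read off immediately from Theorem \ref{thm:global-slice}, either by restricting $p_{\Hod}$ to $\lambda=1$ and composing with the Kuranishi identification of $\Scal^+_{(\dbar_E,\Phi)}$ with a complex vector space, or equivalently via the biholomorphism $\rW^0_\alpha(\dbar_E,\Phi)\simeq\rW^1_\alpha(\dbar_E,\Phi)$ together with the affineness of the Morse fiber from standard Bia{\l}ynicki--Birula theory, which is exactly the paper's one-line justification.
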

In fact,  the biholomorphism $\rW_\alpha^0(\bar\p_0,\Phi_0)\simeq \rW_\alpha^1(\bar\p_0,\Phi_0)$ in Theorem \ref{thm:global-slice} (3) comes from the conformal limit.
\begin{Corollary} \label{cor:ConformalLimit}
    Let $p_\hbar:\Ss^+_{(\bar\p_E,\Phi)}\to \rW^\hbar_\alpha(\bar\p_E,\Phi)$ denote the biholomorphism obtained by restricting  $p_{\Hod}$ from Theorem \ref{thm:global-slice} to $\Ss^+_{(\bar\p_E,\Phi)}\times\{\hbar\}$.  Then the $\hbar$-conformal limit through the Higgs bundle $p_0(\ubold)$ is $\hbar^{-1}\cdot p_\hbar(\ubold)$. 
 \end{Corollary}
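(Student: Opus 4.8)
The plan is to compare the family $p_{\Hod}(\ubold,\hbar)$ built in Theorem \ref{thm:global-slice} with Gaiotto's family $D_{R,\hbar}$ of equation \eqref{eqn:gaiotto}, using the $\C^*$-equivariance of the whole construction. First I would recall the two sources of $\C^*$-action: on $\Mm_{\Hod}$ the $\lambda$-equivariant action $\nabla_\lambda\mapsto \xi\cdot\nabla_\lambda$ rescales $\lambda$ by $\xi$, and on the slice side the construction of $p_{\Hod}$ is compatible with the $\BB$-structure, which means $p_{\Hod}(\ubold,\hbar)$ and $p_{\Hod}(\ubold, \xi\hbar)$ differ by the $\C^*$-action on $\Mm_{\Hod}$; concretely $\xi\cdot p_{\Hod}(\ubold,\hbar) = p_{\Hod}(\ubold,\xi\hbar)$, since the second factor $\CBbb$ in the domain is exactly the $\lambda$-line and $p_{\Hod}$ intertwines multiplication on $\CBbb$ with the $\lambda$-equivariant action. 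Setting $\xi=\hbar$ and using $p_0(\ubold)=p_{\Hod}(\ubold,0)$ for the Higgs bundle, this identifies $p_\hbar(\ubold) = p_{\Hod}(\ubold,\hbar)$ with $\hbar\cdot p_{\Hod}(\ubold,1)$, reducing everything to the case $\hbar=1$ once we know that $p_{\Hod}(\ubold,1)$ is the ordinary conformal limit of $p_0(\ubold)$.

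For the $\hbar=1$ statement I would return to the definition of the $\BB$-slice and the Hodge slice. The Higgs bundle $p_0(\ubold)\in\rW^0_\alpha(\bar\p_E,\Phi)$ flows, under the $\C^*$-action as $R\to 0$ (equivalently $\xi\to 0$ in Hitchin's scaling), to the $\VHS$ point $[(\bar\p_E,\Phi)]$; this is the defining property of the Morse stratum and is encoded in the $\BB$-slice, which is modeled on the positive-weight part of the deformation complex at the fixed point. The key point is that the path $R\mapsto D_{R,\hbar=1}(p_0(\ubold))$ of flat connections in \eqref{eqn:gaiotto} is, after the gauge transformation implementing the harmonic metrics $h_R$, precisely the image under $p_{\Hod}(\,\cdot\,, R^{?})$—here I need to match Gaiotto's parameter $R$ with the $\lambda$-coordinate. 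Tracking the scaling in \eqref{eqn:gaiotto}: $D_{R,\hbar}$ has Higgs-type term $\hbar^{-1}\Phi$ and conjugate term $\hbar R^2\Phi^{\ast_{h_R}}$; comparing with a $\lambda$-connection $\lambda\,\bar\partial$-part we see that the natural identification sends the real twistor parameter to $\lambda=R\hbar$ (up to the normalization fixed in Section \ref{sec:strata}), so that $R\to 0$ corresponds to $\lambda\to 0$ and the limit lands in $\rW^1_\alpha$ via rescaling by $\hbar^{-1}$. Thus $\lim_{R\to 0}D_{R,\hbar}(p_0(\ubold)) = \hbar^{-1}\cdot p_\hbar(\ubold)$, which is the claim.

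The step I expect to be the main obstacle is the bookkeeping in the previous paragraph: showing that Gaiotto's $R$-family and the $\lambda$-family $p_{\Hod}(\ubold,\lambda)$ literally agree (not merely have the same limit), including the precise role of the harmonic metrics $h_R$ and the gauge transformations relating $(\bar\p_E, R\Phi)$-data to slice coordinates. This requires unwinding how the $\NHC$ enters the definition of $p_{\Hod}$ in the proof of Theorem \ref{thm:global-slice}, and checking that the "real twistor line" description of $D_{R,\hbar}$ is the same real curve inside $\Mm_{\Hod}$ that the slice construction produces over the segment $\{(\ubold,\lambda): \lambda\in[0,\hbar]\}$—equivalently, that the $\C^*$-orbit through $p_{\Hod}(\ubold,1)$ meets $\lambda=\hbar$ exactly at $p_\hbar(\ubold)$ and flows as $\lambda\to 0$ to $p_0(\ubold)$. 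Once the $\C^*$-equivariance of $p_{\Hod}$ in its $\CBbb$-argument is established (which is essentially built into the commuting diagram in Theorem \ref{thm:global-slice}(3) together with the $\lambda$-equivariance of the Simpson action), the corollary follows; so in practice I would isolate and prove that equivariance statement as a lemma and then deduce Corollary \ref{cor:ConformalLimit} in a couple of lines.
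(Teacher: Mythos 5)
Your argument has two gaps, one of which is fatal. First, the equivariance you want is not quite what you state: with $p_\lambda(\beta,\varphi)=[(\lambda,\dbar_E+\lambda\Phi^\ast+\beta,\lambda\partial_E^h+\Phi+\varphi)]$ as in \eqref{eqn:p-lambda}, applying the action \eqref{eq c^* action on Hod} and then the gauge transformation $g_\xi$ of \eqref{eq fixed point hol gauge trans} gives $\xi\cdot p_\lambda(\ubold)=p_{\xi\lambda}(\xi\cdot\ubold)$, where $\xi\cdot\ubold$ is the graded rescaling \eqref{eq C* action on slice} of the slice point --- not $p_{\xi\lambda}(\ubold)$. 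So the $\CBbb^\ast$-action does not relate $p_\hbar(\ubold)$ and $p_1(\ubold)$ with the \emph{same} $\ubold$, and your reduction to $\hbar=1$ silently changes the Higgs bundle whose conformal limit is being computed.

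Second, and more seriously, the claim that Gaiotto's family \eqref{eqn:gaiotto} coincides, after gauge and rescaling bookkeeping, with the slice family $p_{\Hod}(\ubold,\cdot)$ is false for $R>0$, and this is exactly where the whole content of the statement lives. At $R=\hbar=1$ the connection $D_{1,1}$ through $p_0(\ubold)=(\dbar_E+\beta,\Phi+\varphi)$ is precisely the nonabelian Hodge image $\Tt(p_0(\ubold))$, which in general does \emph{not} lie in $\rW^1_\alpha(\dbar_E,\Phi)$ (the paper emphasizes that $\Tt$ does not map $\rW^0_\alpha$ to $\rW^1_\alpha$: at a Fuchsian point the $\Tt$-images have real holonomy while opers do not), whereas $p_1(\ubold)$ does lie in that fiber. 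The two families agree only in the limit $R\to 0$, and establishing that --- including the very existence of the limit --- requires controlling the harmonic metrics $h(\ubold,R)$ of $(\dbar_E+\beta,R(\Phi+\varphi))$ as $R\to 0$. In the paper this is done analytically: one builds the approximate solution $h_R'$ by the diagonal rescaling with weights $R^{m_j/2}$, computes the limiting connection \eqref{eqn:limit-flat}, and then applies the implicit function theorem, using the invertibility statement of Lemma \ref{lem:no-kernel}, to show the true harmonic metric differs from $h_R'$ by a correction $e^{f_R}$ with $f_R\to 0$. None of this can be replaced by equivariance of $p_{\Hod}$; the corollary is not a formal consequence of Theorem \ref{thm:global-slice} but is proved jointly with Theorem \ref{thm:ConformalLimit} by this estimate-and-IFT argument, which your proposal defers to ``bookkeeping'' and never supplies.
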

Hence, the process of taking conformal limits may be regarded as a deformation of $\Mm_{\rdR}$ which moves the Morse stratification to the partial oper stratification, preserving the $\VHS$ locus.
\begin{Remark}
The comparison of the stratifications given by $W^0_\alpha$ and $W^1_\alpha$ is an interesting question. For example, 
the existence of Higgs bundles with nilpotent Higgs field implies that the fibers $W^0_\alpha(\dbar_E,\Phi)$ are not, in general, closed. On the other hand, as pointed out in \cite{SimpsonDeRhamStrata}, it is unknown  whether the fibers $W^1_\alpha(\dbar_E,\Phi)$ are themselves closed (see also Remark \ref{rmk:discontinuous} below). In \cite{SimpsonDeRhamStrata} and \cite{ClosureOfDeRhamStrata4P1}, the structure of the closure of the strata $W^1_\alpha$ is described for rank 2. 
The general case remains an open problem. 
\end{Remark}

\noindent\textbf{Acknowledgments:} It is a pleasure to thank Andy Neitzke and Carlos Simpson for enlightening conversations.


\section{Moduli spaces} \label{sec:moduli}

\subsection{The de Rham moduli space of flat connections}
As in the Introduction, we
fix a smooth rank $n$ complex vector bundle $E\to X$ with a trivialization of the determinant line bundle $\det E$. Let $\Dscr=\Dscr(E,\sSL(n,\C))$ denote the space of flat connections that induce the trivial connection on $\det E$. The space of all such connections is an infinite dimensional affine space modeled on the vector space $\Omega^1(X,\fsl(E))$,\footnote{Since $X$ is fixed throughout, we will henceforth omit it from the notation and write $\Omega^i(\ast)$ instead of $\Omega^i(X,\ast)$.} 
where $\fsl(E)\subset\End(E)$ denotes the subbundle of traceless endomorphisms. 
In particular, the tangent space to $\Dscr$ at a flat connection $D$ is the set of $\mu\in\Omega^1(\fsl(E))$ so that $D+\mu$ is flat to first order: 
\[T_D\Dscr=\{\mu\in\Omega^1(\fsl(E))~|~D\mu=0\}~.\] 

The complex structure on $\sSL(n,\C)$ defines a complex structure on $\Dscr$; namely, 
\begin{equation*}
    \label{eq J complex structure on de Rham}J(\mu)=i\mu~.
\end{equation*}
Moreover, the alternating form 
\begin{equation}\label{eq: ABG hol symplectic form on de Rham}
    \omega_J^\C(\mu,\nu)=\int\limits_X\Tr(\mu\wedge\nu)
\end{equation}
defines a ($J$-)holomorphic symplectic form on $\Dscr.$ This is the Atiyah-Bott-Goldman-Narasimhan symplectic form. Note that both $J$ and $\omega_J^\C$ are independent of the Riemann surface structure $X$. 

The gauge group $\Gg(E)$ of smooth  automorphisms of $E$ that act trivially  on $\det E$ acts on the space of flat connections by $D\cdot g=g^{-1}\circ D\circ g$. 
A connection $D$ is called \emph{completely reducible} if every $D$-invariant subbundle $F\subset E$ has an $D$-invariant complement,  and $D$ is called \emph{irreducible} if there are 
no nontrivial $D$-invariant subbundles. The orbits of the gauge group action on the 
set of completely reducible flat connections are closed. We define the \emph{de Rham moduli space} of flat connections by
\[\Mm_{\rdR}=\Dscr^{\mathrm{cr}}/\Gg(E)~,\]
where $\Dscr^{\mathrm{cr}}\subset\Dscr$ denotes the set of completely reducible flat connections.

The complex structure $J$ and the symplectic form $\omega_J^\C$ are clearly preserved by the gauge group action. In fact,  $J$ gives $\Mm_{\rdR}$ the structure of a (singular) complex analytic space which is Stein and has complex dimension $(n^2-1)(2g-2)$  (see  \cite{Simpson:94b}, Props.\ 6.1, 7.8, and Cor.\ 11.7). The smooth locus of $\Mm_{\rdR}$ consists of equivalence classes of irreducible representations and the symplectic form $\omega^\C_J$ defines a holomorphic symplectic form on the smooth locus.

\subsection{The Higgs bundle moduli space}
We now describe the Higgs bundle moduli space and its holomorphic symplectic structure.
Throughout this paper we identify the set of holomorphic structures on $E\to X$  with the set $\Ascr=\Ascr(E)$ of $\dbar$-operators $\bar\p_E:\Omega^{0}(E)\to\Omega^{0,1}(E).$ Holomorphic bundles (or their associated coherent sheaves) will sometimes be denoted by $\Ecal$, and when we wish to emphasize this correspondence we will write $\Ecal=(E,\dbar_E)$. 
Recall that we always assume that $\det E$ has a fixed trivialization and that $\dbar$-operators induce on $\det E$ the $\dbar$-operator compatible with the trivialization.  In other notation, there is a fixed isomorphism $\Lambda^n\Ee\cong\Ocal_X$.

Let $K\to X$ denote the canonical line bundle of $X$.
\begin{Definition}\label{Def: Higgs bundle}
    An $\sSL(n,\C)$-Higgs bundle on $X$ is a pair $(\dbar_E,\Phi)$ where
    $\Phi:E\to E\otimes K$ is holomorphic: $\dbar_E\Phi=0$, and traceless: $\Tr(\Phi)=0$.
    \end{Definition} 
\noindent
The holomorphic section $\Phi$ is called the \emph{Higgs field}. 
  Let $\Hscr=\Hscr(E,\sSL(n,\C))$ denote the set of $\sSL(n,\C)$-Higgs bundles  on  $E$. 
Since $\Ascr$ is an affine space modeled on the vector space $\Omega^{0,1}(\fsl(E))$, 
the tangent space to $\Hscr$ at a Higgs bundle $(\bar\p_E,\Phi)$ is the set of $(\beta,\varphi)\in\Omega^{0,1}(\fsl(E))\oplus\Omega^{1,0}(\fsl(E))$ such that, to first order, $\Phi+\varphi$ is holomorphic with respect to $\bar\p_E+\beta.$ That is, 
\[T_{(\bar\p_E,\Phi)}\Hscr=\{(\beta,\varphi)\in\Omega^{0,1}(\fsl(E))\oplus\Omega^{1,0}(\fsl(E))~|~ \bar\p_E\varphi+[\Phi,\beta]=0\}~.\]
The space $\Hscr$ carries a natural complex structure $I$ defined by
\begin{equation*}\label{eq I complex structure Higgs}
    I(\beta,\varphi)=(i\beta,i\varphi)~.
\end{equation*}
Moreover, the alternating form 
\begin{equation}
    \label{eq hol symplectic form on Higgs bundles}\omega_I^\C((\beta_1,\varphi_1),(\beta_2,\varphi_2))=i\int\limits_X\Tr(\varphi_2\wedge\beta_1-\varphi_1\wedge\beta_2)
\end{equation}
defines a (I-)holomorphic symplectic form on $\Hscr.$ 

The gauge group $\Gg(E)$ acts on  $\Hscr$ by the adjoint action: 
\[(\bar\p_E,\Phi)\cdot g=(g^{-1}\circ\bar\p_E\circ g~,~ g^{-1}\circ\Phi\circ g)~.\]
To form the relevant moduli space of Higgs bundles we need the notion of stability. 
\begin{Definition} \label{def:stable}
  A Higgs bundle $(\dbar_E,\Phi)$,  $\Ecal=(E,\dbar_E)$,  is called 
    \begin{itemize}
        \item \emph{semistable} if for all $\Phi$-invariant subbundles $\Ff\subset\Ee$ we have $\deg(\Ff)\leq0,$
        \item \emph{stable} if for all $\Phi$-invariant subbundles $\Ff\subset\Ee$, $0<\rank(\Fcal)<\rank(\Ecal)$, we have $\deg(\Ff)<0$, and
        \item \emph{polystable} if it is semistable and a direct sum of Higgs bundles $(\Ee_j,\Phi_j)$ so that $\deg(\Ee_j)=0$ for all $j$ and each $(\Ee_j,\Phi_j)$ is stable.   
         \end{itemize} 
\end{Definition}
The orbits of the gauge group action on the set of polystable Higgs bundles are closed. We define the \emph{Higgs bundle moduli space} by
\[\Mm_{\rH}=\Hscr^{\mathrm{ps}}/\Gg(E)~,\]
where $\Hscr^{\mathrm{ps}}\subset\Hscr$ is the set of polystable Higgs bundles.

The complex structure $I$ and the symplectic form $\omega_I^\C$ are clearly preserved by the gauge group action. In fact, the complex structure $I$ gives $\Mm_{\rH}$ the structure of a (singular) complex analytic space which is a normal, quasiprojective variety of dimension $(n^2-1)(2g-2)$ \cite{selfduality,NitsureModuliofPairs,Simpson:94b}. The smooth locus of $\Mm_{\rH}$ consists of equivalence classes of stable Higgs bundles and the symplectic form $\omega^\C_I$ defines a holomorphic symplectic form on the smooth locus. 

Another key feature of the Higgs bundle moduli space is the Hitchin fibration
\begin{equation} \label{eqn:hitchin-fibration}
\Mm_\rH\lra\bigoplus\limits_{j=1}^{n-1}H^0(K^{j+1})~:~[(\bar\p_E,\Phi)]\mapsto (p_1(\Phi),\cdots, p_{n-1}(\Phi))~.
\end{equation}
 Here $p_1,\cdots,p_{n-1}$ is a basis of the invariant polynomials $\C[\fsl(n,\C)]^{\sSL(n,\C)}$ with $\deg(p_j)=j+1.$
In \cite{IntSystemFibration}, Hitchin showed that the above map is proper and gives $\Mm_\rH$ the structure of a complex integrable system. 
The feature of the above fibration relevant us is the existence of a section $\bigoplus\limits_{j=1}^{n-1}H^0(K^{j+1})\to\Mm_\rH$ of the fibration above which is called the \emph{Hitchin section} (see Section \ref{sec: HitchinSection and Opers}). Its image  defines a half dimensional $\omega_I^\C$-Lagrangian submanifold of $\Mm_\rH$ (cf. \cite{liegroupsteichmuller}).

\subsection{Nonabelian Hodge correspondence} \label{sec:nhc}
For reference, we state here the fundamental relationship between Higgs bundles and flat connections mentioned
 in the Introduction and used throughout the paper. It
  was established by Hitchin \cite{selfduality} and Donaldson \cite{harmoicmetric} when $\rk(E)=2$, and Simpson \cite{SimpsonVHS} and Corlette \cite{canonicalmetrics} in general. 
\begin{Theorem}[{$\NHC$}]
    \label{thm:nhc} 
    A Higgs bundle $(\bar\p_E,\Phi)$  is polystable if and only if there exists a hermitian metric $h$ on $E$ such that 
 \begin{equation} \label{eqn:hitchin}
F_{(\dbar_E,h)}+[\Phi,\Phi^{\ast_h}]=0\ ,
\end{equation}
    where $F_{(\dbar_E,h)}$ is the curvature of the Chern connection $\dbar_E+\partial_E^h$ associated to the pair $(\dbar_E, h)$,  and $\Phi^{*_h}$ is the $h$-adjoint of $\Phi$. Moreover, the connection $D$ defined by
\begin{equation} \label{eqn:corlette}
    D=\dbar_E+\partial_E^h+\Phi+\Phi^{\ast_h}\ ,
    \end{equation}
 is a completely reducible flat connection which is irreducible if and only if $(\bar\p_E,\Phi)$ is stable. 
    Conversely,  a flat connection $D$ is completely reducible  if and only if there exists a hermitian metric $h$ on $E$ so that
    when we express $D$ (uniquely) in the form \eqref{eqn:corlette},
    then $\dbar_E\Phi=0$.  In this case, $(\dbar_E,\Phi)$  is a polystable Higgs bundle, and it is stable  if and only if $D$ is irreducible.
\end{Theorem}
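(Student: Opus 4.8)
\textbf{Proof plan for Theorem \ref{thm:nhc} (NHC).}

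The plan is to treat the three separate implications as standard inputs from the literature, but to organize them so that the existence-and-uniqueness statement for the metric is cleanly separated from the consequences for flatness and reducibility. First I would recall the Hitchin--Kobayashi correspondence for Higgs bundles: a Higgs bundle $(\dbar_E,\Phi)$ of degree zero admits a hermitian metric $h$ solving the Hitchin equation \eqref{eqn:hitchin} if and only if $(\dbar_E,\Phi)$ is polystable, and such an $h$ is unique up to a constant on each stable summand. The ``only if'' direction is the usual stability-from-existence argument (integrate the equation against the second fundamental form of a destabilizing subsheaf and apply Chern--Weil), while the ``if'' direction for stable bundles is Hitchin's and Simpson's continuity-method / heat-flow solution of the equation; the polystable case follows by taking direct sums of the stable-summand solutions. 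I would cite \cite{selfduality} and \cite{SimpsonVHS} (and \cite{harmoicmetric}, \cite{canonicalmetrics} for the rank-2 and general-group statements) for this step rather than reproving it.

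Next I would verify that the connection $D$ in \eqref{eqn:corlette} is flat. This is a direct computation: decompose the curvature $F_D$ of $D=(\dbar_E+\partial_E^h)+(\Phi+\Phi^{*_h})$ into its $(2,0)$, $(1,1)$, and $(0,2)$ parts. The $(0,2)$ part is $\dbar_E\Phi$, which vanishes because $(\dbar_E,\Phi)$ is a Higgs bundle; the $(2,0)$ part is $\partial_E^h\Phi^{*_h}=(\dbar_E\Phi)^{*_h}$, which vanishes by the same reason; and the $(1,1)$ part is exactly $F_{(\dbar_E,h)}+[\Phi,\Phi^{*_h}]$, which vanishes by the Hitchin equation \eqref{eqn:hitchin}. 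Hence $F_D=0$. For the reducibility statement I would note that a $D$-invariant subbundle is the same as a holomorphic $\Phi$-invariant subbundle that is also preserved by $\partial_E^h$, i.e.\ a subbundle invariant under both the holomorphic structure and its metric orthogonal complement; polystability of $(\dbar_E,\Phi)$ with all summands of degree zero then forces such a subbundle to be a direct summand, giving complete reducibility, and stability forces there to be no such subbundle at all, giving irreducibility.

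For the converse I would start from a completely reducible flat connection $D$ and invoke Corlette's theorem on the existence of a harmonic metric: the associated flat $\sSL(n,\C)$-bundle admits a harmonic reduction $h$ precisely because the Zariski closure of the holonomy is reductive (equivalently $D$ is completely reducible). Given such an $h$, I would decompose $D$ uniquely into its $h$-unitary part and its $h$-self-adjoint part, and then further split the unitary part into $(1,0)$ and $(0,1)$ pieces, $D=\dbar_E+\partial_E^h+\Phi+\Phi^{*_h}$; harmonicity of $h$ is equivalent to the equation $\dbar_E\Phi=0$, so $(\dbar_E,\Phi)$ is a Higgs bundle, and then the already-proven direction of the Hitchin--Kobayashi correspondence shows it is polystable, with stability corresponding to irreducibility of $D$ exactly as above. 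I expect the main obstacle to be purely expository rather than mathematical: one must be careful that the decomposition $D=\dbar_E+\partial_E^h+\Phi+\Phi^{*_h}$ is genuinely unique once $h$ is fixed, and that the equivalence ``$h$ harmonic $\iff$ $\dbar_E\Phi=0$'' is stated with the correct normalization of degree zero; since everything else is a citation, no step requires a long new argument.
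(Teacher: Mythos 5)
The paper does not actually prove Theorem \ref{thm:nhc}: it is stated purely for reference and attributed to Hitchin \cite{selfduality}, Donaldson \cite{harmoicmetric}, Simpson \cite{SimpsonVHS}, and Corlette \cite{canonicalmetrics}, which is precisely the route you take --- cite the Hitchin--Kobayashi correspondence and Corlette's harmonic-metric theorem for the analytic content and fill in the routine flatness and (ir)reducibility verifications. Your outline is correct and consistent with the paper's treatment; the only cosmetic slip is that on a Riemann surface every $2$-form is of type $(1,1)$, so the terms $\dbar_E\Phi$ and $\partial_E^h\Phi^{\ast_h}$ you label as the $(0,2)$ and $(2,0)$ parts of $F_D$ are really just the remaining pieces of the $(1,1)$ curvature (they still vanish for exactly the reasons you give).
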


    As we have already done in the Introduction, the hermitian metric $h$ solving 
     \eqref{eqn:hitchin} is referred to as the \emph{harmonic metric} associated to the Higgs bundle $(\bar\p_E,\Phi)$.

\subsection{The hyperk\"ahler structure}

Given a hermitian metric $h$ on the bundle $E$, the Chern connection identifies the space of $\bar\p$-operators with the space of unitary connections $\Ascr=\Ascr(E,h)$ on E. Using this identification, both the space $\Dscr$ of flat connections and the space $\Hscr$ of Higgs bundles may be viewed as subsets of the total space of the cotangent bundle $T^\ast\Ascr$. Since $\Ascr$ is K\"ahler and affine, $T^\ast\Ascr$ has a natural hyperk\"ahler structure, and $\Dscr$ and $\Hscr$ are zeros of (different) complex moment maps for actions by the group $\Kk=\Kk(E,h)$ of unitary gauge transformations.

To be more precise, we may view $T^\ast\Ascr$ as the set of pairs $(d_E, \Psi)$, where $d_E\in \Ascr$ and $\Psi\in\Omega^1(i\fsu(E))$, and $i\fsu(E)$ is the bundle of hermitian endomorphisms of $E$.  
 Any connection $D$ on $E$ decomposes uniquely as 
$D=d_E+\Psi$,
 where $d_E$ is a unitary connection (with respect to $h$) and $\Psi\in\Omega^1(i\fsu(E))$.  This realizes $\Dscr\subset T^\ast\Ascr$.
Similarly, using the real isomorphism
$\Omega^1(\slfrak(E))=\Omega^{0,1}(\sufrak(E))$,
we can  associate to a Higgs bundle $(\dbar_E, \Phi)$ the pair $(d_E, \Psi)$, where $d_E=\dbar_E+\partial_E^h$ is the Chern connection of $(E,\dbar_E)$, and $\Psi=\Phi+\Phi^{\ast_h}$. This realizes $\Hscr\subset T^\ast\Ascr$.
 
 At the level of tangent vectors, 
\begin{align}  
 T(T^\ast\Ascr)&=\Omega^1(\sufrak(E))\oplus \Omega^1(i\sufrak(E))\notag \\
 &=\Omega^1(\slfrak(E)) \label{eqn:dr} \\
 &=\Omega^{0,1}(\slfrak(E))\oplus \Omega^{1,0}(\slfrak(E)) \label{eqn:cplx}
 \end{align}
 Under \eqref{eqn:dr}, $\mu\in \Omega^1(\slfrak(E))$ maps to 
$ (\alpha, \psi)$, where
\[
 \xymatrix{\alpha=\tfrac{1}{2}(\mu-\mu^\ast)&\text{and}& \psi=\tfrac{1}{2}(\mu+\mu^\ast)~ .}\]
The complex structure on $X$ induces the identification \eqref{eqn:cplx}. For
  $(\beta,\varphi)\in T_{(\dbar_E,\Phi)}\Hscr$ the identification is given by  $\mu\mapsto(\beta,\varphi)$, where 
\[\xymatrix{\beta=\alpha^{0,1}&\text{and}& \varphi=\psi^{1,0}~.}\]

With this understood, we may write down the hyperk\"ahler structure: $J$ is multiplication by $i$ on $T(T^\ast\Ascr)$ using \eqref{eqn:dr}, whereas $I$ is multiplication by $i$ on $T(T^\ast\Ascr)$ using \eqref{eqn:cplx}. The composition $K=IJ$ defines another complex structure.
Explicitly, extend  the Hodge star operator $\star$  on $X$ to $\Omega^1(\slfrak(E))$ as follows: in a local holomorphic coordinate $z$, write
$\mu\in\Omega^1(\fsl(E))$  as $\mu=Adz+Bd\bar z$  and set
\[\bar\star(\mu)=-iA^*d\bar z+iB^*dz~.\]
Then $I$, $J$, and $K$ are given by:
 \begin{equation}\label{eq complex structures on de Rham}
    \begin{array}{lll}
        I(\mu)&=&\bar\star \mu\\J(\mu)&=&i\mu\\ K(\mu)&=&-i\bar\star\mu~.
    \end{array}
\end{equation}
In terms of $(\beta, \varphi)$, we have
 \begin{equation*}\label{eq complex structures on Higgs}
    \begin{array}{lll}
        I(\beta,\varphi)&=&(i\beta,i\varphi)\\J(\beta,\varphi)&=&(i\varphi^*,-i\beta^*)\\ K(\beta,\varphi)&=&(-\varphi^*,\beta^*)~.
    \end{array}
\end{equation*}

\begin{Remark}
The hyperk\"ahler structure discussed above descends to the moduli space $\Mm_{\rH}$. Indeed, the $\NHC$, Theorem \ref{thm:nhc}, may be recast as the statement that 
$$
\Mm_{\rH}\simeq (\Hscr^{\mathrm{ps}}\cap\Dscr)/\Kcal\ ,
$$
and $\Kcal$ preserves all three complex structures.
\end{Remark}

\begin{Remark}
    Following the terminology introduced in \cite{KapustinWitten},  the letter ``A'' will  be attached to Lagrangian submanifolds, and ``B'' to holomorphic submanifolds. Three letters together refer to the three complex structures $I$, $J$, and $K$, in that order. For example, a submanifold of $\Mm_{\rH}$ is of type ($\BAA$) if it is  holomorphic with respect to $I$, and Lagrangian for $\omega_I^{\CBbb}$, whereas it is of type ($\ABA$) if it is holomorphic with respect to $J$ and Lagrangian with respect to $\omega_J^\CBbb$.
    \end{Remark}

\subsection{Fixed points of the $\C^*$-action} \label{sec:vhs}
The Higgs bundle moduli space $\Mm_\rH$ carries a holomorphic $\C^*$-action which scales the Higgs field:
\begin{equation*}
    \label{eq C*action on Higgs} \xi\cdot[(\bar\p_E,\Phi)]=[(\bar\p_E,\xi\Phi)]~.
\end{equation*}

\begin{Definition}
A Higgs bundle $(\dbar_E,\Phi)$ is said to be a \emph{complex variation of Hodge structure} ($\VHS$) if $[(\dbar_E,\Phi)]$ is a
  fixed point  of the $\CBbb^\ast$-action on $\Mm_{\rH}$.  The notation  $\VHS_\alpha$ will mean a connected component of the fixed point set, labeled by some index $\alpha$.
\end{Definition}

By properness of the  Hitchin fibration \eqref{eqn:hitchin-fibration}, the limit $\lim_{\xi\to 0}[(\bar\p_E,\xi\Phi)]$ always exists in $\Mm_\rH$ and is a $\VHS$. 
Note that 
 $[(\dbar_E,\Phi)]\in\Mm_\rH$ is a $\VHS$ if and only if there exists a $\C^*$-family of gauge transformations $g(\xi)$ so that 
\[(\dbar_E,\Phi)\cdot g(\xi)=(\dbar_E,\xi\Phi)~.\] In particular,  $\lim_{\xi\to 0}[(\bar\p_E,\xi\Phi)]=[(\bar\p_E,0)]$ 
if and only if the holomorphic bundle $(E,\bar\p_E)$ is polystable. 
The following characterizes general fixed points (see \cite[Sec.\ 7]{selfduality}). 
\begin{Proposition}\label{Prop: fixedpoints of C* action}
A point $[(\dbar_E,\Phi)]\in\Mm_\rH$ is a fixed point of the $\C^*$-action if and only if there is a splitting $E=E_1\oplus\cdots\oplus E_\ell$ with respect to which:
\begin{equation}
    \label{eq higgs field vhs}
 \xymatrix{\dbar_E=   \mtrx{\dbar_{E_1}\\&\ddots\\&&\dbar_{E_\ell}}& \text{and}&\ 
    \Phi=\mtrx{0\\\Phi_1&0\\&\ddots&\ddots\\&&\Phi_{\ell-1}&0}~,}
    \end{equation}
where $\Phi_j: E_j\to E_{j+1}\otimes K$ is holomorphic. Note that we allow $\ell=1,$ in which case $\Phi=0.$
\end{Proposition}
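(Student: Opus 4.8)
The plan is to characterize $\C^*$-fixed points via the existence of an equivariant structure and then diagonalize the resulting weight decomposition. Suppose $[(\dbar_E,\Phi)]$ is a fixed point of the $\C^*$-action. Since the $\C^*$-orbit of $(\dbar_E,\Phi)$ lands inside a single gauge orbit, for each $\xi \in \C^*$ there is a gauge transformation $g(\xi) \in \Gg(E)$ with $(\dbar_E, \Phi)\cdot g(\xi) = (\dbar_E, \xi\Phi)$; i.e. $g(\xi)$ preserves $\dbar_E$ (so it is a holomorphic automorphism of $\Ecal = (E,\dbar_E)$) and conjugates $\Phi$ to $\xi\Phi$. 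The first step is to upgrade this to a genuine one-parameter subgroup: because the Higgs bundle is stable as a member of its $S$-equivalence class (polystable, and here we only need it up to the splitting into stable factors), the stabilizer of $(\dbar_E,\xi\Phi)$ in the complex gauge group is reductive and the association $\xi \mapsto g(\xi)$ can be chosen to be a holomorphic homomorphism $\C^* \to \Aut(\Ecal)$, after normalizing by the (finite, in the stable case central) ambiguity. This is the standard argument of Simpson; I would cite \cite[Sec.\ 7]{selfduality} and \cite{SimpsonVHS} for the existence of such a homomorphism rather than redo it.

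Given the holomorphic homomorphism $\rho: \C^* \to \Aut(\Ecal)$, the second step is to decompose $E$ into weight spaces. Since $\rho(\xi)$ acts holomorphically and $\Aut(\Ecal)$ sits inside $\sGL$ of the (polystable) bundle, $\rho$ is diagonalizable with integer weights, giving a holomorphic splitting $E = \bigoplus_{m} F_m$ where $\rho(\xi)$ acts on $F_m$ by $\xi^m$, and $\dbar_E$ is block-diagonal with respect to this splitting. The relation $\rho(\xi)^{-1}\Phi\rho(\xi) = \xi\Phi$ forces the block of $\Phi$ mapping $F_m \to F_{m'}\otimes K$ to satisfy $\xi^{m - m'}(\cdot) = \xi(\cdot)$, hence it vanishes unless $m' = m - 1$. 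So $\Phi$ lowers the weight by exactly one. Relabeling the nonzero weight spaces in decreasing order as $E_1, \dots, E_\ell$ (reindexing so consecutive labels differ by one weight; one checks intermediate weights with $\Phi$ nonzero cannot be skipped, or simply allows zero $\Phi_j$ blocks — but stability of the Higgs bundle actually rules out an internal direct-sum splitting, forcing the weights that appear to be consecutive) yields precisely the shape in \eqref{eq higgs field vhs}, with $\Phi_j : E_j \to E_{j+1}\otimes K$ holomorphic because $\Phi$ is $\dbar_E$-holomorphic and the splitting is holomorphic.

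For the converse, suppose $E = E_1 \oplus \cdots \oplus E_\ell$ with $\dbar_E$ and $\Phi$ of the stated form. Define $g(\xi) = \diag(\xi^{\ell}\cdot\Id_{E_1}, \xi^{\ell-1}\cdot\Id_{E_2}, \dots, \xi\cdot\Id_{E_\ell})$ (any arithmetic-progression choice of powers works; one may also normalize the total power to make $g(\xi) \in \Gg(E)$, i.e. act trivially on $\det E$, by shifting exponents). Then $g(\xi)$ is a holomorphic automorphism of $\Ecal$, so $g(\xi)^{-1}\dbar_E g(\xi) = \dbar_E$, and a direct computation on the subdiagonal blocks gives $g(\xi)^{-1}\Phi g(\xi) = \xi\Phi$, since the $(j{+}1,j)$ block gets multiplied by $\xi^{(\ell-j)}/\xi^{(\ell-j+1)} \cdot \xi^{?}$ — arranged to equal $\xi$. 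Hence $(\dbar_E,\Phi)\cdot g(\xi) = (\dbar_E,\xi\Phi)$, so $[(\dbar_E,\Phi)]$ is fixed by the $\C^*$-action.

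The main obstacle is the first step: producing the \emph{homomorphism} $\rho:\C^*\to\Aut(\Ecal)$ from the mere assumption that the orbit is pointwise fixed in the moduli space. The subtlety is that a priori $g(\xi)$ is only defined up to the stabilizer of $(\dbar_E,\xi\Phi)$ and only known to exist for each individual $\xi$; promoting this to a holomorphic one-parameter group requires either a rigidity/reductivity argument for the stabilizer of a polystable Higgs bundle or an explicit construction using the Jordan--H\"older filtration and the grading it induces. I would handle this by invoking Simpson's result directly, noting that in the stable case the stabilizer is just the center $\C^*$ (scalars), so the ambiguity is a central character of $\C^*$ which can be absorbed, making the choice of $\rho$ canonical up to the obvious normalization. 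The weight-space decomposition and the block-structure computation are then routine linear algebra with holomorphic bundles.
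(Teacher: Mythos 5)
The paper does not actually prove this proposition; it is stated as a known fact with a pointer to \cite[Sec.~7]{selfduality}, so there is no in-paper argument to compare with line by line. Your converse direction is fine and is exactly the computation the paper uses elsewhere (the gauge transformation $g_\xi$ of \eqref{eq fixed point hol gauge trans}); the only caveat is that forcing $\det g(\xi)=1$ may require fractional exponents, which is harmless because the conjugation action only depends on differences of exponents.

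For the forward direction your outline is the standard one, but the step you yourself flag --- promoting the pointwise gauge equivalences $g(\xi)$ to a holomorphic homomorphism $\rho:\C^*\to\Aut(\Ecal)$ --- is where all the content lies, and your justification for outsourcing it has two problems. First, the proposition is about arbitrary points of $\Mm_\rH$, i.e.\ polystable Higgs bundles; at a strictly polystable fixed point the stabilizer is a positive-dimensional reductive group, not a finite central one, so ``the ambiguity is a central character which can be absorbed'' does not apply as stated (and even for a stable $\sSL(n,\C)$-pair the stabilizer inside $\Gg(E)$ is the finite center $\mu_n$, not $\C^*$). Second, the sources you would cite do not argue this way: Hitchin's proof is infinitesimal (the vanishing of the generating vector field at a fixed point of the circle action yields an endomorphism $\psi$ with $\dbar_E\psi=0$ and $[\psi,\Phi]=i\Phi$, whose constant eigenvalues give the splitting), while Simpson avoids any one-parameter subgroup by taking a \emph{single} isomorphism $f$ with $f^{-1}\Phi f=t\Phi$ for one $t$ of infinite order and decomposing $E$ into generalized eigenbundles of $f$: the eigenvalues are constant since $X$ is compact, $\Phi$ sends the $\lambda$-eigenbundle to the $t\lambda$-eigenbundle tensored with $K$, and finiteness of the spectrum produces the grading. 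That last argument is the cheapest repair of your gap --- it needs no homomorphism, no reductivity discussion, and works verbatim for polystable fixed points, since closedness of polystable orbits already gives you the isomorphism $f$. As written, though, your step 1 is a citation to results that essentially \emph{are} the proposition, so the attempt is incomplete rather than wrong.
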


\begin{Example}  \label{ex:fuchsian}
The \emph{Fuchsian} or \emph{uniformizing} Higgs bundle is defined by setting
$E_j=K^{\frac{n+1}{2}-j}$, and (since $E_{j+1}\otimes K=E_j$) taking $\Phi_j=1$.
\end{Example}

The next result characterizes the flat connections associated to polystable Higgs bundles of the form \eqref{eq higgs field vhs}.
\begin{Proposition}\label{Prop flat conn of fixed point}
    If $(\dbar_E,\Phi)$ is a polystable Higgs bundle of the form \eqref{eq higgs field vhs}, then the splitting  is orthogonal with respect to the harmonic metric $h.$ In particular, the associated flat connection $D=\dbar_E+\partial_E^h+\Phi+\Phi^{\ast_h}$ is given by 
  \[\mtrx{\bar\p_{E_1}&\Phi_1^{\ast_h}\\&\ddots&\ddots\\&&\bar\p_{E_{\ell-1}}&\Phi_{\ell-1}^{\ast_h}\\&&&\bar\p_{E_\ell}}+\mtrx{\p_{E_1}^h\\\Phi_1&\p_{E_2}^h\\&\ddots&\ddots\\&&\Phi_{\ell-1}&\p_{E_l}^h},\]
    where $\Phi_j^{\ast_h}:E_{j+1}\to E_j\otimes \overline K$ is the adjoint of $\Phi_j$ with respect to the induced metrics on $E_j$ and $E_{j+1}$. 
\end{Proposition}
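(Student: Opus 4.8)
The plan is to first establish the orthogonality of the splitting $E = E_1 \oplus \cdots \oplus E_\ell$ with respect to the harmonic metric $h$, and then read off the shape of the flat connection $D$ directly from the Chern connection of the graded pieces. For orthogonality, I would use the uniqueness of the harmonic metric (from the $\NHC$, Theorem \ref{thm:nhc}) together with the $\C^*$-equivariance built into Proposition \ref{Prop: fixedpoints of C* action}. Concretely, since $[(\dbar_E,\Phi)]$ is a $\VHS$, there is a $\C^*$-family of gauge transformations $g(\xi)$ with $(\dbar_E,\Phi)\cdot g(\xi) = (\dbar_E,\xi\Phi)$; with respect to the splitting, $g(\xi)$ acts on $E_j$ by a scalar $\xi^{j}$ (up to an overall normalization), because conjugating the strictly lower-triangular $\Phi$ by $\diag(\xi^{a_1},\dots,\xi^{a_\ell})$ multiplies each $\Phi_j$ by $\xi^{a_j - a_{j+1}}$, and matching this to multiplication by $\xi$ forces $a_j - a_{j+1} = 1$ for all $j$. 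Thus the splitting is exactly the weight decomposition for a $\C^*$-action realized by \emph{unitary} gauge transformations once we pass to $|\xi|=1$: for $|\xi|=1$, $g(\xi)$ preserves the harmonic metric $h_\xi$ of $(\dbar_E,\xi\Phi)$, but $(\dbar_E,\xi\Phi)$ and $(\dbar_E,\Phi)$ are gauge-equivalent via $g(\xi)$, so by uniqueness $h_\xi = g(\xi)^* h\, g(\xi) = h$ (the metric is $S^1$-invariant). An $S^1$-invariant hermitian metric must make the distinct weight spaces $E_j$ pairwise orthogonal, which is the first claim.

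Granting orthogonality, the second claim is essentially bookkeeping. Writing $h = h_1 \oplus \cdots \oplus h_\ell$ block-diagonally, the Chern connection $\dbar_E + \partial_E^h$ is block diagonal with entries $\dbar_{E_j} + \partial_{E_j}^{h_j}$, since the Chern connection of an orthogonal direct sum is the direct sum of the Chern connections. The Higgs field $\Phi$ is strictly lower triangular by \eqref{eq higgs field vhs}, with $\Phi_j : E_j \to E_{j+1}\otimes K$ in the $(j+1,j)$ block. Its adjoint $\Phi^{\ast_h}$ is then strictly upper triangular, with the adjoint $\Phi_j^{\ast_h} : E_{j+1} \to E_j \otimes \overline{K}$ computed blockwise using $h_j$ and $h_{j+1}$ precisely because the splitting is $h$-orthogonal (so there are no cross terms). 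Assembling $D = \dbar_E + \partial_E^h + \Phi + \Phi^{\ast_h}$ and sorting into the anti-holomorphic part ($\dbar_{E_j}$ on the diagonal, $\Phi_j^{\ast_h}$ on the superdiagonal) and the holomorphic part ($\partial_{E_j}^{h}$ on the diagonal, $\Phi_j$ on the subdiagonal) yields exactly the displayed matrix.

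The main obstacle is the orthogonality statement, and specifically making rigorous the claim that the harmonic metric is $S^1$-invariant and hence block-diagonal. One must be careful that $g(\xi)$ need not literally be diagonal as a gauge transformation — it is only conjugate to a diagonal one — so the cleanest route is: (i) observe $h$ and $g(\xi)^* h\, g(\xi)$ both solve the Hitchin equation \eqref{eqn:hitchin} for $(\dbar_E,\Phi)$ when $|\xi|=1$ (using that $g(\xi)$ intertwines $(\dbar_E,\Phi)$ with $(\dbar_E,\xi\Phi)$ and that the equation is invariant under $\Phi \mapsto \xi\Phi$ for $|\xi|=1$); (ii) invoke uniqueness of the harmonic metric up to the (finite, since stable-summands are simple) automorphisms to conclude $h$ is preserved by the $S^1$-action on cohomology classes; (iii) deduce that the infinitesimal generator of this $S^1$-action is an $h$-skew-hermitian endomorphism whose eigenspace decomposition is forced to coincide with the weight decomposition $\bigoplus E_j$, hence the $E_j$ are $h$-orthogonal. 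An alternative, perhaps more robust, argument avoids uniqueness entirely: one shows directly that the orthogonal complement construction is compatible with the strictly-triangular form of $\Phi$ — if $h'$ is the metric obtained by Gram–Schmidt orthogonalizing the filtration $0 \subset E_1 \subset E_1\oplus E_2 \subset \cdots$, then $h'$ still solves \eqref{eqn:hitchin} because the curvature and $[\Phi,\Phi^{\ast}]$ terms respect the grading, so $h' = h$ by uniqueness. Either way, once orthogonality is in hand the rest is immediate. This is precisely the content sketched in \cite[Sec.\ 7]{selfduality}, to which one could also simply refer.
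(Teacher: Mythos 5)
The paper offers no proof of this proposition at all: it is recorded as a standard consequence of Hitchin's analysis of the circle-action fixed points (cf.\ the citation of \cite[Sec.~7]{selfduality} just before Proposition \ref{Prop: fixedpoints of C* action}), so there is nothing to compare with line by line. Your main route is exactly that standard argument, and it is correct in the stable case: the diagonal family $g(\xi)=\diag(\xi^{a_1}\Ibold_{E_1},\dots,\xi^{a_\ell}\Ibold_{E_\ell})$ with $a_j-a_{j+1}=1$ intertwines $(\dbar_E,\Phi)$ with $(\dbar_E,\xi\Phi)$; for $|\xi|=1$ equation \eqref{eqn:hitchin} is unchanged under $\Phi\mapsto\xi\Phi$, so both $h$ and the $g(\xi)$-pullback of $h$ are harmonic metrics for the same stable Higgs bundle with the same induced metric on $\det E$, and uniqueness forces $g(\xi)$ to be $h$-unitary; its generator is then $h$-skew-hermitian with distinct eigenvalues on the blocks $E_j$ (which are literally its eigenspaces, since you chose $g(\xi)$ diagonal in the given splitting), so the $E_j$ are pairwise $h$-orthogonal. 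The second half of your write-up (block-diagonal Chern connection, blockwise adjoint of the strictly triangular $\Phi$) is routine and fine.

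Two caveats. First, the proposition is stated for polystable Higgs bundles, and your justification of the uniqueness step --- ``up to the (finite, since stable-summands are simple) automorphisms'' --- is wrong in the strictly polystable case: a direct sum of non-isomorphic stable summands already has a positive-dimensional automorphism group (a torus of block scalars), and the harmonic metric is then only unique up to rescaling the summands. In that case the argument only yields that the pullback of $h$ equals $h\cdot a(\xi)$ for a positive automorphism $a(\xi)$, and an extra step is needed (e.g.\ decompose into stable summands compatibly with the grading and argue summand by summand, or produce an $S^1$-invariant harmonic metric by a fixed-point argument in the space of harmonic metrics). Since the paper only ever invokes the proposition at stable points of $\VHS_\alpha$, this is a minor gap, but the parenthetical as written is not a proof. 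Second, the ``alternative, perhaps more robust'' Gram--Schmidt argument is not an argument: if $h'$ is obtained by orthogonalizing the filtration, then $F_{(\dbar_E,h')}$ differs from $F_{(\dbar_E,h)}$, and the assertion that $h'$ still solves \eqref{eqn:hitchin} ``because the curvature and $[\Phi,\Phi^{\ast}]$ terms respect the grading'' assumes precisely what is to be proved. Delete that aside or justify it; your main argument does not need it.
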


The splitting of $E$ in Proposition \eqref{Prop: fixedpoints of C* action} gives a $\Z$-grading on $\End(E)$, where  
\begin{equation}
    \label{eq End decomp}
    \End_j(E):=\bigoplus\limits_{i-k=j}\Hom(E_i,E_k)~,
\end{equation}  
for $1-\ell\leq j\leq \ell-1$, and we set $\End_j(E)=\{0\}$ otherwise.
We also introduce the following notation:
\begin{equation}
    \label{eq N+ notation}
    \xymatrix@=.5em{N_+=\bigoplus\limits_{j>0} \End_j(E)~,& 
 N_-=\bigoplus\limits_{j<0} \End_j(E) &\text{and}&
 L=\End_0(E)\cap \slfrak(E)~.}
\end{equation}

\subsection{$\lambda$-connections}
Since $\Mm_{\rdR}$ is hyperk\"ahler, its twistor space $\Mm_{\rdR}\times \PBbb^1$ is a complex manifold. The restriction to $\Mm_{\rdR}\times\CBbb$  can equivalently be described as the moduli space of $\lambda$-connections, which we briefly describe here (see \cite{SimpsonHodgeFiltration} for details).
\begin{Definition}\label{Def: lambda connection}
     A \emph{$\lambda$-connection} on $X$ is a triple $(\lambda,\bar\p_E,\nabla_\lambda)$, where  $\lambda\in\C$, $\dbar_E$ is a $\dbar$-operator on $E$, and $\nabla_\lambda:\Omega^0(E)\to \Omega^{1,0}(E)$ is a differential operator satisfying
    \begin{itemize}
         \item a $\lambda$-scaled Leibniz rule:
    \[\nabla_\lambda(fs)=\lambda\cdot \partial f\otimes s+f\cdot \nabla_\lambda s,\] for all smooth functions
    $f$ and $s\in \Omega^0(E),$ and 
    \item  compatibility with the holomorphic structure: $[\bar\partial_E,\nabla_\lambda]=0.$ 
    \end{itemize} 
\end{Definition}
Thus, when $\lambda=1$, a $\lambda$-connection  is just a holomorphic connection $\nabla: \Ecal\to \Ecal\otimes K$ on the holomorphic bundle $\Ecal=(E,\dbar_E)$ defined by $\dbar_E$, which is equivalent to a flat connection  $D=\dbar_E+\nabla$ on $E$. When $\lambda=0$, a $\lambda$-connection is an endomorphism $\Phi : \Ecal\to \Ecal\otimes K$, and the data $(\dbar_E, \Phi)$ gives a Higgs bundle.
Isomorphisms of $\lambda$-connections are defined via complex gauge transformations just as with holomorphic connections.
The moduli space of (polystable) $\lambda$-connections $[(\lambda,\bar\p_E, \nabla_\lambda)]$ is denoted by $\Mm_\Hod$. 
The tautological  map 
$$
\Mm_{\Hod}\lra \CBbb : [(\lambda,\bar\p_E,\nabla_\lambda)]\mapsto \lambda
$$
will (somewhat abusively) be called $\lambda$.
 From the comments above, $\lambda^{-1}(0)=\Mm_{\rH}$ and $\lambda^{-1}(1)=\Mm_{\rdR}$.

There is a $\C^*$-action on $\Mm_{\Hod}$  given by
\begin{equation}\label{eq c^* action on Hod}
    \xi\cdot [(\lambda,\bar\p_E,\nabla_\lambda)]=[(\xi\lambda,\bar\p_E,\xi \nabla_\lambda)]~.
\end{equation}
The only fiber preserved by this action is $\lambda^{-1}(0)$, and the restriction to $\lambda^{-1}(0)$
 is the  $\C^*$-action on Higgs bundles of the previous section. Hence, the  fixed points on $\Mm_{\Hod}$ are exactly the locus of complex variations of Hodge structures. 

In \cite{SimpsonHodgeFiltration}, Simpson shows that the limit as $\xi\to0$ in \eqref{eq c^* action on Hod} defines a 
Bia{\l}ynicki-Birula-type stratification of $\Mm_{\Hod}$.
If $\coprod\limits_{\alpha}\VHS_\alpha$ is the decomposition of $\VHS$ into connected components, then define 
\begin{equation*}
    \label{eq N alpha strata}
    \rW_\alpha=\{[(\lambda,\bar\p_E, \nabla_\lambda)]\in\Mm_\Hod\ |\ \lim\limits_{\xi\to0}[(\xi\lambda,\bar\p_E,\xi \nabla_\lambda)]\in \VHS_\alpha\}~.
\end{equation*}
Each $\rW_\alpha$ is connected and foliated by $\C^*$-orbits, and $\Mm_\Hod=\coprod\limits_{\alpha}\rW_\alpha$. The restrictions $\rW^0_\alpha=\rW_\alpha\cap \lambda^{-1}(0)$ and $\rW^1_\alpha=\rW_\alpha\cap \lambda^{-1}(1)$
 define  stratifications of $\Mm_\rH$ and $\Mm_\rdR$ respectively:
\[\xymatrix{\Mm_\rH=\coprod\limits_\alpha \rW_\alpha^0&\text{and}&\Mm_\rdR=\coprod\limits_\alpha \rW_\alpha^1}~.\]
\begin{Remark}
    The open stratum $\rW_{\mathrm{open}}$ fibers over the component of $\VHS$ associated to the moduli space of polystable bundles. Namely, 
    \[\lim\limits_{\xi\to 0}[(\xi\lambda,\bar\p_E,\xi \nabla_\lambda)]=[(\bar\p_E,0)]\]
    if and only if the holomorphic bundle $(E,\bar\p_E)$ is polystable. In this case, the open stratum $\rW^0_{\mathrm{open}}\subset\Mm_\rH$ is the cotangent sheaf of the moduli space of polystable bundles. 
\end{Remark}
The map $\Tcal$ from \eqref{eq NAH intro} actually
 produces a family of flat connections 
\[\dbar_E+\partial_E^h+\xi^{-1}\Phi+\xi\Phi^{*_h}\]
parametrized by $\xi\in \CBbb^\ast$. 
If we furthermore use the $\C^*$-action \eqref{eq c^* action on Hod} on $\lambda$-connections we obtain
\begin{equation}
    \label{eq twistor line}
    \xi\cdot[(1,\dbar_E+\xi\Phi^{*_h},\partial_E+\xi^{-1}\Phi)]=[(\xi,\dbar_E+\xi\Phi^{*_h},\xi\partial_E+\Phi)]~.
\end{equation}
Moreover, 
\[\lim\limits_{\xi\to0}[(\xi,\dbar_E+\xi\Phi^{*_h},\xi\partial_E^h+\Phi)]=[(\dbar_E,\Phi)]~.\] 

The extension of the $\C^*$-family \eqref{eq twistor line} to $\C$ is an example of a {\em real twistor line} in $\Mm_{\Hod}.$ Thus, the above real twistor line through a Higgs bundle $[(\bar\p_E,\Phi)]$ is a section of $\Mm_\Hod$ which interpolates between the Higgs bundle and the flat connection $\Tt(\bar\p_E,\Phi)$ from the nonabelian Hodge correspondence.

The real twistor line through a Higgs bundle $[(\bar\p_E,\Phi)]$ and the $\C^*$-orbit through $\Tt(\bar\p_E,\Phi)$ coincide if and only if $(\dbar_E,\Phi)$ is a $\VHS$. In particular, 
$$\Tt(\bar\p_E,\Phi)\in \rW^1_\alpha(\bar\p_E,\Phi)\ \text{ if }\ [(\bar\p_E,\Phi)]\in\VHS_\alpha\ .$$
However, the nonabelian Hodge correspondence does not preserve the stratification in general. That is, $\Tt$ does not map $\rW^0_\alpha$ to $\rW^1_\alpha$.
\subsection{The Hitchin section and opers}\label{sec: HitchinSection and Opers}
We now recall the explicit parameterizations of the Hitchin section and the space of opers by the vector space $\bigoplus\limits_{j=1}^{n-1}H^0(K^{j+1})$.

The starting point is the uniformizing rank two $\VHS$ from Example \ref{ex:fuchsian} 
\begin{equation}
    \label{eq fuchsian sl2}\left(K^\haf\oplus K^{-\haf},\mtrx{0&0\\1&0}\right)~.
\end{equation}
Taking the $(n-1)^{\mathrm{st}}$-symmetric product, renormalizing and twisting by an $n^{\mathrm{th}}$-root of the the trivial bundle defines the following $\VHS$
\begin{equation}
    \label{eq VHS oper hitchin section}(E,\bar\p_0,\Phi_0)=\left(L\oplus (L\otimes K^{-1})\oplus\cdots\oplus (L\otimes K^{1-n})~,~\mtrx{0&\\1&0\\&\ddots&\ddots\\&&1&0}\right)~,
\end{equation}
where $L^n\otimes K^{\frac{n(n-1)}{2}}=\Oo.$ Note that the set of such $\VHS$ is finite of cardinality $n^{2g}.$ These are called  \emph{Fuchsian points}, since they are obtained from the Higgs bundle \eqref{eq fuchsian sl2} whose associated flat connection corresponds to the Fuchsian representation that uniformizes the Riemann surface $X$ (cf. \cite{selfduality}). 

By Proposition \ref{Prop flat conn of fixed point}, the flat connection $\Tt(\bar\p_0,\Phi_0)=\dbar_0+\Phi_0^{\ast_h}+\p_0^h+\Phi_0$ is given by 
    \[\mtrx{\bar\p_{L}&1^{*_h}\\&\ddots&\ddots\\&&\bar\p_{L\otimes K^{n-2}}&{1^{*_h}}\\&&&\bar\p_{L\otimes K^{n-1}}}+\mtrx{\p_{L}^h\\1&\p_{L\otimes K^{-1}}^h\\&\ddots&\ddots\\&&1&\p_{L\otimes K^{n-1}}^h},\]

As in \eqref{eq End decomp}, the holomorphic splitting of $E$ gives a decomposition $\bigoplus\limits_{j=1-n}^{n-1}\End_j(E)$ of the endomorphism bundle. For the Higgs bundle \eqref{eq VHS oper hitchin section} we have 
\[\End_j(E)\cong \underbrace{K^{j}\oplus\cdots\oplus K^j}_{n-|j|}~.\]
With respect to this splitting we have 
\[\ad_{\Phi_0^{*_h}}:\End_j(E)\otimes K\to\End_{j+1}(E)\otimes K \bar K~.\] Moreover, $\ker(\ad_{\Phi_0^{*_h}})\simeq K^{j+1}$ for $1\leq j \leq n-1.$
Set 
\begin{equation*}
    \label{eq kernel of raising}V_{j}\otimes K = \ker(\ad_{\Phi_0^{*_h}})\simeq K^{j+1}\hookrightarrow\End_{j}(E)\otimes K~.
\end{equation*} 

With the above notation and a fixed choice of $L,$ consider the maps 
\begin{align}
\begin{split} \label{eqs oper Hitchin param}
p_{\mathrm{Hit}} : \bigoplus\limits_{j=1}^{n-1} H^0(K^{j+1}) &\lra \Mm_{\rH}:  \\
&
(q_2,\cdots,q_n)\mapsto [(\dbar_0, \Phi_0+q_2+\cdots+q_n)] \ ,\\
p_{\mathrm{oper}} : \bigoplus\limits_{j=1}^{n-1} H^0(K^{j+1}) &\lra \Mm_{\rdR} : \\
& (q_2,\cdots,q_n)\mapsto [(\dbar_0+\Phi^{*_h}_0,\p_0^h+ \Phi_0+q_2+\cdots+q_n)]  \ ,
\end{split}
\end{align}
where for $1\leq j\leq n-1$ and $q_{j+1}\in H^0(V_j\otimes K)\subset H^0(\End_j(E)\otimes K)$. 
 
   The following theorem was proven in \cite{liegroupsteichmuller} for $p_\mathrm{Hit}$ and in \cite{BeilinsonDrinfeldOPERS} for $p_{\mathrm{oper}}.$  
\begin{Theorem}
    Let $[(\bar\p_0,\Phi_0)]\in\VHS$ be given by \eqref{eq VHS oper hitchin section} and let $p_\mathrm{Hit}$ and $p_\mathrm{oper}$ be as in \eqref{eqs oper Hitchin param}. 
    \begin{enumerate}
        \item The map $p_\mathrm{Hit}$ is a holomorphic embedding onto $\rW^0(\bar\p_0,\Phi_0)$, which is closed in $\Mm_\rH.$
        \item The map $p_\mathrm{oper}$ is a holomorphic embedding onto $\rW^1(\bar\p_0,\Phi_0)$, which is closed in $\Mm_\rdR.$
    \end{enumerate}
\end{Theorem}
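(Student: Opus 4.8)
The plan is, for each of $p_{\mathrm{Hit}}$ and $p_{\mathrm{oper}}$, to verify three things in turn: that the pairs produced are stable Higgs bundles (resp.\ irreducible flat connections), so the maps take values in $\Mm_\rH$ (resp.\ $\Mm_\rdR$); that the maps are injective holomorphic immersions with closed image, hence holomorphic embeddings onto closed submanifolds; and that the image equals the stratum $\rW^0(\bar\p_0,\Phi_0)$ (resp.\ $\rW^1(\bar\p_0,\Phi_0)$). I will spell out the Higgs bundle case and then indicate the (entirely parallel) de Rham argument.

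\emph{Stability, injectivity, immersion.} Write $\Phi=\Phi_0+q_2+\cdots+q_n$ with $q_{j+1}\in H^0(\End_j(E)\otimes K)$. Stability rests on the cyclic structure of $\Phi$: with $G_k:=E_1\oplus\cdots\oplus E_k$, one has $\Phi(G_k)\subseteq G_{k+1}\otimes K$ because $\Phi_0$ sends $E_i$ into $E_{i+1}\otimes K$ while each $q_{j+1}$, of $\End(E)$-degree $j\geq 1$, sends $E_i$ into $E_{i-j}\otimes K$. Hence any $\Phi$-invariant subbundle $\Fcal$ of rank $r$ inherits a filtration $\widetilde\Fcal_k$ (saturations of $\Fcal\cap G_k$) on whose graded pieces $\Phi$ acts through the restriction of the \emph{isomorphism} $\Phi_0|_{E_k}\colon E_k\xrightarrow{\sim}E_{k+1}\otimes K$; injectivity of this map forces the nonzero graded pieces to form a final segment $\{n-r+1,\dots,n\}$ with $\mathrm{gr}_k(\Fcal)\hookrightarrow\mathrm{gr}_{k+1}(\Fcal)\otimes K$, and a short degree estimate (telescoping these inclusions down to $\mathrm{gr}_n(\Fcal)\hookrightarrow E_n$) gives $\deg\Fcal<0$. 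Thus $(\bar\p_0,\Phi)$ is stable; note also that $(\bar\p_0,\Phi_0)$ is itself stable, so the stratum $\rW^0(\bar\p_0,\Phi_0)$, being the locus of Higgs bundles with $\C^*$-limit the indecomposable $(\bar\p_0,\Phi_0)$, consists of stable Higgs bundles and hence lies in the smooth locus of $\Mm_\rH$. Injectivity of $p_{\mathrm{Hit}}$ is a rigidity of the companion normal form: a complex gauge transformation intertwining $(\bar\p_0,\Phi_0+q)$ with $(\bar\p_0,\Phi_0+q')$ is necessarily $\bar\p_0$-holomorphic, hence block upper triangular with scalar diagonal since $H^0(\Hom(E_i,E_k))=0$ for $i<k$, and matching graded components of the conjugated Higgs field with $\Phi_0$ first pins the diagonal to a central $\zeta\cdot\Id$, $\zeta^n=1$, and then forces $q'=q$. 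Holomorphicity of $p_{\mathrm{Hit}}$ is clear, and $dp_{\mathrm{Hit}}$, which sends $(q_2,\dots,q_n)$ to the class of $(0,q_2+\cdots+q_n)$, is injective because each $q_{j+1}$ lies in $H^0(\ker\ad_{\Phi_0^{*_h}})$, so this pair is a harmonic representative in the deformation cohomology at $(\bar\p_0,\Phi_0)$ and is therefore nonzero unless every $q_{j+1}$ vanishes.

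\emph{The image is the stratum.} First, $\lim_{\xi\to 0}\xi\cdot[(\bar\p_0,\Phi_0+q_2+\cdots+q_n)]=[(\bar\p_0,\Phi_0)]$: conjugating $(\bar\p_0,\xi\Phi_0+\sum_j\xi q_{j+1})$ by the diagonal gauge transformation in $\End_0(E)$ that scales $E_k$ by the appropriate power of $\xi$ multiplies a component of $\End(E)$-degree $m$ by $\xi^{m}$, so it restores $\xi\Phi_0$ to $\Phi_0$ and sends $\xi q_{j+1}\mapsto\xi^{\,j+1}q_{j+1}\to 0$. Hence $p_{\mathrm{Hit}}$ maps into $\rW^0(\bar\p_0,\Phi_0)$. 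Moreover $p_{\mathrm{Hit}}$ is a continuous section of the proper Hitchin fibration \eqref{eqn:hitchin-fibration}, so its image is closed in the Hausdorff space $\Mm_\rH$ and the Hitchin map provides a continuous inverse; combined with the preceding paragraph, $p_{\mathrm{Hit}}$ is a holomorphic embedding onto a closed submanifold of $\rW^0(\bar\p_0,\Phi_0)$. To see it is all of the stratum, observe that by Bia{\l}ynicki-Birula theory the tangent space to $\rW^0(\bar\p_0,\Phi_0)$ at the fixed point is the sum of the positive-weight parts of the first hypercohomology of $[\,\fsl(E)\xrightarrow{\ad_{\Phi_0}}\fsl(E)\otimes K\,]$; decomposing by the $\Z$-grading \eqref{eq End decomp} and using $\ker(\ad_{\Phi_0^{*_h}})\cong K^{j+1}$ on the degree-$j$ summand identifies this with $\bigoplus_{j=1}^{n-1}H^0(K^{j+1})$, the same finite-dimensional space that is the domain of $p_{\mathrm{Hit}}$. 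Since $p_{\mathrm{Hit}}$ is an injective immersion with closed image into the connected manifold $\rW^0(\bar\p_0,\Phi_0)$ and the dimensions agree, the image is open as well as closed, hence everything. (One can also argue directly: a Higgs bundle with $\C^*$-limit $(\bar\p_0,\Phi_0)$ carries a filtration with associated graded $(\bar\p_0,\Phi_0)$, and the subdiagonal isomorphisms of $\Phi_0$ let one holomorphically split the filtration so that $\Phi$ takes the companion form, exhibiting the $q_{j+1}$.)

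\emph{The de Rham case, and the main obstacle.} For $p_{\mathrm{oper}}$ the argument is the same with $\Mm_\rdR$ in place of $\Mm_\rH$ and the $\lambda$-connection picture of $\rW^1$: flatness of $\bar\p_0+\Phi_0^{*_h}+\partial_0^h+\Phi_0+q$ follows from Proposition \ref{Prop flat conn of fixed point} together with $\bar\p_0q=0$ and $\ad_{\Phi_0^{*_h}}q=0$ (and the vanishing of $(2,0)$-forms on a curve); irreducibility, from the cyclic structure of the $(1,0)$-part; injectivity, from the same normal-form rigidity; and the $\C^*$-limit $\lim_{\xi\to 0}\xi\cdot[(1,\bar\p_0+\Phi_0^{*_h},\partial_0^h+\Phi_0+q)]=[(\bar\p_0,\Phi_0)]$, from the analogous diagonal rescaling. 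Surjectivity onto $\rW^1(\bar\p_0,\Phi_0)$ then amounts to identifying Simpson's $\lambda=1$ Bia{\l}ynicki-Birula stratum over the Fuchsian $\VHS$ with the classical space of $\sSL(n,\C)$-opers --- a torsor over $\bigoplus_{j=2}^n H^0(K^j)$ --- together with the closedness of the oper locus in $\Mm_\rdR$ \cite{BeilinsonDrinfeldOPERS,SimpsonDeRhamStrata}. I expect the genuine obstacle to lie precisely in these surjectivity statements: on the Higgs side, the weight computation showing $\dim\rW^0(\bar\p_0,\Phi_0)=\sum_{j}h^0(K^{j+1})$ (equivalently, that the stratum has no directions beyond the $q_{j+1}$), and on the de Rham side, matching the Bia{\l}ynicki-Birula stratum with the Beilinson--Drinfeld description of opers; everything else reduces to the cyclic-structure bookkeeping above.
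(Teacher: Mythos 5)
The first thing to say is that the paper offers no proof of this statement to compare against: it is quoted with attributions, \cite{liegroupsteichmuller} for $p_{\mathrm{Hit}}$ and \cite{BeilinsonDrinfeldOPERS} for $p_{\mathrm{oper}}$. The closest internal route is the later slice machinery: Example \ref{ex:fuchsian-slice} identifies $p_{\mathrm{Hit}}$ and $p_{\mathrm{oper}}$ with the restrictions of $p_\rH$ and $p_\rdR$ to the $\BB$-slice at the Fuchsian point, and Corollaries \ref{cor:ph} and \ref{cor:pdr} (via Propositions \ref{prop stable man of VHS normal form}, \ref{prop:good-gauge}, \ref{prop:dr-normal-form}, \ref{prop:good-gauge2}) then give the biholomorphisms onto $\rW^0(\bar\p_0,\Phi_0)$ and $\rW^1(\bar\p_0,\Phi_0)$. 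Measured against that, your Higgs-bundle half is essentially Hitchin's original argument and is sound in outline: the cyclic-filtration stability estimate, injectivity by rigidity of the companion form, closedness from properness of the Hitchin fibration, and surjectivity by dimension count plus open-and-closed in the connected manifold $\rW^0(\bar\p_0,\Phi_0)$. Two small repairs are needed there: the claim that $p_{\mathrm{Hit}}$ is a section of \eqref{eqn:hitchin-fibration} requires Kostant's slice theorem (the invariants of $\Phi_0+q$ are only a triangular polynomial automorphism of the $q_j$'s, which still suffices for the closedness argument), and you verify injectivity of $dp_{\mathrm{Hit}}$ only at the origin --- though since you have global injectivity and equality of dimensions (Lemmas \ref{lem:dimension} and \ref{Lemma dim of stable man}, or your weight computation), invariance of domain already gives openness, so this is repairable; smoothness and the dimension of $\rW^0(\bar\p_0,\Phi_0)$ on the possibly singular $\Mm_\rH$ do need the stability of the fixed point, as in Lemma \ref{Lemma dim of stable man}.

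The genuine gap is the de Rham half, and you have flagged it yourself: surjectivity of $p_{\mathrm{oper}}$ onto $\rW^1(\bar\p_0,\Phi_0)$ and closedness of its image in $\Mm_\rdR$ are not argued but deferred to \cite{BeilinsonDrinfeldOPERS} and \cite{SimpsonDeRhamStrata}, i.e.\ to the very results part (2) is recording. None of your Higgs-side tools transfer: there is no proper fibration of $\Mm_\rdR$ to give closedness, and the open-and-closed argument needs $\rW^1(\bar\p_0,\Phi_0)$ to be a smooth connected manifold of dimension $\sum_j h^0(K^{j+1})$, which at $\lambda=1$ is not a formal consequence of Bia{\l}ynicki-Birula theory but is exactly Simpson's theorem. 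Concretely, what is missing is: (i) that every $[(\Ecal,\nabla)]$ whose $\xi\to 0$ limit is the Fuchsian point can be gauged so that $\bar\p_E-\bar\p_0-\Phi_0^{*_h}\in\Omega^{0,1}(N_+)$ and $\nabla-\p_0^h-\Phi_0\in\Omega^{1,0}(L\oplus N_+)$ --- this is Proposition \ref{prop:dr-normal-form}, whose proof rests on the existence and uniqueness of a Griffiths transverse filtration with semistable associated graded (Theorem \ref{THM Simpson transverse filtration}) together with Lemma \ref{Lem identifying limit}; and (ii) the further unipotent gauge fixing into the slice, with uniqueness, which is Proposition \ref{prop:good-gauge2} and is proved by the same mod-$j$ induction you invoke on the Higgs side. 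Closedness of the oper locus likewise needs its own argument (for instance semicontinuity of the Harder--Narasimhan type: the oper bundle type is the extremal one allowed for bundles carrying holomorphic connections, so the locus where it is attained is closed), not a citation. So as a self-contained proof the proposal essentially establishes part (1), but reduces part (2) to the literature precisely at its substantive claims.
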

\begin{Remark}
  For each choice of line bundle $L$, the image of $p_\mathrm{Hit}$ is called a \emph{Hitchin component} and the image of the map $p_\mathrm{oper}$ is called a connected component of the space of \emph{opers}. 
\end{Remark}

\section{Deformation theory}
\label{sec: def theory section}

\subsection{The Hodge slice} \label{sec:hodge-slice}
In this section we describe local Kuranishi models for $\Mm_{\rdR}$ and $\Mm_{\rH}$. Let $(\dbar_E, \Phi)$ be a polystable Higgs bundle with harmonic metric $h$ and Chern connection $\dbar_E+\partial_E^h$. Thus, the connection $D=\bar\p_E+\p_E^h+\Phi^{\ast_h}+\Phi$ is flat. Since $h$ will be fixed throughout this section we omit it from the notation.

Set
\begin{equation*} \label{eqn:dprime}
\xymatrix{D=D'+D''\ ,& D'' := \dbar_E+\Phi&\text{and}& D':=\partial_E+\Phi^\ast~.}
\end{equation*}
A key fact is that these operators satisfy the usual K\"ahler identities:
\begin{equation} \label{eqn:kahler}
\xymatrix{(D'')^\ast=-i[\Lambda, D']&\text{and}& (D')^\ast=+i[\Lambda, D'']~ .}
\end{equation}
The \emph{deformation complex} is then given by:
\begin{equation} \label{eqn:complex1}
C(\dbar_E, \Phi) :  \Omega^0(\slfrak(E))\stackrel{D''}{\lra} \Omega^{0,1}(\slfrak(E))\oplus \Omega^{1,0}(\slfrak(E)) \stackrel{D''}{\lra}\Omega^{1,1}(\slfrak(E))\ .
\end{equation}
By \eqref{eqn:kahler}, $\ker (D'')^\ast=\ker D'$, so the harmonic representation of $H^1(C(\dbar_E, \Phi))$ is
\begin{align}
\begin{split} \label{eqn:H1}
\Hcal^1(\dbar_E,\Phi):=\bigl\{ (\beta,\varphi)\in \Omega^{0,1}(\slfrak(E))&\oplus \Omega^{1,0}(\slfrak(E)) \mid \\
& D''(\beta,\varphi)=0\ ,\ 
D'(\beta,\varphi)=0\bigr\}~.
\end{split}
\end{align}
At a stable Higgs bundle $(\dbar_E,\Phi)$, we have $H^i(C(\dbar_E, \Phi))=\{0\}$, $i=0,2$, and 
$$
T_{[(\dbar_E,\Phi)]}\Mm_{\rH}\simeq \Hcal^1(\dbar_E,\Phi)\ .
$$
Note that:
$$
D^\ast=(D')^\ast+(D'')^\ast=-i\Lambda D^c\ ,
$$
where $D^c=D''-D'$. Via the identification $\mu=\beta+\varphi$, we have also
$$
\Hcal^1(\dbar_E,\Phi)=\left\{ \mu \in \Omega^1(\slfrak(E)) \mid D\mu=0\ ,\ D^\ast\mu=0\right\}\ ,
$$
which for irreducible $D$ also gives the identification:
$$
T_{[D]}\Mm_{\rdR}\simeq \Hcal^1(\dbar_E,\Phi)\ .
$$

\begin{Definition} \label{def:slice}
The \emph{Hodge slice} (or simply \emph{slice}) at $(\dbar_E,\Phi)\in \Hscr$ is  given by 
\begin{align*}
\Scal_{(\dbar_E,\Phi)}=\bigl\{
(\beta,\varphi)\in 
\Omega^{0,1}(\slfrak(E))&\oplus \Omega^{1,0}(\slfrak(E)) \mid\\
&\qquad  D''(\beta,\varphi)+[\beta,\varphi]=0\ ,\ 
D'(\beta,\varphi)=0
\bigr\}\ .
\end{align*}

\end{Definition}
Notice that  $\Scal_{(\dbar_E,\Phi)}\subset \Hscr$ under the identification $(\beta,\varphi)\mapsto (\dbar_E+\beta, \Phi+\varphi)$.
 Moreover, at points where $\Hscr$ is smooth and $\Scal_{(\dbar_E,\Phi)}$ is a submanifold, then $\Scal_{(\dbar_E,\Phi)}$ is a holomorphic submanifold. By direct computation we have the following:
\begin{Lemma} \label{lem:slice}
The map $(\beta, \varphi)\mapsto D+\beta+\varphi$ defines a locally closed, holomorphic embedding $\Scal_{(\dbar_E,\Phi)}\hookrightarrow \Dscr$.
\end{Lemma}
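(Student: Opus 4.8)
The plan is to show that the map $\iota:(\beta,\varphi)\mapsto D+\beta+\varphi$ lands in $\Dscr$, is injective, is an immersion (indeed has everywhere-injective derivative), and is a local homeomorphism onto its image in the subspace topology, so that it is a locally closed embedding; holomorphicity will be automatic since $\Scal_{(\dbar_E,\Phi)}$ is cut out by the $I$-holomorphic equations $D''(\beta,\varphi)+[\beta,\varphi]=0$ and $D'(\beta,\varphi)=0$ and $\iota$ is complex-linear in $(\beta,\varphi)$. First I would verify that $\iota$ maps into $\Dscr$: writing $\mu=\beta+\varphi\in\Omega^1(\slfrak(E))$, the flatness of $D+\mu$ is equivalent to $D\mu+\tfrac12[\mu,\mu]=0$, and decomposing into Hodge types using $D=D'+D''$ one checks that this single equation splits as $D''\beta+\tfrac12[\beta,\beta]=0$ on $\Omega^{1,1}$ together with $D''\varphi+D'\beta+[\beta,\varphi]=0$ and $D'\varphi+\tfrac12[\varphi,\varphi]=0$; the two defining equations of $\Scal_{(\dbar_E,\Phi)}$, namely $D''(\beta,\varphi)+[\beta,\varphi]=\bigl(D''\beta+\tfrac12[\beta,\beta]\bigr)+\bigl(D''\varphi+D'\beta+[\beta,\varphi]\bigr)=0$ (the two summands lying in different bidegrees, hence vanishing separately) and $D'(\beta,\varphi)=D'\varphi+\tfrac12[\varphi,\varphi]=0$ (again using the bidegree decomposition of $D'(\beta+\varphi)$), together give exactly these three conditions. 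So $D+\beta+\varphi$ is indeed flat.

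Next, injectivity of $\iota$ is immediate since $(\beta,\varphi)$ is recovered from $D+\beta+\varphi$ by subtracting the fixed connection $D$ and taking $(0,1)$- and $(1,0)$-parts. For the derivative: the tangent space to $\Scal_{(\dbar_E,\Phi)}$ at the origin is $\{(\beta,\varphi):D''(\beta,\varphi)=0,\ D'(\beta,\varphi)=0\}=\Hcal^1(\dbar_E,\Phi)$, and $d\iota$ sends $(\beta,\varphi)$ to $\mu=\beta+\varphi\in T_D\Dscr=\ker(D:\Omega^1\to\Omega^2)$; this lands in $T_D\Dscr$ because $D\mu=D''\beta+(D'\beta+D''\varphi)+D'\varphi$ and each bidegree component vanishes by the linearized equations. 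Injectivity of $d\iota$ is again clear by taking Hodge parts. The same argument at a nearby point $(\beta_0,\varphi_0)\in\Scal_{(\dbar_E,\Phi)}$ shows $d\iota$ is injective there as well, so $\iota$ is an injective immersion.

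The one point requiring genuine care — and the main obstacle — is upgrading ``injective immersion'' to ``embedding'', i.e.\ showing the topology on $\Scal_{(\dbar_E,\Phi)}$ agrees with the subspace topology from $\Dscr$, and identifying the local model. Here I would invoke the slice/Kuranishi picture: near a polystable $(\dbar_E,\Phi)$ the gauge action on the flatness equations admits a slice transverse to the $\Gg(E)$-orbit, and the Hodge-gauge condition $D'(\beta,\varphi)=0$ (equivalently $D^\ast\mu=0$ after identification) is precisely the Coulomb-type slice condition, so $\Scal_{(\dbar_E,\Phi)}$ maps homeomorphically onto a neighborhood of $D$ inside the locally closed subset $\{D+\mu:D\mu+\tfrac12[\mu,\mu]=0,\ D^\ast\mu=0\}\subset\Dscr$. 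Concretely, one uses elliptic estimates for the operator $D^\ast D + $ (zeroth order) on $\Omega^1(\slfrak(E))$ — an isomorphism on the appropriate Sobolev completions transverse to $\ker$ — to produce a continuous local inverse to $\iota$, and the implicit function theorem in Banach (Sobolev) spaces to see that near $0$ the solution set of the two slice equations is a manifold on which $\iota$ restricts to the identity in suitable charts; elliptic regularity then returns one to the $C^\infty$ category. Local closedness of the image follows since the flatness and Coulomb equations are closed conditions cutting out $\iota(\Scal_{(\dbar_E,\Phi)})$ inside an open neighborhood of $D$ in $\Dscr$.
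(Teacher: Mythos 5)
The conclusion of your first paragraph is true, but your verification of it is not. On a Riemann surface there is no splitting of the flatness equation by bidegree: $\Omega^{2,0}=\Omega^{0,2}=0$, and, more importantly, $D''=\dbar_E+\Phi$ and $D'=\partial_E+\Phi^\ast$ are not operators of pure Hodge type. Concretely $D''(\beta,\varphi)=\dbar_E\varphi+[\Phi,\beta]$ and $D'(\beta,\varphi)=\partial_E\beta+[\Phi^\ast,\varphi]$, both valued in $\Omega^{1,1}(\slfrak(E))$, so your identity $D''(\beta,\varphi)+[\beta,\varphi]=\bigl(D''\beta+\tfrac12[\beta,\beta]\bigr)+\bigl(D''\varphi+D'\beta+[\beta,\varphi]\bigr)$ is false (the left side contains no $\partial_E\beta$ term), and your ``three conditions'' would force $[\Phi,\beta]=0$ and $[\Phi^\ast,\varphi]=0$, which neither hold on $\Scal_{(\dbar_E,\Phi)}$ nor are needed for flatness. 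The correct check is a one-liner: for $\mu=\beta+\varphi$ the sum of the two defining equations of $\Scal_{(\dbar_E,\Phi)}$ is exactly $D\mu+\tfrac12[\mu,\mu]=0$, since $[\beta,\beta]$ and $[\varphi,\varphi]$ vanish for type reasons.

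The more serious error is in your last paragraph: $D'(\beta,\varphi)=0$ is \emph{not} equivalent to $D^\ast\mu=0$. By the K\"ahler identities \eqref{eqn:kahler}, $D^\ast\mu=-i\Lambda D^c\mu$ with $D^c=D''-D'$, so the Coulomb condition reads $D''(\beta,\varphi)=D'(\beta,\varphi)$; on the Hodge slice one has instead $D^c\mu=-[\beta,\varphi]$, which is nonzero in general. The remark immediately following the lemma makes exactly this point: the Hodge slice and the de Rham (Coulomb) slice are different subsets of $\Dscr$, agreeing only to first order at the origin. Your elliptic/implicit-function-theorem argument therefore identifies the wrong image, and in any case would only give a statement near $D$, whereas the lemma is about all of $\Scal_{(\dbar_E,\Phi)}$. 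None of that machinery is needed (which is why the paper records this as a direct computation): the map is the restriction of the affine homeomorphism $\Omega^{0,1}(\slfrak(E))\oplus\Omega^{1,0}(\slfrak(E))\to D+\Omega^1(\slfrak(E))$, $(\beta,\varphi)\mapsto D+\beta+\varphi$, onto the space of all connections, so it is automatically injective, an immersion, and a homeomorphism onto its image; it intertwines $(\beta,\varphi)\mapsto(i\beta,i\varphi)$ with $\mu\mapsto i\mu$, hence is holomorphic; and since flatness together with $D'(\beta,\varphi)=0$ implies the first slice equation, the image is precisely $\{D+\mu\in\Dscr\mid D'(\mu^{0,1},\mu^{1,0})=0\}$, the intersection of $\Dscr$ with a closed affine subspace cut out by one continuous linear equation, hence closed and in particular locally closed in $\Dscr$.
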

We shall often view $\Scal_{(\dbar_E,\Phi)}$ as a subset of $\Dscr$ via this embedding.

\begin{Remark}
The subset $\Scal_{(\dbar_E,\Phi)}\subset\Dscr$ does not coincide with the usual \emph{de Rham slice} to $\Dscr$ at $D$. The latter is defined by requiring $D+\beta+\varphi$ to be flat and $D^\ast(\beta+\varphi)=0$. The first of these equations is satisfied by points in $\Scal_{(\dbar_E,\Phi)}$. The second equation, however,  is equivalent to $D^c(\beta,\varphi)=0$, whereas the second equation defining $\Scal_{(\dbar_E,\Phi)}$ is $D'(\beta,\varphi)=0$. Of course, the tangent spaces   at the origin of the two slices agree.
\end{Remark}

Let
\begin{align}
\begin{split} \label{eqn:p}
p_{\rH} : \Scal_{(\dbar_E,\Phi)}\cap \Hscr^{\rm ps} &\lra \Mm_{\rH} : (\beta,\varphi)\mapsto [(\dbar_E+\beta, \Phi+\varphi)] \ ,\\
p_{\rdR} : \Scal_{(\dbar_E,\Phi)}\cap \Dscr^{\rm cr} &\lra \Mm_{\rdR} : (\beta,\varphi)\mapsto [D+\beta+\varphi] \ .
\end{split}
\end{align}
be the projection maps to moduli.  Note that if $(\dbar_E,\Phi)$ is stable (or equivalently, $D$ is irreducible), then the maps $p_{\rH}$ and $p_{\rdR}$ are well-defined in a neighborhood of the origin in $\Scal_{(\dbar_E,\Phi)}$.
For completeness, we state without proof the following result which shows that the slice defines a local coordinate chart for the moduli space (cf.\ \cite[Thm.\ 7.3.17]{DiffGeomCompVectBun}). We shall prove a stronger statement for a restricted slice in the next section.
\begin{Proposition} \label{prop:slice1}
If $(\dbar_E,\Phi)$ is stable, then $p_{\rH}$ $($resp.\ $p_{\rdR}$$)$ is a homeomorphism from a neighborhood of the origin in $\Scal_{(\dbar_E,\Phi)}$ to a neighborhood of $[(\dbar_E,\Phi)]\in\Mm_{\rH}$ $($resp.\ $[D]\in\Mm_{\rdR}$$)$.
\end{Proposition}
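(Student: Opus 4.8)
The plan is to run the classical slice argument for the action of the complex gauge group $\Gg(E)$, using stability of $(\dbar_E,\Phi)$ to make the relevant cohomology of the deformation complex \eqref{eqn:complex1} vanish; the cases of $p_{\rH}$ and $p_{\rdR}$ are treated in parallel, the only difference being whether one works inside $\Hscr$ or, via Lemma \ref{lem:slice}, inside $\Dscr$. First I would fix a large Sobolev exponent $k$ and complete all spaces accordingly, so that $\Gg^{k+1}(E)$ is a Banach Lie group acting smoothly on the Banach affine space of $\dbar$-operators and Higgs fields (translated so that $(\dbar_E,\Phi)$ is the origin, with tangent space $\Omega^{0,1}(\slfrak(E))\oplus\Omega^{1,0}(\slfrak(E))$), and $\Hscr^k$, $\Dscr^k$ are its integrable loci. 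Since $(\dbar_E,\Phi)$ is stable it is simple, so $H^0(C(\dbar_E,\Phi))=H^2(C(\dbar_E,\Phi))=0$; equivalently the first map $D''\colon\Omega^0(\slfrak(E))\to\Omega^{0,1}(\slfrak(E))\oplus\Omega^{1,0}(\slfrak(E))$ of \eqref{eqn:complex1} is injective with closed range, and elliptic theory gives the $L^2$-orthogonal decomposition
\[
\Omega^{0,1}(\slfrak(E))\oplus\Omega^{1,0}(\slfrak(E))=\im D''\ \oplus\ \ker (D'')^\ast\ .
\]
By the K\"ahler identities \eqref{eqn:kahler}, $\ker(D'')^\ast=\ker D'$, so the linear subspace $\{(\beta,\varphi):D'(\beta,\varphi)=0\}$ underlying the slice is exactly the $L^2$-orthogonal complement of the infinitesimal gauge orbit $\im D''$.

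Next I would check that the two conditions defining $\Scal_{(\dbar_E,\Phi)}$ have surjective linearization at the origin: the linearization of $D''(\beta,\varphi)+[\beta,\varphi]=0$ is $D''\colon\Omega^{0,1}(\slfrak(E))\oplus\Omega^{1,0}(\slfrak(E))\to\Omega^{1,1}(\slfrak(E))$, which is onto because $H^2(C(\dbar_E,\Phi))=0$, while the linearization of $D'(\beta,\varphi)=0$ is likewise surjective; hence $\Scal_{(\dbar_E,\Phi)}$ is a Banach submanifold near $0$ with $T_0\Scal_{(\dbar_E,\Phi)}=\Hcal^1(\dbar_E,\Phi)$. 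Similarly $H^2=0$ makes $\Hscr$ a Banach submanifold near the stable point $(\dbar_E,\Phi)$, and irreducibility of $D$ makes $\Dscr$ a Banach submanifold near $D$, with tangent space $\{\mu:D\mu=0\}$. I would then apply the Banach inverse function theorem to the orbit map
\[
\Gg^{k+1}(E)\times\Scal_{(\dbar_E,\Phi)}\longrightarrow \Hscr^k\ ,\qquad (g,s)\longmapsto s\cdot g\ ,
\]
at $(\mathrm{id},0)$. Its differential there is $(\dot g,(\dot\beta,\dot\varphi))\mapsto D''\dot g+(\dot\beta,\dot\varphi)$ with $(\dot\beta,\dot\varphi)$ ranging over $\Hcal^1(\dbar_E,\Phi)\subset\ker(D'')^\ast$; by the displayed orthogonal decomposition and the injectivity of $D''$ this is a Banach-space isomorphism onto $T_{(\dbar_E,\Phi)}\Hscr$. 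The inverse function theorem then gives that $(g,s)\mapsto s\cdot g$ is a diffeomorphism from a neighborhood of $(\mathrm{id},0)$ onto a neighborhood of $(\dbar_E,\Phi)$ in $\Hscr^k$; elliptic regularity upgrades this to the smooth category. The de Rham case is identical: by Lemma \ref{lem:slice}, $D+\beta+\varphi$ is flat for $(\beta,\varphi)\in\Scal_{(\dbar_E,\Phi)}$, so $\Scal_{(\dbar_E,\Phi)}\hookrightarrow\Dscr$, the infinitesimal orbit is again $\im D''$ sitting in $T_D\Dscr=\ker D=\im D''\oplus\Hcal^1(\dbar_E,\Phi)$, and the differential $(\dot g,(\dot\beta,\dot\varphi))\mapsto D\dot g+(\dot\beta+\dot\varphi)$ is an isomorphism onto $T_D\Dscr$.

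From the local diffeomorphism the proposition follows by the two usual observations. Surjectivity of $p_{\rH}$ onto a neighborhood of $[(\dbar_E,\Phi)]$: the diffeomorphism fills a full neighborhood of $(\dbar_E,\Phi)$ in $\Hscr$, and polystability is an open condition near a stable point, so every nearby polystable Higgs bundle is gauge equivalent to a point of $\Scal_{(\dbar_E,\Phi)}$ close to $0$. Injectivity of $p_{\rH}$ near $0$: if $s_1\cdot g=s_2$ with $s_1,s_2$ close to $0$, a standard continuity/properness argument for the gauge action on the stable (hence simple) locus forces $g$ close to the stabilizer of $(\dbar_E,\Phi)$, which is the centre of $\sSL(n,\C)$ and acts trivially; so $g$ is close to $\mathrm{id}$, and the local diffeomorphism gives $s_1=s_2$. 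Continuity of the inverse is immediate from the slice structure, so $p_{\rH}$ is a homeomorphism onto a neighborhood of $[(\dbar_E,\Phi)]$; the same argument with $\Dscr$ and irreducibility of $D$ in place of $\Hscr$ and stability gives the statement for $p_{\rdR}$.

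I expect the only delicate point to be the interplay of the two equations cut out by $\Scal_{(\dbar_E,\Phi)}$: $D''(\beta,\varphi)+[\beta,\varphi]=0$ is the nonlinear integrability (Higgs bundle, resp.\ flatness) equation, automatically preserved by the gauge action, while $D'(\beta,\varphi)=0$ is the linear gauge-fixing that must be identified, through the K\"ahler identities \eqref{eqn:kahler}, with the $L^2$-orthogonal complement of $\im D''$; once one has $\ker D'\cap\im D''=\{0\}$ and $\ker D'+\im D''$ equal to everything — precisely $H^0(C(\dbar_E,\Phi))=0$ together with the Hodge decomposition — the inverse function theorem does the rest. Rather than carrying out this bookkeeping in full detail one may quote \cite[Thm.\ 7.3.17]{DiffGeomCompVectBun}, or observe that the stronger slice theorem proved in Section \ref{sec:bb-slice} for the restricted $\BB$-slice contains Proposition \ref{prop:slice1} as a special case.
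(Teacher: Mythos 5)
The paper does not actually prove Proposition \ref{prop:slice1}: it is stated without proof, with a pointer to \cite[Thm.\ 7.3.17]{DiffGeomCompVectBun}, and the stronger statements proved later concern only the restricted $\BB$-slice at a $\VHS$. Your argument is exactly the standard Coulomb/Kuranishi slice argument that the citation encapsulates (Sobolev completions, $H^0=H^2=0$ from stability, K\"ahler identities \eqref{eqn:kahler} identifying the gauge-fixing condition $D'(\beta,\varphi)=0$ with the orthocomplement of $\im D''$, inverse function theorem for the orbit map, openness of stability for surjectivity, properness of the action on the simple locus for injectivity, and the parallel run inside $\Dscr$ using irreducibility of $D$), so it correctly supplies the omitted proof and is in the spirit the authors intended.

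Two caveats. First, a small gap in your submanifold step: surjectivity of each of $D''$ and $D'$ separately onto $\Omega^{1,1}(\slfrak(E))$ does not formally give surjectivity of the joint linearization $(\beta,\varphi)\mapsto\bigl(D''(\beta,\varphi),D'(\beta,\varphi)\bigr)$, which is what the implicit function theorem needs to exhibit $\Scal_{(\dbar_E,\Phi)}$ as a Banach submanifold with tangent space $\Hcal^1(\dbar_E,\Phi)$; this joint surjectivity is true here (the joint operator is elliptic, its index equals $\dim\Hcal^1(\dbar_E,\Phi)=\dim\Mm_\rH$, and its kernel is exactly $\Hcal^1(\dbar_E,\Phi)$, so the cokernel vanishes), or one can sidestep it entirely by giving $\Scal_{(\dbar_E,\Phi)}$ its local structure via the Kuranishi homeomorphism of Proposition \ref{prop:kuranishi1}. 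Second, your closing alternative — deducing Proposition \ref{prop:slice1} from the results of Section \ref{sec:bb-slice} — does not work: those results apply only at a $\VHS$, whereas Proposition \ref{prop:slice1} is needed at an arbitrary stable Higgs bundle, and within the paper's logic the injectivity step of Corollary \ref{cor:ph} is itself deduced from Proposition \ref{prop:slice1}, so that route is circular. Quoting \cite[Thm.\ 7.3.17]{DiffGeomCompVectBun}, or the direct argument you give, is the right way to close the proof.
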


The \emph{Kuranishi map} is defined as follows:
\begin{equation} \label{eqn:kuranishi}
k(\beta, \varphi) := (\beta,\varphi)+ (D'')^\ast G([\beta,\varphi])\ ,
\end{equation}
where $G$ is the Green's operator for the Laplacian $D''(D'')^\ast=iD''D'\Lambda$ (see \eqref{eqn:kahler}) acting on $\Omega^{1,1}(\slfrak(E))$. Notice that if $(\beta,\varphi)\in \Scal_{(\dbar_E,\Phi)}$, then $[\beta,\varphi]\perp\Hcal^2(C(\dbar,\Phi))$ and  $k(\beta,\varphi)\in \Hcal^1(C(\dbar,\Phi))$.  By the implicit function theorem we have (cf.\ \cite[Thm.\ 7.3.23]{DiffGeomCompVectBun}):
\begin{Proposition} \label{prop:kuranishi1}
If $(\dbar_E,\Phi)$ is stable, then $k: \Scal_{(\dbar_E,\Phi)}\to \Hcal^1(C(\dbar,\Phi))$ is a homeomorphism in a neighborhood of the origin.
\end{Proposition}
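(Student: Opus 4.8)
The plan is to deduce this from the standard Kuranishi theory for the deformation complex $C(\dbar_E,\Phi)$, adapted to the specific (nonstandard) slice $\Scal_{(\dbar_E,\Phi)}$ defined above. First I would observe that the map $k$ restricted to the slice has the right linearization: since $(\beta,\varphi)\in\Scal_{(\dbar_E,\Phi)}$ forces $[\beta,\varphi]\perp \Hcal^2(C(\dbar_E,\Phi))$ (this uses stability, so that $\Hcal^2=\{0\}$, making the orthogonality vacuous, but I would keep the argument uniform) and $(D'')^\ast G$ raises no issues, one has $k(\beta,\varphi)\in\Hcal^1(C(\dbar_E,\Phi))$, and $dk$ at the origin is the identity on $\Hcal^1$. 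So the implicit/inverse function theorem in the appropriate Sobolev completions gives that $k$ is a local diffeomorphism from a neighborhood of $0$ in $\Scal_{(\dbar_E,\Phi)}$ onto a neighborhood of $0$ in $\Hcal^1(C(\dbar_E,\Phi))$, provided we know $\Scal_{(\dbar_E,\Phi)}$ is a manifold near the origin with tangent space $\Hcal^1$ there.

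Next I would establish that last point: that near the origin $\Scal_{(\dbar_E,\Phi)}$ is a smooth (finite-dimensional, after elliptic regularity) submanifold modeled on $\Hcal^1(C(\dbar_E,\Phi))$. The two defining equations are $D''(\beta,\varphi)+[\beta,\varphi]=0$ and $D'(\beta,\varphi)=0$; linearizing at $0$ gives $D''(\beta,\varphi)=0$, $D'(\beta,\varphi)=0$, whose solution space is exactly $\Hcal^1(\dbar_E,\Phi)$ by \eqref{eqn:H1} and the K\"ahler identity $\ker(D'')^\ast=\ker D'$ from \eqref{eqn:kahler}. Stability gives $H^0=H^2=0$, so the relevant operator is an isomorphism onto its (closed, complementary) image and the implicit function theorem applies; elliptic regularity promotes Sobolev solutions to smooth ones. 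This identifies $\Scal_{(\dbar_E,\Phi)}$ near $0$ with the zero set cut out transversally, hence a manifold with the claimed tangent space, and then composing with the previous paragraph yields that $k$ is a homeomorphism (indeed biholomorphism, since both $D''$, $D'$, $[\cdot,\cdot]$, and $(D'')^\ast G$ are compatible with the complex structures) near the origin.

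For the precise references I would cite \cite[Thm.\ 7.3.23]{DiffGeomCompVectBun}, whose argument is essentially the same — the only new feature here is that our slice uses the gauge-fixing condition $D'(\beta,\varphi)=0$ in place of the more symmetric $D^c(\beta,\varphi)=0$ of the de Rham slice, but since $\ker D' = \ker(D'')^\ast$ on $1$-forms, the $\Scal$-condition is exactly the Coulomb-type gauge adapted to the complex $C(\dbar_E,\Phi)$ read with differential $D''$, and all the standard estimates go through verbatim. The main obstacle — really the only place care is needed — is verifying that the nonlinear term $[\beta,\varphi]$ lands in the image of $D''$ (equivalently is orthogonal to the harmonic space $\Hcal^2$) so that $k$ actually maps the slice into $\Hcal^1$; under the stability hypothesis $\Hcal^2=\{0\}$ this is automatic, but I would spell it out so that the proof also indicates what would be needed in the merely semistable/polystable case. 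Everything else is a routine application of the implicit function theorem together with elliptic regularity.
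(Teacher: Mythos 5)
Your overall strategy is exactly the one the paper has in mind (it gives no argument beyond invoking the implicit function theorem and citing \cite[Thm.\ 7.3.23]{DiffGeomCompVectBun}), and your first and last steps are fine. The genuine gap is in the middle step, where you assert that near the origin $\Scal_{(\dbar_E,\Phi)}$ is ``cut out transversally'' because the linearization $(\beta,\varphi)\mapsto\bigl(D''(\beta,\varphi),D'(\beta,\varphi)\bigr)$ is ``an isomorphism onto its (closed, complementary) image.'' Having closed split image and split kernel is \emph{not} sufficient for the zero set of a nonlinear map to be a manifold with tangent space the kernel: already $F(x,y)=(y,x^2)$ on $\R^2$ has $dF(0)(a,b)=(b,0)$ with split kernel and closed split image, yet $F^{-1}(0)=\{0\}$. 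What one needs is surjectivity onto the chosen target, and the joint operator $(D'',D')\colon\Omega^{0,1}(\slfrak(E))\oplus\Omega^{1,0}(\slfrak(E))\to\Omega^{1,1}(\slfrak(E))\oplus\Omega^{1,1}(\slfrak(E))$ is elliptic of index zero with nontrivial kernel $\Hcal^1(\dbar_E,\Phi)$, hence is \emph{not} surjective; so as written the implicit function theorem does not apply. Relatedly, the orthogonality $[\beta,\varphi]\perp\Hcal^2$ does not use stability at all: on the slice $[\beta,\varphi]=-D''(\beta,\varphi)$ is $D''$-exact, hence orthogonal to harmonics; stability is needed elsewhere.

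The repair is where $\Hcal^2=0$ actually enters. Since the second slice equation is linear, view $\Scal_{(\dbar_E,\Phi)}$ as the zero set of $(\beta,\varphi)\mapsto D''(\beta,\varphi)+[\beta,\varphi]$ restricted to the closed subspace $\ker D'$. The linearization $D''|_{\ker D'}$ \emph{is} surjective onto $\Omega^{1,1}(\slfrak(E))$: given $\psi$, the element $a=(D'')^\ast G\psi$ satisfies $D''a=D''(D'')^\ast G\psi=\psi-H(\psi)=\psi$ (stability kills $\Hcal^2$) and $D'a=0$, because on $(1,1)$-forms the K\"ahler identity \eqref{eqn:kahler} gives $(D'')^\ast=iD'\Lambda$ and $(D')^2=0$. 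Its kernel is $\Hcal^1(\dbar_E,\Phi)$, finite dimensional hence split, so the implicit function theorem in Sobolev completions plus elliptic regularity now gives your manifold statement, and the rest of your argument goes through. Alternatively, you can bypass the manifold claim entirely, as in the cited reference: $dk(0)=\mathrm{id}$, so $k$ extends to a local diffeomorphism of the ambient space near the origin, and for $u\in\Hcal^1(\dbar_E,\Phi)$ small the point $(\beta,\varphi)=k^{-1}(u)$ lies automatically in the slice, since $D''(\beta,\varphi)+[\beta,\varphi]=H([\beta,\varphi])=0$ (this is the one place stability is used) and $D'(\beta,\varphi)=D'u-D'(D'')^\ast G[\beta,\varphi]=0$ by the same identities as above; hence $k|_{\Scal_{(\dbar_E,\Phi)}}$ is a homeomorphism onto a neighborhood of $0$ in $\Hcal^1(\dbar_E,\Phi)$.
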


\subsection{The $\BB$-slice at a $\VHS$} \label{sec:bb-slice}
Suppose $(\dbar_E,\Phi)$ is a $\VHS$ as in Section \ref{sec:vhs}, and recall the notation of \eqref{eq End decomp} and \eqref{eq N+ notation}.
Since $\Phi\in H^0(\End_{-1}(E)\otimes K)$, we have
\[\ad_\Phi:\End_j(E)\to\End_{j-1}(E)\otimes K~.\]
Since $\dbar_E$ preserves the grading, for each $j$ we get a subcomplex $C_j(\dbar_E, \Phi)$ of \eqref{eqn:complex1} given by:
$$
\Omega^0(\End_{j}(E))\stackrel{D''}{\lra} \Omega^{0,1}(\End_{j}(E))\oplus \Omega^{1,0}(\End_{j-1}(E)) \stackrel{D''}{\lra}\Omega^{1,1}(\End_{j-1}(E))\ .
$$
Let $\Hcal^1_j(\dbar_E,\Phi):=\ker D''\cap\ker D'$ for the middle term in $C_j(\dbar_E, \Phi)$.
This gives a grading $\Hcal^1(\dbar_E,\Phi)=\bigoplus\limits_{1-\ell}^{\ell}\Hcal^1_j(\dbar_E,\Phi)$.
We will use the following notation:
\begin{equation}
    \label{eq HH1+ notation}
   \Hcal^1_+(\dbar_E,\Phi))=\bigoplus\limits_{j=1}^{\ell} \Hcal^1_j(\dbar_E,\Phi))~.
\end{equation}
In particular, $\Hcal^i_+(\dbar_E,\Phi)\simeq H^i(C_+(\dbar_E, \Phi))$ for the subcomplex
$C_+(\dbar_E, \Phi)$
of \eqref{eqn:complex1} given by
\begin{equation} \label{eqn:complex2}
  \Omega^0(N_+)\stackrel{D''}{\lra} \Omega^{0,1}(N_+)\oplus \Omega^{1,0}(L\oplus N_+) \stackrel{D''}{\lra}\Omega^{1,1}(L\oplus N_+)\ .
\end{equation}

\begin{Lemma}  \label{lem:dimension}
If $(\dbar_E,\Phi)$ is a stable $\VHS$, then $\Hcal^1_+(\dbar_E,\Phi)$ is half-dimensional.
\end{Lemma}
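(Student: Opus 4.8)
The plan is to compute $\dim_{\CBbb}\Hcal^1_+(\dbar_E,\Phi)$ via the Euler characteristic of the subcomplex $C_+(\dbar_E,\Phi)$ from \eqref{eqn:complex2}, exploiting the fact that for a stable $\VHS$ the outer cohomology of the full deformation complex vanishes, and that the Serre-duality pairing on $C(\dbar_E,\Phi)$ interchanges the ``$+$'' part with the ``$\leq 0$'' part. First I would record the standard fact that $\chi(C(\dbar_E,\Phi)) = -\dim_{\CBbb}\Mm_{\rH} = -(n^2-1)(2g-2)$, which for a stable Higgs bundle (where $H^0=H^2=0$) is just $-\dim\Hcal^1(\dbar_E,\Phi)$. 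Because $\dbar_E$ and $\ad_\Phi$ both respect the $\Z$-grading on $\fsl(E) = \bigoplus_j \End_j(E)$ (with $\Phi$ of degree $-1$), the complex $C(\dbar_E,\Phi)$ splits as a direct sum $\bigoplus_j C_j(\dbar_E,\Phi)$, and hence $\chi(C(\dbar_E,\Phi)) = \sum_j \chi(C_j(\dbar_E,\Phi))$. The graded pieces $\Hcal^1_j$ assemble the splitting $\Hcal^1(\dbar_E,\Phi) = \bigoplus_j \Hcal^1_j(\dbar_E,\Phi)$ asserted in the text.

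The key structural input is a Serre-duality symmetry. The holomorphic symplectic form $\omega_I^{\CBbb}$ in \eqref{eq hol symplectic form on Higgs bundles} pairs $\Hcal^1_j$ nondegenerately with $\Hcal^1_{-j}$ (concretely, the pairing $i\int_X \Tr(\varphi_2\wedge\beta_1 - \varphi_1\wedge\beta_2)$ sends $\End_j(E)\otimes K$ against $\End_{-j}(E)$ via the trace, using that $\Tr$ kills $\End_j\cdot\End_k$ unless $j+k=0$). Equivalently, one uses that $C_j(\dbar_E,\Phi)$ and $C_{-j}(\dbar_E,\Phi)$ are Serre-dual complexes, so $H^i(C_j) \cong H^{2-i}(C_{-j})^\ast$; combined with the vanishing $H^0(C(\dbar_E,\Phi)) = H^2(C(\dbar_E,\Phi)) = 0$ at a stable point, this forces $H^0(C_j) = H^2(C_j) = 0$ for every $j$, so $\dim\Hcal^1_j = -\chi(C_j)$ and $\dim\Hcal^1_j = \dim\Hcal^1_{-j}$. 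I would also check the degree-zero piece: $C_0(\dbar_E,\Phi)$ is the complex $\Omega^0(L)\to\Omega^{0,1}(L)\oplus\Omega^{1,0}(L)\to\Omega^{1,1}(L)$ (with $L = \End_0\cap\fsl(E)$), and since $\ad_\Phi$ shifts degree down by one, the $\Phi$-contribution to the $j=0$ term of the complex vanishes on the nose, so $C_0$ reduces to a Dolbeault-type complex computing $H^\bullet(X,L)$; for a stable $\VHS$ one gets $H^0(X,L)=H^2(X,L)=0$, hence $\Hcal^1_0$ is self-dual and its dimension is $-\chi(L) = (\ell^2 - \ldots)$-type expression — in any case it is accounted for symmetrically.

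Putting the pieces together: since $\dim\Hcal^1 = \sum_j \dim\Hcal^1_j$ with $\dim\Hcal^1_j = \dim\Hcal^1_{-j}$, and since $\Hcal^1_+ = \bigoplus_{j\geq 1}\Hcal^1_j$ while the complementary summand is $\Hcal^1_0 \oplus \bigoplus_{j\leq -1}\Hcal^1_j$, I must show the two agree, i.e. that $\Hcal^1_0 = 0$ — \emph{or}, more carefully, reconcile the convention: the text's indexing in \eqref{eq HH1+ notation} runs $j=1,\dots,\ell$ and the claim is that this ``half'' has dimension $\frac12\dim\Mm_{\rH}$. The clean way is to pair $C_j$ with $C_{1-j}$ rather than $C_{-j}$: because $\Phi$ has degree $-1$, the middle term of $C_j$ is $\Omega^{0,1}(\End_j)\oplus\Omega^{1,0}(\End_{j-1})$, and Serre duality on $X$ (tensoring the $\End_j$ slot against $\End_{-j}$ and the $\End_{j-1}$ slot against $\End_{1-j}$, both landing in $\Oo_X$ after $\Tr$ and pairing with $K$) identifies $C_j(\dbar_E,\Phi)^\vee[2]$ with $C_{1-j}(\dbar_E,\Phi)$. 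Hence $\dim\Hcal^1_j = \dim\Hcal^1_{1-j}$, and the involution $j\mapsto 1-j$ has no fixed integer, so it matches $\{j \geq 1\}$ bijectively with $\{j \leq 0\}$. Therefore $\dim\Hcal^1_+ = \sum_{j\geq 1}\dim\Hcal^1_j = \sum_{j\leq 0}\dim\Hcal^1_j = \frac12\dim\Hcal^1(\dbar_E,\Phi) = \frac12\dim_{\CBbb}\Mm_{\rH}$, which is the assertion.

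The main obstacle I anticipate is pinning down the correct duality pairing so that it matches the \emph{asymmetric} grading convention forced by $\deg\Phi = -1$ (the shift $j \mapsto 1-j$ rather than $j \mapsto -j$), and verifying that the outer-term vanishing $H^0 = H^2 = 0$ at a stable $\VHS$ propagates to each graded subcomplex $C_j$ — this uses stability together with the fact that a $\Phi$-invariant graded subsheaf of $(\Ee,\dbar_E,\Phi)$ of positive degree would contradict stability, so $H^0(C_j) = 0$, and then Serre duality gives $H^2(C_j) = H^0(C_{1-j})^\ast = 0$ as well. Once those two points are in place, the dimension count is a one-line Euler-characteristic bookkeeping. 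I would present the argument via the symplectic-form pairing rather than abstract Serre duality, since $\omega_I^{\CBbb}$ is already on the table and its restriction to $\Hcal^1_j\times\Hcal^1_{1-j}$ is visibly the relevant nondegenerate pairing.
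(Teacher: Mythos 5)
Your proof is correct, but it takes a genuinely different route from the paper's. The paper rolls $C_+(\dbar_E,\Phi)$ up into the elliptic operator $(D'')^\ast+D''$, uses stability to kill $H^0(C_+)$ and $H^2(C_+)$ so that $\dim\Hcal^1_+$ is an index, then deforms the Higgs field to zero and computes the index of the decoupled $\dbar$-operators by Riemann--Roch, landing directly on $(n^2-1)(g-1)$. You instead argue by symmetry: the full deformation complex splits as $\bigoplus_j C_j(\dbar_E,\Phi)$, stability kills $H^0(C)$ and $H^2(C)$ and hence every $H^0(C_j)$ and $H^2(C_j)$ (you can shortcut your subsheaf argument here, since $H^i(C)=\bigoplus_j H^i(C_j)$), and Serre duality for the two-term sheaf complexes --- equivalently the $\omega_I^\C$-pairing, which by the trace-degree count is nonzero only between $\Hcal^1_j$ and $\Hcal^1_{1-j}$ --- gives $\dim\Hcal^1_j=\dim\Hcal^1_{1-j}$; since $j\mapsto 1-j$ swaps $\{j\geq 1\}$ with $\{j\leq 0\}$, $\Hcal^1_+$ is half of $\Hcal^1\simeq T_{[(\dbar_E,\Phi)]}\Mm_\rH$. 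You correctly spotted the essential subtlety that the involution is $j\mapsto 1-j$ rather than $j\mapsto -j$, forced by $\deg\Phi=-1$. Two small points: the nondegeneracy of the restricted pairing $\Hcal^1_j\times\Hcal^1_{1-j}\to\C$ should be stated explicitly --- it follows from the grading orthogonality together with nondegeneracy of $\omega_I^\C$ on $\Hcal^1$, or most quickly from the observation that $K$ carries $\Hcal^1_j$ to $\Hcal^1_{1-j}$ and $\omega_I^\C(\mu,K\mu)\neq 0$ for $\mu\neq 0$ (it is essentially the $L^2$ norm); and the exploratory remark about $C_0$ in your second paragraph is wrong as stated, since $C_0$ contains the nonzero map $\ad_\Phi:\End_0(E)\to\End_{-1}(E)\otimes K$ and is not the Dolbeault complex of $L$ --- but your final argument never uses it. As for what each approach buys: yours is computation-free and exhibits $\Hcal^1_+$ as a Lagrangian subspace of the symplectic vector space $(\Hcal^1,\omega_I^\C)$, which also makes Proposition \ref{prop:lagrange} transparent at the tangent-space level, but it presupposes $\dim\Hcal^1=\dim\Mm_\rH$ and the duality; the paper's index computation is self-contained and yields the numerical dimension without invoking the symplectic structure.
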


\begin{proof}
Roll up the complex \eqref{eqn:complex2} to obtain an operator 
$$\slash \hskip-.095in D := (D'')^\ast+D'' : \Omega^{0,1}(N_+)\oplus \Omega^{1,0}(L\oplus N_+) 
\lra  \Omega^0(N_+)\oplus \Omega^{1,1}(L\oplus N_+)\ .
$$
Since $(\dbar_E,\Phi)$ is  stable, $H^0(C_+(\dbar_E, \Phi))=H^2(C_+(\dbar_E, \Phi))=\{0\}$, and so 
$$
\dim H^1(C_+(\dbar_E, \Phi))=\ind\, \slash \hskip-.096in D\ .
$$
Deforming the Higgs field to zero does not change the index but it does decouple the operators. We therefore have
$$\ind\, \slash \hskip-.096in D = \ind\, \dbar_0 - \ind\, \dbar_1\ ,
$$
where $\dbar_0$ is the $\dbar$-operator on sections of $(L\oplus N_+)\otimes K$ induced by $\dbar_E$, and $\dbar_1$ is similarly  the induced $\dbar$-operator on sections of $N_+$. Since $\deg L=0$, we have by Riemann-Roch:
\begin{align*}
 \ind\, \dbar_0 &= \deg N_+ +(\rank L +\rank N_+)(g-1) \\
  \ind\, \dbar_1 &= \deg N_+ -(\rank N_+)(g-1) \ .
\end{align*}
Notice that 
$$n^2-1=\rank(\fsl( E)) = \rank L + 2\rank N_+\ ,$$
so that 
$$
\ind\, \slash \hskip-.096in D = (n^2-1)(g-1)=\tfrac{1}{2}\dim \Mm_{\rH}\ .
$$
\end{proof}

\begin{Definition} \label{def:BBslice}
The \emph{$\BB$-slice} at $(\dbar_E,\Phi)$ is defined as
\begin{align*}
\Scal^+_{(\dbar_E,\Phi)}&=\bigl\{
(\beta,\varphi)\in 
\Omega^{0,1}(N_+)\oplus \Omega^{1,0}(L\oplus N_+) \mid \\
&\qquad\qquad\qquad D''(\beta,\varphi)+[\beta,\varphi]=0\ ,\ 
D'(\beta,\varphi)=0
\bigr\}\ .
\end{align*}
\end{Definition}

If $(\dbar_E, \Phi)$ is stable, then $\Scal^+_{(\dbar_E,\Phi)}\subset \Scal_{(\dbar_E,\Phi)}$ is a half-dimensional complex submanifold in a neighborhood of the origin.  
\begin{Example}
    \label{ex:fuchsian-slice} Consider a Fuchsian point $(\bar\p_0,\Phi_0)$ from \eqref{eq VHS oper hitchin section} and recall the maps $p_\mathrm{Hit}$ and $p_\mathrm{oper}$ from \eqref{eqs oper Hitchin param}. For $(q_2,\cdots,q_n)\in\bigoplus\limits_{j=1}^{n-1}H^0(K^{j+1})$, we have
    \[p_\mathrm{Hit}(q_2,\cdots,q_n)-(\bar\p_0,\Phi_0)=(0~,~q_2+\cdots+q_n)\in \Omega^{0,1}(N_+)\oplus \Omega^{1,0}(L\oplus N_+)~,\]
    \[p_\mathrm{oper}(q_2,\cdots,q_n)-(\bar\p_0+\Phi_0^*,\p_0^h+\Phi_0)=(0~,~q_2+\cdots+q_n)\in \Omega^{0,1}(N_+)\oplus \Omega^{1,0}(L\oplus N_+)~.\]    

    Moreover, both $p_\mathrm{Hit}-(\bar\p_0,\Phi_0)$ and $p_\mathrm{oper}-(\bar\p_0+\Phi_0^*,\p_0^h+\Phi_0)$ map $\bigoplus\limits_{j=1}^{n-1}H^0(K^{j+1})$ bijectively onto both $\Hh^1_+(\bar\p_0,\Phi_0)$ and $\Ss_{(\bar\p_0,\Phi_0)}^+.$ 
    Indeed, $\bigoplus\limits_{j=1}^{n-1}H^0(K^{j+1})$ and
    $\Hh^1_+(\bar\p_0,\Phi_0)$
      have the same dimension and, by definition of the maps $p_\mathrm{Hit}$ and $p_\mathrm{oper}$, the image of each $q_j$ is in the kernel of $\ad_{\Phi_0^{*_h}}$. Hence,  we have 
    \[p_\mathrm{Hit}(q_2,\cdots,q_n)-(\bar\p_0,\Phi_0)\in \ker(D')\cap\ker(D'')~,\]
    \[p_\mathrm{oper}(q_2,\cdots,q_n)-(\bar\p_0+\Phi_0^*,\p_0+\Phi_0)\in \ker(D')\cap\ker(D'')~.\]  
and since $\beta=0$, these points are automatically in $\Ss^+_{(\bar\p_0,\Phi_0)}$ as well.
Thus, for a Fuchsian point, the restrictions of the maps $p_\rH$ and $p_\rdR$ to $\Ss^+_{(\bar\p_0,\Phi_0)}$ recover the parameterization of the Hitchin sections and the components of opers by the affine space $\bigoplus\limits_{j=1}^{n-1}H^0(K^{j+1})$. 
\end{Example}
While $\Scal_{(\dbar_E,\Phi)}$ gives only a local chart in $\Mm_{\rH}$, we shall see that $\Scal^+_{(\dbar_E,\Phi)}$ extends globally. First, we prove

\begin{Theorem} \label{thm:kuranishi}
The Kuranishi map \eqref{eqn:kuranishi} gives a biholomorphism 
$$k: \Scal^+_{(\dbar_E,\Phi)}\isorightarrow \Hcal^1_+(\dbar_E,\Phi)\ .$$
\end{Theorem}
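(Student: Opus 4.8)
The plan is to show that the Kuranishi map $k$, restricted to $\Scal^+_{(\dbar_E,\Phi)}$, is a global diffeomorphism onto $\Hcal^1_+(\dbar_E,\Phi)$, using the $\C^*$-action to promote the local statement (implicit in Proposition \ref{prop:kuranishi1} together with Lemma \ref{lem:dimension}) to a global one. First I would observe that everything in sight is $\C^*$-equivariant for a suitable weighted action. Since $(\dbar_E,\Phi)$ is a $\VHS$, there is a one-parameter family $g(\xi)$ of (complex) gauge transformations with $(\dbar_E,\Phi)\cdot g(\xi)=(\dbar_E,\xi\Phi)$; in the notation of \eqref{eq End decomp}, $g(\xi)$ acts on $\End_j(E)$ by $\xi^{-j}$ (or $\xi^j$, depending on convention). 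This induces a linear $\C^*$-action on $\Omega^{0,1}(N_+)\oplus\Omega^{1,0}(L\oplus N_+)$ under which the defining equations $D''(\beta,\varphi)+[\beta,\varphi]=0$ and $D'(\beta,\varphi)=0$ of $\Scal^+_{(\dbar_E,\Phi)}$ are preserved, because $D''$, $D'$, and the bracket are all compatible with the grading in the appropriate weighted sense (the Leibniz/bracket terms shift weight consistently). Crucially, all the weights appearing on $N_+$ and on $L\oplus N_+$ in the $\Omega^{1,0}$ slot are \emph{strictly positive} after the normalization — this is exactly why one uses $N_+$ and not all of $\End(E)$ — so the $\C^*$-action on $\Scal^+_{(\dbar_E,\Phi)}$ and on $\Hcal^1_+(\dbar_E,\Phi)$ is \emph{contracting}: every orbit limits to $0$ as $\xi\to 0$.

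Next I would check that $k$ is $\C^*$-equivariant. The linear term $(\beta,\varphi)\mapsto(\beta,\varphi)$ obviously is; for the quadratic correction $(D'')^*G([\beta,\varphi])$ one uses that $[\,\cdot\,,\,\cdot\,]$ is homogeneous of the right weight and that both $(D'')^*$ and the Green's operator $G$ (being built from $D''$, $D'$, $\Lambda$ and hence grading-compatible) commute with the action. Hence $k\circ(\xi\cdot)=(\xi\cdot)\circ k$ on $\Scal^+_{(\dbar_E,\Phi)}$. Now the argument is standard: $k$ is holomorphic, $k(0)=0$, and by Proposition \ref{prop:kuranishi1} (which applies since a stable $\VHS$ is in particular a stable Higgs bundle) $k$ is a local biholomorphism near $0$; by equivariance and the fact that the source and target actions are both linearizable and contracting with the \emph{same} positive weights, $k$ is a local biholomorphism everywhere it is defined and, by a contraction/shrinking argument (given a point $x$, flow it into the neighborhood where $k$ is known to be a homeo, apply $k$ there, flow back out), $k$ is defined on all of $\Scal^+_{(\dbar_E,\Phi)}$ and is a bijection onto $\Hcal^1_+(\dbar_E,\Phi)$. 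Injectivity globally: if $k(x_1)=k(x_2)$ then $k(\xi\cdot x_1)=k(\xi\cdot x_2)$ for all $\xi$, and for small $\xi$ both lie in the local-injectivity neighborhood, forcing $\xi\cdot x_1=\xi\cdot x_2$, hence $x_1=x_2$. Surjectivity: given $v\in\Hcal^1_+$, for small $\xi$ the point $\xi\cdot v$ has a preimage $x_\xi$ near $0$; equivariance gives $\xi^{-1}\cdot x_\xi$ as a preimage of $v$, and one checks this is independent of $\xi$ and lies in $\Scal^+_{(\dbar_E,\Phi)}$.

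Finally, biholomorphy: $k$ is holomorphic by construction (composition of holomorphic maps), it is a bijection, and its derivative at every point is an isomorphism — at $0$ this is immediate since the quadratic term vanishes to second order, and at a general point $x=\xi\cdot x'$ with $x'$ close to $0$ one uses equivariance, $dk_x = (\xi\cdot)\circ dk_{x'}\circ(\xi^{-1}\cdot)$, to reduce to the near-$0$ case. A holomorphic bijection between (finite-dimensional) complex manifolds with everywhere-invertible derivative is a biholomorphism. The main obstacle I anticipate is the bookkeeping in the first two steps: verifying carefully that the weighted $\C^*$-action is well defined on $\Scal^+_{(\dbar_E,\Phi)}$ — i.e. that the nonlinear term $[\beta,\varphi]$, the operator $D'$, and the Green's operator $G$ all respect the grading so that both the slice equations and the Kuranishi map are genuinely equivariant — and confirming that the relevant weights are all \emph{strictly} positive (which is what makes the action contracting and the global argument go through). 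Once equivariance and positivity of weights are in hand, the passage from local to global is formal.
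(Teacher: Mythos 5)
Your proposal is correct and follows essentially the same route as the paper: the paper also introduces the weighted $\C^*$-action $\xi\cdot(\beta_j,\varphi_j)=(\xi^j\beta_j,\xi^{j+1}\varphi_j)$ of \eqref{eq C* action on slice}, checks it preserves $\Scal^+_{(\dbar_E,\Phi)}$ and $\Hcal^1_+(\dbar_E,\Phi)$, notes the Kuranishi map is equivariant, and uses the strictly positive weights to propagate the local biholomorphism of Proposition \ref{prop:kuranishi1} to a global one. Your write-up merely makes explicit the equivariance of $G$ and the contraction argument for global injectivity, surjectivity, and invertibility of $dk$, which the paper leaves implicit.
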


The proof requires the following
\begin{Lemma}
    The $\C^*$-action on $\Omega^{0,1}(\End(E))\oplus\Omega^{1,0}(\End(E))$ defined by
\begin{equation}
    \label{eq C* action on slice}\xi\cdot\Bigl(\sum\limits_{j=1-\ell}^{\ell-1}\beta_j~,~\sum\limits_{j=1-\ell}^{\ell-1}\varphi_j\Bigr)=\Bigl(\sum\limits_{j=1-\ell}^{\ell-1}\xi^j\beta_j~,~\sum\limits_{j=1-\ell}^{\ell-1}\xi^{j+1}\varphi_j\Bigr)~
\end{equation}
preserves both the $\BB$-slice $\Scal^+_{(\dbar_E,\Phi)}$ and $\Hcal^1_+(\dbar_E,\Phi)$.
\end{Lemma}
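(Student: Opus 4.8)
The plan is to verify directly that the $\C^*$-action in \eqref{eq C* action on slice} preserves the two defining conditions of the $\BB$-slice and the two defining conditions of $\Hcal^1_+(\dbar_E,\Phi)$, using the compatibility of the grading on $\End(E)$ with the operators $D'$ and $D''$. The key observation is that $\dbar_E$ is grading-preserving while $\Phi\in H^0(\End_{-1}(E)\otimes K)$ and $\Phi^{\ast_h}\in \Omega^{0,1}(\End_1(E))$ (the latter because the splitting is $h$-orthogonal by Proposition \ref{Prop flat conn of fixed point}, so the adjoint of a lowering operator is a raising operator). Consequently $D''=\dbar_E+\Phi$ shifts the $\End_j$-degree of a $(0,1)$-component by $0$ and of a $(1,0)$-component by $-1$ in a way exactly matched to the weights $\xi^j$ on $\beta_j$ and $\xi^{j+1}$ on $\varphi_j$; likewise $D'=\partial_E+\Phi^{\ast_h}$ raises degree by $+1$ on the appropriate pieces.

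First I would record the weight bookkeeping: under \eqref{eq C* action on slice}, $\beta_j$ has weight $j$ and $\varphi_j$ has weight $j+1$. Then I would check term by term that each summand of $D''(\beta,\varphi)+[\beta,\varphi]$ and of $D'(\beta,\varphi)$ is homogeneous of a well-defined weight under this action. For $D''$: the piece $\dbar_E\beta_j$ lands in $\Omega^{0,1}(\End_j)$ with weight $j$; the piece $\dbar_E\varphi_j$ lands in $\Omega^{1,1}(\End_{j-1})$; the piece $[\Phi,\beta_j]$ lands in $\Omega^{1,1}(\End_{j-1})$ — and one checks that the weight assigned to $\Omega^{1,1}(\End_{j-1})$ by the rule ``$\varphi_{j-1}$ has weight $j$'' is consistent, so $\dbar_E\varphi_j+[\Phi,\beta_j]$ transforms with a single weight. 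For the bracket $[\beta,\varphi]=\sum[\beta_i,\varphi_k]$, the component in $\End_{i+k}$ carries weight $i+(k+1)=(i+k)+1$, again matching. Hence $D''(\beta,\varphi)+[\beta,\varphi]=0$ is preserved, and similarly $D'(\beta,\varphi)=0$ is preserved since $\partial_E$ preserves the grading and $\ad_{\Phi^{\ast_h}}$ raises it by one, consistent with the weight shift. The same computation, dropping the quadratic term, shows $\Hcal^1_+$ is preserved: the conditions $D''(\beta,\varphi)=0$ and $D'(\beta,\varphi)=0$ are each graded, hence $\C^*$-invariant, and the action manifestly preserves the subspace $\Omega^{0,1}(N_+)\oplus\Omega^{1,0}(L\oplus N_+)$ since $N_+$ and $L\oplus N_+$ are sums of graded pieces.

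Finally I would note that the action does genuinely land back in the $\BB$-slice's ambient space — i.e.\ it preserves $\Omega^{0,1}(N_+)\oplus\Omega^{1,0}(L\oplus N_+)$, which is clear from \eqref{eq N+ notation} since the $\beta$-part uses only $j\geq 1$ and the $\varphi$-part uses only $j\geq 0$, and these index ranges are stable under scaling by $\xi^j$, $\xi^{j+1}$. There is no real obstacle here; the only mild subtlety is getting the weight shift on the $(1,0)$-part of $\varphi_j$ right (it is $j+1$, not $j$), which is forced by the $\lambda$-scaled Leibniz rule / the requirement that the action intertwine with the $\C^*$-action \eqref{eq c^* action on Hod} on $\Mm_{\Hod}$, and which makes the cubic identity $D''(\beta,\varphi)+[\beta,\varphi]=0$ homogeneous. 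I would present the proof as a short explicit check of homogeneity of each defining equation under the prescribed weights, with one line each for $D''$, the bracket, and $D'$.
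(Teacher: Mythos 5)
Your proof is correct and is essentially the paper's own argument: the paper simply writes out the $j$-th graded piece of $D''(\beta,\varphi)+[\beta,\varphi]$, namely $\dbar_E\varphi_j+[\beta_{j+1},\Phi]+\sum_{a+b=j}[\beta_a,\varphi_b]$, and of $D'(\beta,\varphi)$, namely $\p_E\beta_j+[\Phi^\ast,\varphi_{j-1}]$, and observes these scale by $\xi^{j+1}$ and $\xi^j$ respectively, which is exactly your homogeneity bookkeeping. One minor bookkeeping slip to fix when writing it up: it is the $(0,1)$-component that has its degree lowered by $1$ (via $\ad_\Phi$, since $\dbar_E\beta$ has no $(0,2)$-part on a Riemann surface) and the $(1,0)$-component whose degree is preserved (via $\dbar_E$, so $\dbar_E\varphi_j\in\Omega^{1,1}(\End_j(E))$, not $\End_{j-1}(E)$), not the other way around; with the indices realigned your weight count is exactly the paper's.
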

\begin{proof}
 The $j^{th}$-graded piece of $D''(\beta,\varphi)+[\beta,\varphi]$ is given by 
\[\bar\p_E\varphi_j+[\beta_{j+1},\Phi]+\sum\limits_{a+b=j}[\beta_a,\varphi_b]\]
and the $\C^*$-action \eqref{eq C* action on slice} scales this equation by $\xi^{j+1}.$ 
Similarly, the $j^{th}$-graded piece of $D'(\beta,\varphi)$ is given by 
$\p_E\beta_j+[\Phi^*,\varphi_{j-1}]$,
and the $\C^*$-action scales this by $\xi^j.$
\end{proof}

\begin{proof}[Proof of Theorem \ref{thm:kuranishi}]
The map \eqref{eqn:kuranishi} is clearly holomorphic. 
By Proposition \ref{prop:kuranishi1}, 
it is a  biholomorphism in a neighborhood of the origin. 
The $\CBbb^\ast$-action defined in \eqref{eq C* action on slice} preserves the slice and $\Hcal^1_+(\dbar_E,\Phi)$, and the Kuranishi map is equivariant. It follows that the local biholomorphism extends to a global one.
\end{proof}

We now come to an important technical result.
\begin{Proposition} \label{prop:good-gauge}
Assume $(\dbar_E,\Phi)$ is stable.
For each $$(\beta,\varphi)\in 
\Omega^{0,1}(N_+)\oplus \Omega^{1,0}(L\oplus N_+)$$ satisfing
$D''(\beta,\varphi)+[\beta,\varphi]=0$, there is a unique $f\in \Omega^0(N_+)$ such that the complex gauge transformation $g=\Ibold+f$ takes $(\bar\p_E+\beta,\Phi+\varphi)$ into the slice $\Scal^+_{(\dbar_E,\Phi)}$.
\end{Proposition}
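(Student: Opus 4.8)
The plan is to find $f$ by solving a fixed-point problem built from the two slice conditions. Write $g = \Ibold + f$ with $f \in \Omega^0(N_+)$. The gauge-transformed pair $(\dbar_E + \beta', \Phi + \varphi')$, with $\beta' + \varphi' = g^{-1}(D'' + \beta + \varphi)g - D''$, automatically satisfies the first slice equation $D''(\beta',\varphi') + [\beta',\varphi'] = 0$, since that equation just says the pair defines a Higgs bundle, and Higgs bundles are preserved by complex gauge transformations. So the only condition to arrange is $D'(\beta',\varphi') = 0$. Expanding $g^{-1}(D'' + \beta + \varphi)g$ in powers of $f$ and applying $D'$, the equation $D'(\beta',\varphi')=0$ becomes, to leading order, $(D')D''(f) = -D'(\beta + \varphi) + (\text{higher order in } f, \beta,\varphi)$. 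Here I use the Kähler identity $(D'')^\ast = -i[\Lambda, D']$: the operator $\Delta'' := D''(D'')^\ast = iD''D'\Lambda$ is the relevant Laplacian, and via $\star$-duality controlling $D' D''$ on $\Omega^0(N_+)$ amounts to inverting $\Delta''$ on the appropriate graded piece.

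First I would set up the correct functional-analytic framework: complete $\Omega^0(N_+)$ and the target space in suitable Sobolev norms, and observe that since $(\dbar_E,\Phi)$ is stable, $H^0(C_+(\dbar_E,\Phi)) = H^0(C(\dbar_E,\Phi)) \cap \Omega^0(N_+) = 0$, so the operator $f \mapsto D'D''(f)$ (equivalently the restriction of $\Delta''$ to the $N_+$-graded sector) is injective, hence invertible with bounded inverse $G$ onto its image. The grading is essential: $\beta \in \Omega^{0,1}(N_+)$, $\varphi \in \Omega^{1,0}(L \oplus N_+)$, and $\dbar_E$, $\ad_\Phi$, $\ad_{\Phi^\ast}$ all shift the $\Z$-grading in a controlled way, so the inhomogeneous term $D'(\beta + \varphi)$ and all the nonlinear corrections land in $N_+$-graded pieces where $G$ is defined — this is exactly why $f$ can be taken in $\Omega^0(N_+)$ rather than all of $\Omega^0(\fsl(E))$. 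I would then recast $D'(\beta',\varphi')=0$ as a fixed-point equation $f = \Phi_{\beta,\varphi}(f)$ where $\Phi_{\beta,\varphi}$ involves $G$ composed with the nonlinear remainder, and apply the implicit function theorem (or a contraction-mapping argument) to get existence and uniqueness of a small solution $f$ for $(\beta,\varphi)$ near $0$. Elliptic regularity then upgrades the Sobolev solution to a smooth $f$.

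For uniqueness in the stated (not merely local) form, I would use the $\C^\ast$-equivariance already exploited in the proof of Theorem \ref{thm:kuranishi}: the $\C^\ast$-action \eqref{eq C* action on slice} scales $N_+$-data with positive weights and the constructed $f$ transforms compatibly, so a solution at any $(\beta,\varphi)$ in the domain can be rescaled into the small neighborhood where the implicit function theorem applies, and uniqueness there propagates back. Alternatively, uniqueness follows directly from injectivity of the linearization $f \mapsto \Delta''(f)|_{N_+}$ together with the observation that the difference of two solutions satisfies a homogeneous linear elliptic equation with no kernel.

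The main obstacle I anticipate is verifying that the nonlinear remainder genuinely maps into the image of $G$ — i.e., that applying $D'$ to the gauge-transformed connection, after subtracting the linear term, produces something orthogonal to $\ker(D'D'')^\ast|_{N_+}$ — and more basically that every term in the expansion of $g^{-1}(D''+\beta+\varphi)g - D''$, and of $D'$ applied to it, respects the $N_+$-grading so the whole construction stays inside $\Omega^0(N_+)$. This is where the precise bookkeeping of which graded pieces $\beta$, $\varphi$, $[\beta,\varphi]$, $[f,\Phi]$, $[f,\beta]$, etc.\ occupy has to be done carefully; once the grading claims are in place, the analytic argument (boundedness of $G$, contraction estimates, elliptic regularity) is routine.
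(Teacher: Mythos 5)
Your reduction is on the right track (the integrability equation is indeed automatic, and the whole problem is the single equation $D'(\beta',\varphi')=0$, which on a Riemann surface lives in $\Omega^{1,1}(\slfrak(E))$, so after contracting with $-i\Lambda$ it becomes an equation in $\Omega^0$ whose linearization is the invertible operator $(D'')^\ast D''$; in particular your worry about orthogonality to a cokernel is moot). But as written there is a genuine gap: the Proposition is a \emph{global} statement, for every $(\beta,\varphi)$ in $\Omega^{0,1}(N_+)\oplus\Omega^{1,0}(L\oplus N_+)$ with $D''(\beta,\varphi)+[\beta,\varphi]=0$, while the implicit function theorem only produces $f$ for $(\beta,\varphi)$ in a neighborhood of the origin. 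You invoke the $\C^\ast$-equivariance of \eqref{eq C* action on slice} only to globalize \emph{uniqueness}; the globalization of \emph{existence} (rescale the data by $\xi$ small into the IFT neighborhood, solve there, then conjugate the solution back by $g_\xi$ from \eqref{eq fixed point hol gauge trans}) is never stated, and it is exactly the global existence that the paper needs (it feeds into Corollary \ref{cor:ph}, the surjectivity of $p_{\rH}$ onto the whole fiber). Moreover your ``alternative'' uniqueness argument is incorrect as stated: the equation is nonlinear, so the difference of two solutions does not satisfy the homogeneous linearized equation $(D'')^\ast D''(\delta f)=0$; it satisfies a linear equation with coefficients depending on both solutions, whose kernel is not obviously trivial without further input.

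The missing idea — which is the paper's entire proof and also repairs your uniqueness shortcut — is that the positivity of the grading makes the problem \emph{triangular}, not merely ``$N_+$-compatible''. Writing $f=f_1+\cdots+f_{\ell-1}$ with $f_j\in\Omega^0(\End_j(E))$, the grade-$j$ component of the gauge-fixed equation reads
\begin{equation*}
(D'')^\ast D''(f_j)\;=\;-2i\Lambda\bigl(\partial_E\beta+[\Phi^\ast,\varphi]\bigr)_j \ +\ \text{(terms involving only $f_k$, $k<j$)},
\end{equation*}
because $f$, $\beta$ and the nonlinear corrections all raise (or preserve) the grading. Since stability gives $\ker D''=\{0\}$ on $\Omega^0(\slfrak(E))$, the self-adjoint elliptic operator $(D'')^\ast D''$ is invertible, so one solves uniquely for $f_1,f_2,\dots$ by induction on $j$: each step is a \emph{linear} equation with known right-hand side. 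This yields existence and uniqueness for arbitrary $(\beta,\varphi)$, with no smallness hypothesis, no contraction mapping, and no equivariance bootstrapping; and gradewise induction is precisely what makes ``the difference of two solutions satisfies a homogeneous equation with no kernel'' true. Your analytic route can be completed (IFT plus the $\C^\ast$-rescaling applied to both existence and uniqueness), but it is strictly more work than the inductive argument, and in its present form it does not prove the statement.
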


\begin{proof}
Write $f$ in terms of its graded pieces, $f=f_1+\cdots + f_{\ell-1}$, where
$f_j\in \End_j (E)$.
Then the grading is additive under multiplication. 
By the phrase ``mod $j$'' we shall mean any expression involving only sections $f_k$ for $k<j$. 
For example, if we write: $g^{-1}=\Ibold+h_1+\cdots + h_{\ell-1}$ in terms of graded pieces, then $h_j=-f_j \mod j$. 
In the following,
suppose $R(f)$  is an algebraic expression in $f$ and its derivatives $\dbar_E f$, and such that  the $j$-th graded piece $R(f)_j$ depends only on $f_k$, $k\leq j$. We will say that $R(f)_j=S(f_j)\mod j$, where $S$ is another such algebraic expression,   if $R(f)_j-S(f_j)$ depends only on $f_k$, $k\leq j-1$. 

The strategy is to solve recursively for the graded pieces of $f_j$ of $f$, starting with $j=1$. 
With the conventions of the previous paragraph understood, 
the effect of the gauge transformation $g=\Ibold+f$ on the graded pieces of the data is:
\begin{align*}
\beta_j&\mapsto \beta_j+\dbar_E (f_j) \mod j \\
[\Phi^\ast, \varphi]_j&\mapsto [\Phi^\ast, \varphi]_j+\left([\Phi^\ast, [\Phi, f]]\right)_j\mod j \ .
\end{align*}
Hence, we wish to find $f$ with components $f_j$ solving the equation:
$$
\partial_E\dbar_E (f_j)+\left([\Phi^\ast, [\Phi, f]]\right)_j=-(\partial_E \beta+[\Phi^\ast, \varphi])_j \mod j
$$
But the left hand side is just $D'D''(f_j)\mod j$.
Contracting with $-\sqrt{-1}\Lambda$, and using \eqref{eqn:kahler},  this becomes
\begin{equation} \label{eqn:f}
(D'')^\ast D'' (f_j) =-2i\Lambda(\partial_E \beta+[\Phi^\ast, \varphi])_j \mod j~.
\end{equation}
By the assumption of stability, $\ker D''=\{0\}$ on $\Omega^0(\slfrak(E))$, and so  the operator $(D'')^\ast D''$
 is invertible. Now assuming we have solved for $f_k$, $k<j$,  \eqref{eqn:f} gives a unique solution for $f_j$. This completes the proof. 
 \end{proof}

\subsection{First variation of the harmonic metric}
Let $(\bar\p_E,\Phi)$ be a stable Higgs bundle. For this section, we denote  the harmonic metric
on $(\bar\p_E,\Phi)$ by $h_0$. For each $(\beta,\varphi)\in \Scal_{(\dbar_E,\Phi)}$, the Higgs bundle 
$(\dbar_E+\beta, \Phi+\varphi)$ also admits a harmonic metric in a neighborhood of the origin. We view this as a function $h$ from  $\Scal_{(\dbar_E,\Phi)}$ to the space $\Met(E)$ of hermitian metrics on $E$.  A standard fact is that $h$  is smooth in a 
neighborhood of the origin. The following is a generalization of  \cite[Thm.\ 3.5.1]{LabourieWentworthFuchLocus}. 
\begin{Proposition}\label{Prop first variation of metric}
  The derivative of the harmonic metric $h:\Scal_{(\dbar_E,\Phi)}\to \Met(E)$ vanishes at the origin.
\end{Proposition}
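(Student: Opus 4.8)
The plan is to differentiate the Hitchin equation $F_{(\dbar_E+\beta,\, h)} + [\Phi+\varphi, (\Phi+\varphi)^{\ast_h}] = 0$ with respect to the slice parameters $(\beta,\varphi)\in\Scal_{(\dbar_E,\Phi)}$ at the origin, and show the linearized equation forces the variation of the metric to vanish. Write $h = h_0 e^{s}$ where $s$ is a path of hermitian (with respect to $h_0$) endomorphisms of $E$ with $s(0)=0$; the goal is to prove $\dot s := \frac{d}{dt}\big|_{t=0} s = 0$ when we move along a curve $t\mapsto (t\beta, t\varphi)$ in the slice. Since $h_0$ solves the Hitchin equation for $(\dbar_E,\Phi)$, the zeroth-order term drops out, and collecting the first-order terms in $t$ yields a linear elliptic equation for $\dot s$ of the schematic form
\[
\Delta'' \dot s + (\text{terms linear in }\beta,\varphi) = 0,
\]
where $\Delta'' = (D'')^\ast D''$ is the Laplacian appearing in \eqref{eqn:kahler} and \eqref{eqn:f}. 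The key point is that the inhomogeneous terms coming from $\beta$ and $\varphi$ must vanish: this is exactly where the \emph{slice conditions} $D''(\beta,\varphi)+[\beta,\varphi]=0$ and $D'(\beta,\varphi)=0$ enter. More precisely, the first variation of the curvature term contributes $\partial_E^{h_0}\beta$ (and its adjoint), while the first variation of $[\Phi+\varphi,(\Phi+\varphi)^{\ast_{h}}]$ at $t=0$ contributes $[\varphi,\Phi^{\ast_{h_0}}] + [\Phi,\varphi^{\ast_{h_0}}]$ together with a $\dot s$-term $[\Phi,[\Phi^{\ast_{h_0}},\dot s]]$ of the same type. Contracting with $-i\Lambda$ and using the Kähler identities \eqref{eqn:kahler}, the inhomogeneous part is recognized as (a multiple of) $i\Lambda\big(\partial_E^{h_0}\beta + [\Phi,\varphi^{\ast_{h_0}}]\big)$ plus its conjugate, i.e.\ the $(D')$-type and $(D'')$-type operators applied to $(\beta,\varphi)$. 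Both vanish by the slice equations (the $D'(\beta,\varphi)=0$ condition and the harmonicity $D'(\beta,\varphi)=0$, together with $(D'')^\ast(\beta,\varphi)=0$ which follows from $D'(\beta,\varphi)=0$ via \eqref{eqn:kahler}). Hence $\dot s$ satisfies $\Delta''\dot s = 0$ (possibly with a zeroth-order term $[\Phi,[\Phi^{\ast_{h_0}},\dot s]]$ of the correct sign), so by the maximum principle / self-adjointness and positivity of the operator — the same invertibility used in the proof of Proposition \ref{prop:good-gauge} via stability, $\ker D'' = 0$ on $\Omega^0(\slfrak(E))$ — we conclude $\dot s = 0$.

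Concretely I would proceed in the following steps. First, set up the perturbed Hitchin equation with $h = h_0 e^{s}$ and expand to first order in $t$, carefully tracking how $\partial_E^{h}$, $\Phi^{\ast_h}$, and the curvature depend on $s$; this is the computation behind \cite[Thm.\ 3.5.1]{LabourieWentworthFuchLocus} and is mostly bookkeeping. Second, identify the inhomogeneous terms as the $D'$- and $(D'')^\ast$-images of $(\beta,\varphi)$ after contracting with $i\Lambda$, and invoke the slice conditions together with \eqref{eqn:kahler} to kill them. Third, argue that the remaining homogeneous linear operator acting on $\dot s$ — which is $(D'')^\ast D''$ plus a non-negative zeroth-order piece built from $[\Phi,\,\cdot\,]$ — is injective on $\Omega^0(\slfrak(E))$ by stability of $(\dbar_E,\Phi)$, exactly as in the proof of Proposition \ref{prop:good-gauge}. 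Finally, conclude $\dot s = 0$, which is the assertion that $dh$ vanishes at the origin (after noting that the differential of $h\mapsto h_0 e^{s}$ at $s=0$ is an isomorphism onto the tangent space of $\Met(E)$ at $h_0$).

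The main obstacle I anticipate is organizing the first-variation computation cleanly enough that the inhomogeneous terms visibly match $D'(\beta,\varphi)$ and $(D'')^\ast(\beta,\varphi)$ rather than some conjugated or sign-shifted variant; the hermitian adjoint $\Phi^{\ast_h}$ depends on the metric, so its variation produces a $\dot s$-coupled term $[\Phi,[\Phi^{\ast_{h_0}},\dot s]]$ and also a term $[\Phi,\varphi^{\ast_{h_0}}]$ (rather than $[\Phi,\varphi]$), and one must be careful that the latter, not $[\Phi^{\ast_{h_0}},\varphi]$, is what combines with $\partial_E^{h_0}\beta$ to form the slice operator. A secondary subtlety is the role of the real structure: the slice equations are stated for $(\beta,\varphi)$ with $\varphi\in\Omega^{1,0}$, while $\dot s$ is an endomorphism, so one should pass through the identifications \eqref{eqn:dr}–\eqref{eqn:cplx} and verify the reality of $\dot s$ is consistent. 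Once the linear equation is correctly assembled, the vanishing is immediate from stability, so the real work is entirely in the bookkeeping of the first step.
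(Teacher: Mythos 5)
Your proposal is correct and takes essentially the same route as the paper's proof: write $h=h_0e^{s}$ (the paper uses $h=h_0H$), differentiate the Hitchin equation \eqref{eqn:hitchin} along a slice direction, observe that the terms linear in $(\beta,\varphi)$ assemble into $D'(\beta,\varphi)$ and its adjoint and hence vanish by the slice (harmonicity) condition, and then conclude $\dot s=0$ from injectivity of the resulting Laplacian-type operator, which follows from stability. Two cosmetic slips do not affect the argument: the straight path $(t\beta,t\varphi)$ is not literally contained in the slice, so one should differentiate along any slice path with that tangent (as the paper does, which changes nothing at first order), and it is $[\varphi,\Phi^{\ast}]$, not $[\Phi,\varphi^{\ast}]$, that combines with $\partial_E\beta$ to form $D'(\beta,\varphi)$, the term $[\Phi,\varphi^{\ast}]$ pairing instead with $\dbar_E\beta^{\ast}$ in the adjoint term.
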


\begin{proof}
We will consider one parameter variations (in the variable $t$) in the direction $(\beta,\varphi)\in \Hcal^1(\dbar_E,\Phi)$ and use notation like $\dt \ubold$ to denote derivatives at $t=0$.
Let
$\ubold(t)\in \Scal_{(\dbar_E,\Phi)}$ be a one parameter family such that $\ubold(0)=0$ and $\dt \ubold=(\beta,\varphi)$.
Let $h(t)$ denote the harmonic metric for the Higgs bundle defined by $\ubold(t)$.
Write $h(t)=h_0H(t)$ for an invertible hermitian endomorphism valued function $H$, with $\det H(t)=1$ and  $H(0)=\Ibold$.
It suffices to show that $\dt H=0$, $H=H(t)$,  for one parameter  variations  in the direction $(\beta,\varphi)\in \Hcal^1(\dbar_E,\Phi)$. In the following, $\ast$ will denote adjoints with respect to $h_0$, whereas $\ast_h$ will denote the same for $h$. 
 Furthermore, $F_{(\dbar_E,h)}$ denotes the curvature of the Chern connection of $(E,\dbar_E)$ with respect to $h$, and 
$\partial_E:=\partial_E^{h_0}$.

We first claim that
\begin{equation} \label{eqn:partial-dot}
\stackrel{\sbt}{\widefrown{\partial_E^h}}\, =\partial_E(\dt H)-\beta^\ast\ .
\end{equation}
Indeed, let $\{e_i\}$ be a local $h_0$-unitary frame, and define
$$
h_{ij}=\langle e_i, e_j\rangle_{h}=\langle He_i, e_j\rangle_{h_0}\ .
$$
Then
$
\dt h_{ij}=\langle \dt He_i, e_j\rangle_{h_0}
$,
so that at $t=0$,
$$
\partial( \dt h_{ij})=\langle \partial_E(\dt He_i), e_j\rangle_{h_0}+ \langle \dt H e_i, \dbar_E e_j\rangle_{h_0}\ .
$$
On the other hand,  
\begin{align*}
\partial h_{ij}&= \langle \partial_E^{h} e_i, e_j\rangle_h+\langle e_i, \dbar_E e_j\rangle_h\\
&= \langle H \partial_E^h e_i, e_j\rangle_{h_0}+\langle H e_i, \dbar_E e_j\rangle_{h_0}\\
\partial(\dt h_{ij})&= \langle \dt H \partial_E e_i, e_j\rangle_{h_0}+ \langle H \stackrel{\sbt}{\widefrown{\partial_E^h}} e_i, e_j\rangle_{h_0}+ \langle \dt H e_i, \dbar_E e_j\rangle_{h_0}+\langle H e_i, \beta e_j\rangle_{h_0}~. \\
\end{align*}
Evaluating at $t=0$ and equating the two results,  we have
$$
\langle \partial_E(\dt H)e_i, e_j\rangle_{h_0}=\langle  \stackrel{\sbt}{\widefrown{\partial_E^h}} e_i, e_j\rangle_{h_0}+\langle H e_i, \beta e_j\rangle_{h_0}\ .
$$
This proves the claim. 

From $\Phi^{\ast_h}=H^{-1}\Phi^{\ast}H$ we also have (at $t=0$)
\begin{equation} \label{eqn:phi-dot}
   \stackrel{\sbt}{\widefrown{\Phi^{\ast_h}}       }\, =(\dt\Phi)^\ast +[\Phi^\ast, \dt H]\ .
\end{equation}
Combining \eqref{eqn:partial-dot} and \eqref{eqn:phi-dot}, 
\begin{align}
\begin{split} \label{eqn:dots}
 \stackrel{\sbt}{\widefrown{F_{(\dbar_E,h)}}}   &=(\dbar_E+\partial_E)(\stackrel{\sbt}{\widefrown{\dbar_E}}+ \stackrel{\sbt}{\widefrown{\partial_E^h}})=\partial_E\beta-\dbar_E\beta^\ast+\dbar_E\partial_E\dt H\ ,\\
\stackrel{\sbt}{\widefrown{[\Phi, \Phi^{\ast_h}]}}&=[\varphi, \Phi^\ast]+[\Phi, \varphi^\ast]+[\Phi, [\Phi^\ast, \dt H]]\ .
\end{split}
\end{align}

Since $h$ is the harmonic metric for $(\dbar_E,\Phi)$, we differentiate
\eqref{eqn:hitchin}  at $t=0$. Plugging into \eqref{eqn:dots}, we obtain
$$
0=D'(\beta,\varphi)-\{D'(\beta,\varphi)\}^\ast+D'' D'(\dt H)\ .
$$
Since $(\beta,\varphi)\in \Hcal^1(\dbar_E,\Phi)$, the first two terms vanish, and so we have 
$$
D''D'(\dt H)=0\ \Longrightarrow\ D'(\dt H)=0\ \Longrightarrow\ D''(\dt H)=0\ ,
$$
since $\dt H$ is hermitian. But  $(\dbar_E,\Phi)$ is stable and $\dt H$ is traceless, so
 this implies $\dt H=0$.
\end{proof}

\section{Stratifications} \label{sec:strata}

\subsection{The stable manifold of a VHS in $\Mm_{\rH}$}
For a fixed point $[(\bar\p_0,\Phi_0)]\in \VHS_\alpha\subset \Mm_\rH$, the \emph{stable manifold} is defined by
\[\rW^0_\alpha(\bar\p_0,\Phi_0)=\left\{[(\bar\p_E,\Phi)]\in\Mm_\rH~|~\lim\limits_{\xi\to0}[(\bar\p_E,\xi\Phi)]=[(\bar\p_0,\Phi_0)]\right\}~.\]
Since stability is an open condition, $\rW^0_\alpha(\bar\p_0,\Phi_0)$ is  a smooth submanifold in the nonsingular locus of $\Mm_{\rH}$ when $(\bar\p_0,\Phi_0)$ is a stable Higgs bundle. Moreover, 
there is a relationship between the grading on the tangent space at a fixed point and its stable manifold via Morse theory. 
Namely, consider the function defined by the $\rL^2$-norm of the Higgs field 
\[f:\xymatrix@R=0em{\Mm_{\rH}\ar[r]&\R^{\geq 0}\\[(\bar\p_E,\Phi)]\ar@{|->}[r]&||\Phi||^2=i\int\limits_X\Tr(\Phi\wedge\Phi^*)}~,\]
where the adjoint is defined with respect to the harmonic metric solving the self duality equations. Using Uhlenbeck compactness, Hitchin showed that $f$ is a proper function. Moreover, on the smooth locus, $f$ is the moment map for the restriction of the $\C^*$-action to $\sU(1)$, and hence a Morse-Bott function \cite{selfduality}. For smooth points, the Hessian of $f$ has eigenvalue $j$ on the subspace $\Hcal^1_j(\dbar_0,\Phi_0)$ (see \cite[Section 8]{liegroupsteichmuller}\footnote{In \cite{liegroupsteichmuller}, Hitchin proves that the Hessian of $f$ has eigenvalue $j$ on the subspace $\Hcal^1_j(\dbar_0,\Phi_0)$. We are using the opposite grading as Hitchin, so the Hessian of $f$ has eigenvalue $j$ on the subspace $\Hcal^1_j(\dbar_0,\Phi_0)$.}). 
Using the notation of \eqref{eq HH1+ notation}, this implies that 
\[T_{[(\bar\p_0,\Phi_0)]}\rW^0_\alpha(\bar\p_0,\Phi_0)\cong\Hcal^1_{+}(\dbar_0,\Phi_0)~,\]
and by Lemma \ref{lem:dimension} the latter is half dimensional. 
We record these results in the following
\begin{Lemma}\label{Lemma dim of stable man}
    If $[(\bar\p_0,\Phi_0)]\in\VHS_\alpha$ is a stable fixed point of the $\C^*$-action, then $\rW^0_\alpha(\bar\p_0,\Phi_0)$ is a  smooth submanifold of dimension $\haf\dim(\Mm_\rH).$
\end{Lemma}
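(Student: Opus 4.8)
The plan is to combine the Morse-theoretic description of $\rW^0_\alpha(\bar\p_0,\Phi_0)$ already assembled in the paragraph preceding the Lemma with the index computation of Lemma \ref{lem:dimension}. Since the statement to be proved merely records conclusions that have just been justified, the ``proof'' is really a matter of assembling the pieces in the correct logical order and citing the correct prior results.

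First I would invoke the stability hypothesis to locate $[(\bar\p_0,\Phi_0)]$ in the nonsingular locus of $\Mm_{\rH}$, so that all of the deformation theory of Section \ref{sec: def theory section} applies and $\Mm_{\rH}$ is a smooth complex manifold near this point. Next, I would recall that the $\rL^2$-norm of the Higgs field $f$ is, by Hitchin's work \cite{selfduality}, a proper Morse-Bott function whose gradient flow (for the metric induced by the hyperk\"ahler structure) realizes the $\C^\ast$-flow, so that $\rW^0_\alpha(\bar\p_0,\Phi_0)$ is exactly the stable manifold of the critical submanifold $\VHS_\alpha$ through $[(\bar\p_0,\Phi_0)]$. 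Since stability is an open condition, every nearby point along the downward flow remains stable, hence $\rW^0_\alpha(\bar\p_0,\Phi_0)$ is a smooth locally closed submanifold of the nonsingular locus.

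Then I would identify the tangent space: by the eigenvalue computation for the Hessian of $f$ on $\Hcal^1_j(\dbar_0,\Phi_0)$ (the $j$-graded piece of $T_{[(\bar\p_0,\Phi_0)]}\Mm_{\rH}$), the stable directions are precisely those with positive grading, giving
\[
T_{[(\bar\p_0,\Phi_0)]}\rW^0_\alpha(\bar\p_0,\Phi_0)\cong \Hcal^1_+(\dbar_0,\Phi_0)\ .
\]
Finally, Lemma \ref{lem:dimension} says $\Hcal^1_+(\dbar_0,\Phi_0)$ has complex dimension $\tfrac12\dim\Mm_{\rH}$, which gives the dimension count and completes the proof.

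The only genuine subtlety — and the step I would flag as the main obstacle, though it is already handled by the preceding discussion and the cited references — is the sign/grading convention: one must be careful that the ``opposite grading to Hitchin'' used in this paper still yields positivity of the Hessian on $\Hcal^1_+$, i.e.\ that the subspace spanned by positive-weight directions is indeed the stable (not unstable) one. Once the convention is pinned down consistently with \cite{liegroupsteichmuller} and with the definition of $\rW^0_\alpha$ via $\xi\to 0$, everything else is formal. I would therefore present the proof simply as: apply Hitchin's Morse-Bott analysis, use the Hessian eigenvalue statement to get the tangent space isomorphism, and quote Lemma \ref{lem:dimension} for the dimension.
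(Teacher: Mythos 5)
Your proposal is correct and follows essentially the same route as the paper: the text immediately preceding the Lemma argues exactly as you do, using openness of stability for smoothness, Hitchin's proper Morse--Bott function $f=\|\Phi\|^2$ with Hessian eigenvalue $j$ on $\Hcal^1_j(\dbar_0,\Phi_0)$ to identify $T_{[(\bar\p_0,\Phi_0)]}\rW^0_\alpha(\bar\p_0,\Phi_0)\cong\Hcal^1_+(\dbar_0,\Phi_0)$, and then Lemma \ref{lem:dimension} for the half-dimensionality. The grading-convention caveat you flag is also the one the paper addresses in its footnote to \cite{liegroupsteichmuller}, so nothing is missing.
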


\begin{Proposition}\label{prop stable man of VHS normal form}
Let $[(\bar\p_0,\Phi_0)]\in\VHS_\alpha$ be a stable variation of Hodge structure, and $[(\bar\p_E,\Phi)]\in\Mm_{\rH}$. Then $[(\bar\p_E,\Phi)]\in\rW^0_\alpha(\bar\p_0,\Phi_0)$ if and only if after a gauge transformation
\[\xymatrix{\Phi-\Phi_0\in \Omega^{1,0}(L\oplus N^+)&\text{and}&\bar\p_E-\bar\p_0\in \Omega^{0,1}(N^+)},\]
where $N_+$ and $L$ are defined in \eqref{eq N+ notation}.
\end{Proposition}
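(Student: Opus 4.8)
The plan is to prove both implications by means of the explicit $\C^*$-family of gauge transformations adapted to the $\VHS$ splitting $E=E_1\oplus\cdots\oplus E_\ell$ of Proposition \ref{Prop: fixedpoints of C* action}, together with the Hodge slice of Section \ref{sec:hodge-slice} as a local model near $[(\bar\p_0,\Phi_0)]$. Fix the gauge transformation $g(\xi)$ acting on $E_m$ as multiplication by the appropriate power of $\xi$ so that $(\bar\p_0,\Phi_0)\cdot g(\xi)=(\bar\p_0,\xi\Phi_0)$; then conjugation by $g(\xi)$ scales $\End_j(E)$ by $\xi^{-j}$, in particular fixing $\bar\p_0$ and multiplying $\Phi_0$ by $\xi$.

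For the ``if'' direction, suppose $\bar\p_E=\bar\p_0+\beta$ with $\beta=\sum_{j\geq 1}\beta_j\in\Omega^{0,1}(N_+)$ and $\Phi=\Phi_0+\varphi$ with $\varphi=\sum_{j\geq 0}\varphi_j\in\Omega^{1,0}(L\oplus N_+)$. A direct computation gives
\[
(\bar\p_E,\xi\Phi)\cdot g(\xi)^{-1}=\Bigl(\bar\p_0+\sum_{j\geq 1}\xi^{j}\beta_j\,,\ \Phi_0+\sum_{j\geq 0}\xi^{j+1}\varphi_j\Bigr)\ ,
\]
in which every exponent of $\xi$ is positive, so the right-hand side converges to $(\bar\p_0,\Phi_0)$ as $\xi\to 0$. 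Hence $\lim_{\xi\to 0}[(\bar\p_E,\xi\Phi)]=[(\bar\p_0,\Phi_0)]$, i.e.\ $[(\bar\p_E,\Phi)]\in\rW^0_\alpha(\bar\p_0,\Phi_0)$.

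For the ``only if'' direction, first observe that the computation above, together with the equivariance of the slice equations under \eqref{eq C* action on slice} (cf.\ the Lemma preceding Theorem \ref{thm:kuranishi}) and the uniqueness of the slice representative near the origin (Proposition \ref{prop:slice1}), shows that in the slice chart the $\C^*$-action on $\Mm_\rH$ near $[(\bar\p_0,\Phi_0)]$ is intertwined with the linear action \eqref{eq C* action on slice}. Now assume $\lim_{\xi\to 0}[(\bar\p_E,\xi\Phi)]=[(\bar\p_0,\Phi_0)]$. The orbit map $\xi\mapsto[(\bar\p_E,\xi\Phi)]$ extends holomorphically across $\xi=0$, so for $\xi_1$ small it maps the disk $\{\,|\xi|\leq\xi_1\,\}$ into the slice chart, where it is represented by a holomorphic family $s(\xi)=(\beta(\xi),\varphi(\xi))\in\Scal_{(\bar\p_0,\Phi_0)}$ with $s(0)=0$. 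Applying the intertwining with \eqref{eq C* action on slice} for $|\mu|=1$ and using uniqueness of the representative, one gets $\beta_j(\mu\xi)=\mu^{j}\beta_j(\xi)$ and $\varphi_j(\mu\xi)=\mu^{j+1}\varphi_j(\xi)$; comparing Taylor coefficients forces $\beta_j(\xi)=c_j\xi^{j}$ and $\varphi_j(\xi)=d_j\xi^{j+1}$, so holomorphy at $0$ gives $\beta_j\equiv 0$ for $j\leq 0$ and $\varphi_j\equiv 0$ for $j<0$. Therefore $(\bar\p_E,\xi_1\Phi)$ is gauge-equivalent to $(\bar\p_0+\beta(\xi_1),\Phi_0+\varphi(\xi_1))$ with $\beta(\xi_1)\in\Omega^{0,1}(N_+)$ and $\varphi(\xi_1)\in\Omega^{1,0}(L\oplus N_+)$. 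Finally, rescaling the Higgs field by $\xi_1^{-1}$ and then conjugating by $g(\xi_1)$ returns this to a gauge representative of $(\bar\p_E,\Phi)$; since $g(\xi_1)$-conjugation merely rescales each graded piece without mixing degrees, the conditions $\beta\in\Omega^{0,1}(N_+)$ and $\varphi\in\Omega^{1,0}(L\oplus N_+)$ persist, so $(\bar\p_E,\Phi)$ is gauge-equivalent to a pair of the asserted form.

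The step I expect to be the main obstacle is the identification, in the ``only if'' direction, of the $\C^*$-action near the fixed point with the linear model \eqref{eq C* action on slice}: the negative-weight directions a priori leave every small slice chart, so one must use holomorphy of the orbit map across $\xi=0$ together with the compact part $|\mu|=1$ of the action to pin down the weights before letting $\xi\to 0$, and then keep careful track of the rescaling by $g(\xi_1)$ at the end. The ``if'' direction and the weight/index bookkeeping are routine given the machinery already developed in Sections \ref{sec:moduli} and \ref{sec: def theory section}.
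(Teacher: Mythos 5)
Your proof is correct, but your ``only if'' direction takes a genuinely different route from the paper's. The paper first identifies the stable manifold with $p_{\rH}(\Scal^+_{(\bar\p_0,\Phi_0)})$ locally near the fixed point by a dimension count: it invokes Hitchin's Morse--Bott theory for $f=\|\Phi\|^2$ to identify $T_{[(\bar\p_0,\Phi_0)]}\rW^0_\alpha(\bar\p_0,\Phi_0)$ with $\Hcal^1_+(\bar\p_0,\Phi_0)$ (Lemma \ref{Lemma dim of stable man}) together with the index computation of Lemma \ref{lem:dimension}, so that the inclusion coming from the ``if'' computation must be an equality of half-dimensional submanifolds; it then extends from the chart to all of $\rW^0_\alpha(\bar\p_0,\Phi_0)$ exactly as you do, by flowing into the chart with the $\C^*$-action and undoing the rescaling with $g_\xi^{-1}$. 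You instead linearize the $\C^*$-action in the slice chart --- the intertwining with \eqref{eq C* action on slice} does hold on all of $\Scal_{(\bar\p_0,\Phi_0)}$, since the homogeneity computation in the lemma preceding Theorem \ref{thm:kuranishi} makes no use of positivity of the grading --- extend the orbit map holomorphically across $\xi=0$, and use circle-equivariance plus comparison of Taylor coefficients to kill the nonpositive-weight components: the classical Bia{\l}ynicki-Birula linearization argument. What this buys is a more self-contained proof of the local statement, with no appeal to the Morse-theoretic identification of the tangent space of the stable manifold or to the half-dimensionality of $\Hcal^1_+$, and it sidesteps the paper's rather terse ``equality because the dimensions agree'' step; the price is that you must know the slice representative $s(\xi)$ depends holomorphically on $\xi$ (i.e.\ that $p_{\rH}$ is a biholomorphism, not merely a homeomorphism, near the origin --- true at a stable point, and used implicitly by the paper as well), and you need the restriction to $|\mu|=1$ so that the equivariance stays inside the injectivity neighborhood, which you correctly flag and handle. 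One cosmetic correction: holomorphy at $\xi=0$ alone only kills the components with strictly negative exponent; the vanishing of $\beta_0$ (weight $0$) and of $\varphi_{-1}$ (exponent $0$) uses $s(0)=0$, which you have assumed, so your sentence ``holomorphy at $0$ gives $\beta_j\equiv 0$ for $j\leq 0$ and $\varphi_j\equiv 0$ for $j<0$'' should be read as ``holomorphy at $0$ together with $s(0)=0$.''
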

\begin{proof}
    First suppose $\varphi=\Phi-\Phi_0\in \Omega^{1,0}(L\oplus N^+)$ and $\beta=\bar\p_E-\bar\p_0\in \Omega^{0,1}(N^+)$, with respect to the grading we write 
    \begin{equation*}
        \label{eq decomp alpha phi}\xymatrix{\varphi=\sum\limits_{j=0}^{\ell-1}\varphi_j &\text{and}&\beta=\sum\limits_{j=1}^{\ell-1}\beta_j},
    \end{equation*}
    where $\varphi_j\in\Omega^{1,0}(\End_j(E))$ and $\beta_j\in\Omega^{0,1}(\End_j(E))$. Consider the gauge transformations 
\begin{equation}
    \label{eq fixed point hol gauge trans}g_\xi=\mtrx{\xi^a\Ibold_{E_1}&\\&\xi^{a-1}\Ibold_{E_2}\\&&\ddots\\&&&\xi^{a-\ell+1}\Ibold_{E_\ell}}~,
\end{equation}
where $\sum\limits_{j=0}^{\ell-1}\rk(E_{j+1})(a-j)=0.$
The gauge transformation $g_\xi$ has $\det(g_\xi)=1$ and acts on $(\bar\p_E,\xi\Phi)$ as
\[(\bar\p_E,\xi\Phi)\cdot g_\xi=(\bar\p_0+\sum\limits_{j=1}^{\ell-1}\xi^{j}\beta_j~,~\Phi_0+\sum\limits_{j=0}^{\ell-1}\xi^{j+1}\varphi_j)~.
\]
Thus, we have $\lim\limits_{\xi\to 0}[(\bar\p_E,\xi\Phi)]=[(\bar\p_0,\Phi_0)].$

For the other direction, recall that the map $p_{\rH}$ from \eqref{eqn:p} is a diffeomorphism $\Scal_{(\dbar_0,\Phi_0)}$ 
onto an open neighborhood $\Uu\subset\Mm_\rH$ of $[(\bar\p_0,\Phi_0)].$
Using the smooth splitting $E_1\oplus\cdots\oplus E_\ell$ we can decompose $\beta\in\Omega^{0,1}(\End(E))$ and $\varphi\in\Omega^{1,0}(\End(E))$ as $\beta=\sum\limits_{j=1-\ell}^{\ell-1}\beta_j$ and $\varphi=\sum\limits_{j=1-\ell}^{\ell-1}\varphi_j$. 
The $\BB$-slice $\Scal^+_{(\dbar_0,\Phi_0)}\subset \Scal_{(\dbar_0,\Phi_0)}$ is (see Definition \ref{def:BBslice}):
\[\Scal_{(\dbar_0,\Phi_0)}^+=\biggl\{(\beta,\varphi)\in\Scal_{(\dbar_0,\Phi_0)} \mid \beta=\sum\limits^{\ell-1}_{j=1}\beta_j~\text{and}~\varphi=\sum\limits_{j=0}^{\ell-1}\varphi_j\biggr\}~.\]
 By the above argument the portion of $\rW^0_\alpha(\bar\p_0,\Phi_0)$ in $p_{\rH}(\Scal_{(\dbar_0,\Phi_0)})$ is contained in $p_{\rH}(\Scal_{(\dbar_0,\Phi_0)}^+)$:
\[\rW^0_\alpha(\bar\p_0,\Phi_0)\cap p_{\rH}(\Scal_{(\dbar_0,\Phi_0)})\subset p_{\rH}(\Scal_{(\dbar_0,\Phi_0)}^+).\]
In fact, the above inclusion must be an equality, since by Lemmas \ref{lem:dimension} and \ref{Lemma dim of stable man},  $\dim(\Ss_+)=\dim(\rW^0_\alpha(\bar\p_0,\Phi_0))$. 

We now extend this to all of $\rW^0_\alpha(\bar\p_0,\Phi_0).$ For each point $[(\bar\p_E,\Phi)]\in \rW^0_\alpha(\bar\p_0,\Phi_0)$ there exists $\xi\in\C^*$ so that $[(\bar\p_E,\xi\Phi)]\in \Uu=p_{\rH}(\Scal_{(\dbar_0,\Phi_0)}).$ In fact, $[(\bar\p_E,\xi\Phi)]\in p_{\rH}(\Scal_{(\dbar_0,\Phi_0)}^+),$ so there is a representative $(\bar\p_E,\xi\Phi)$ of $[(\bar\p_E,\xi\Phi)]$ such that
\[(\bar\p_E,\xi\Phi)=(\bar\p_0+\sum\limits_{j=1}^\ell\beta_j~,~\Phi_0+\sum\limits_{j=0}^\ell\varphi_j)~.\]
A representative of $[(\bar\p_E,\Phi)]$ is given by 
\[(\bar\p_E,\Phi)=(\bar\p_0+\sum\limits_{j=1}^\ell\beta_j~,~\xi^{-1}\Phi_0+\sum\limits_{j=0}^\ell\xi^{-1}\varphi_j)~.\]
Using the inverse of the holomorphic gauge transformation $g_\xi$ from \eqref{eq fixed point hol gauge trans} we have 
\[(\bar\p_0+\sum\limits_{j=1}^\ell\beta_j~,~\xi^{-1}\Phi_0+\sum\limits_{j=0}^\ell\xi^{-1}\varphi_j)\cdot g_\xi^{-1}=(\bar\p_0+\sum\limits_{j=1}^\ell\xi^{-j}\beta_j~,~\Phi_0+\sum\limits_{j=0}^\ell\xi^{-j-1}\varphi_j)~.\]
Thus, every $[(\bar\p_E,\Phi)]\in \rW^0_\alpha(\bar\p_0,\Phi_0)$ is gauge equivalent to a Higgs bundle $(\bar\p_E,\Phi)$ with $\bar\p_E-\bar\p_0\in\Omega^{0,1}(N_+)$ and $\Phi-\Phi_0\in\Omega^{1,0}(L\oplus N_+).$
\end{proof}

\begin{Corollary} \label{cor:ph}
Suppose $[(\dbar_E,\Phi)]\in \VHS_\alpha$ is stable. Then
the map $p_{\rH}$ in \eqref{eqn:p} gives a biholomorphism $\Scal^+_{(\dbar_E,\Phi)}\isorightarrow \rW^0_\alpha(\dbar_E,\Phi)$.
\end{Corollary}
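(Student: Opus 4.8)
The plan is to assemble the statement from three ingredients already established in the excerpt: Theorem \ref{thm:kuranishi} (the Kuranishi map is a global biholomorphism $\Scal^+_{(\dbar_E,\Phi)} \isorightarrow \Hcal^1_+(\dbar_E,\Phi)$), Proposition \ref{prop stable man of VHS normal form} (the normal-form description of $\rW^0_\alpha(\dbar_E,\Phi)$ up to gauge), and Proposition \ref{prop:good-gauge} (uniqueness of the gauge transformation of the form $\Ibold + f$, $f \in \Omega^0(N_+)$, bringing a solution of $D''(\beta,\varphi)+[\beta,\varphi]=0$ into the $\BB$-slice). First I would recall that $p_{\rH}$ restricted to $\Scal^+_{(\dbar_E,\Phi)}$ is holomorphic (it is the restriction of the holomorphic map from Lemma \ref{lem:slice}/\eqref{eqn:p}) and that by Proposition \ref{prop:slice1} it is a local homeomorphism near the origin onto a neighborhood of $[(\dbar_E,\Phi)]$ in $\Mm_{\rH}$; the composition with the Kuranishi map identifies the slice with a vector space (Theorem \ref{thm:global-slice}(1)), so $\Scal^+_{(\dbar_E,\Phi)}$ is a smooth half-dimensional complex manifold.

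Next I would check that $p_{\rH}$ maps $\Scal^+_{(\dbar_E,\Phi)}$ \emph{into} $\rW^0_\alpha(\dbar_E,\Phi)$. This is exactly the forward direction of Proposition \ref{prop stable man of VHS normal form}: a point of the $\BB$-slice is a pair $(\beta,\varphi) \in \Omega^{0,1}(N_+)\oplus\Omega^{1,0}(L\oplus N_+)$, which is the normal form there (with $\beta = \bar\p_E - \bar\p_0 \in \Omega^{0,1}(N_+)$, $\varphi = \Phi - \Phi_0 \in \Omega^{1,0}(L\oplus N_+)$), and the explicit diagonal gauge transformations $g_\xi$ of \eqref{eq fixed point hol gauge trans} exhibit $\lim_{\xi\to 0}[(\bar\p_E, \xi\Phi)] = [(\bar\p_0,\Phi_0)]$. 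So the image lies in the stable manifold. Surjectivity onto $\rW^0_\alpha(\dbar_E,\Phi)$ is the other direction of Proposition \ref{prop stable man of VHS normal form}: every class in the stable manifold has a representative with $\bar\p_E - \bar\p_0 \in \Omega^{0,1}(N_+)$ and $\Phi - \Phi_0 \in \Omega^{1,0}(L\oplus N_+)$, hence — since such a representative automatically satisfies the integrability equation $D''(\beta,\varphi)+[\beta,\varphi]=0$ and (after the good gauge of Proposition \ref{prop:good-gauge}) also $D'(\beta,\varphi)=0$ — lies in $p_{\rH}(\Scal^+_{(\dbar_E,\Phi)})$.

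For injectivity I would argue that if two points $(\beta_1,\varphi_1), (\beta_2,\varphi_2) \in \Scal^+_{(\dbar_E,\Phi)}$ map to the same class in $\Mm_{\rH}$, then there is a complex gauge transformation $g$ with $g \cdot (\bar\p_E+\beta_1, \Phi+\varphi_1) = (\bar\p_E+\beta_2, \Phi+\varphi_2)$; since both sides lie in $\Scal^+_{(\dbar_E,\Phi)}$, the uniqueness clause of Proposition \ref{prop:good-gauge} (the gauge bringing a solution of the integrability equation into the $\BB$-slice is unique of the form $\Ibold+f$) forces $g = \Ibold$ after normalizing, hence the two points agree — here I would need to note that the stabilizer of a stable Higgs bundle is trivial (scalars only) to rule out the ambiguity, combined with the grading constraint $f \in \Omega^0(N_+)$. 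Finally, $p_{\rH}$ is a holomorphic bijection between smooth complex manifolds of the same dimension which is a local homeomorphism near the base point; to upgrade ``bijection'' to ``biholomorphism'' globally I would either invoke $\C^*$-equivariance (the $\C^*$-action of \eqref{eq C* action on slice} on the slice is intertwined by $p_{\rH}$ with the Hitchin $\C^*$-action on $\rW^0_\alpha$, and every point flows into the neighborhood of the fixed point where $p_{\rH}$ is already known to be biholomorphic), exactly as in the proof of Theorem \ref{thm:kuranishi}, or note that a holomorphic bijection of complex manifolds is automatically biholomorphic.

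The main obstacle is the global injectivity/properness: Proposition \ref{prop:slice1} only gives a local homeomorphism, and one must genuinely use the rigidity from Proposition \ref{prop:good-gauge} together with the $\C^*$-equivariant rescaling to propagate the local statement to all of $\Scal^+_{(\dbar_E,\Phi)}$ — making sure that the gauge transformation realizing an identification in $\Mm_{\rH}$ can always be chosen in the restricted (unipotent, $N_+$-graded) form so that uniqueness applies. The rest is bookkeeping with the grading and the normal-form lemma.
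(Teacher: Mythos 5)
Your proposal is correct and follows essentially the same route as the paper: containment and surjectivity via the two directions of Proposition \ref{prop stable man of VHS normal form} together with Proposition \ref{prop:good-gauge}, global injectivity by conjugating with the rescaling gauge transformations $g_\xi$ of \eqref{eq fixed point hol gauge trans} to move any gauge-equivalent pair into the neighborhood of the origin where Proposition \ref{prop:slice1} applies, and then holomorphicity upgrading the bijection to a biholomorphism. One caveat, which you partly flag yourself: your first injectivity argument does not work as stated, because the uniqueness clause of Proposition \ref{prop:good-gauge} only concerns gauge transformations of the special form $\Ibold+f$ with $f\in\Omega^0(N_+)$, and an arbitrary $g\in\Gcal(E)$ identifying two points of $\Scal^+_{(\dbar_E,\Phi)}$ need not be of that form (the Higgs-side proposition, unlike its de Rham analogue \ref{prop:good-gauge2}, has no such rigidity statement); it is precisely the $\CBbb^\ast$-equivariant rescaling you offer as a fallback that closes this, and that is the argument the paper actually uses.
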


\begin{proof}
We claim that $p_{\rH}$ is injective. Indeed, this is true in a neighborhood of the origin by Proposition \ref{prop:slice1}. 
In general,
suppose
 $(\beta_1,\varphi_1), (\beta_2,\varphi_2)\in \Scal^+_{(\dbar_E,\Phi)}$ are such that 
$$
(\dbar_E+\beta_1, \Phi+\varphi_1)\cdot g=(\dbar_E+\beta_2, \Phi+\varphi_2)
$$
for some $g\in\Gcal(E)$. Let $\xi\in \CBbb^\ast$ and consider $g_\xi$ from \eqref{eq fixed point hol gauge trans}. Then
as in the proof of Proposition \ref{prop stable man of VHS normal form} we have
$$
(\bar\p_E+\sum\limits_{j=1}^\ell\xi^{j}\beta_{1,j}~,~\Phi+\sum\limits_{j=0}^\ell\xi^{j+1}\varphi_{1,j})\cdot (g_\xi^{-1}\circ g\circ g_\xi)=(\bar\p_E+\sum\limits_{j=1}^\ell\xi^{j}\beta_{2,j}~,~\Phi+\sum\limits_{j=0}^\ell\xi^{j+1}\varphi_{2,j})\ .
$$
For $\xi$ sufficiently small, this is a gauge equivalence between points in a neighborhood of the origin in $\Scal^+_{(\dbar_E,\Phi)}$. Since $p_{\rH}$ is injective there, we conclude that
$(\beta_1,\varphi_1)= (\beta_2,\varphi_2)$, and this proves injectivity of $p_H$ on all of $\Scal^+_{(\dbar_E,\Phi)}$.  Surjectivity of $p_{\rH}$ follows from Propositions \ref{prop stable man of VHS normal form} and \ref{prop:good-gauge}. Since $p_{\rH}$ is in addition holomorphic, the result follows.
\end{proof}
\begin{Remark}\label{rmk: generalization of Hitchin param}
    Note that Corollary \ref{cor:ph} generalizes the parameterization of a Hitchin section from Example \ref{ex:fuchsian-slice} to the fibers of every smooth strata.
\end{Remark}

\subsection{The partial oper stratification of $\Mm_{\rdR}$}
Let $\Ecal$ be a bundle with holomorphic connection $\nabla$. We write
 $(\Ee, \nabla)=(E,\bar\p_E,\nabla)$.
Recall that $\Ecal$ is polystable if and only if the $\C^*$-action from \eqref{eq c^* action on Hod} has limit 
\[\lim\limits_{\xi\to0}[(\xi,\bar\p_E,\xi \nabla)]=[(\bar\p_E,0)]~.\]

At the other extreme, suppose $\Ecal$ is maximally unstable among bundles with  holomorphic connections.
Then (up to a twist) the successive quotients $\Ecal_j$ in the Harder-Narasimhan filtration of $\Ecal$ are line bundles of the form $\Ecal_j=K^{\frac{n+1}{2}-j}$, and $\nabla$ necessarily induces the tautological isomorphism $\Ecal_j\simeq \Ecal_{j+1}\otimes K$ (i.e.\ $(\Ecal,\nabla)$ is an \emph{oper}).
In this case the limit on the left hand side above is a Fuchsian point \eqref{eq VHS oper hitchin section}.

In general, however,  a  Higgs bundle induced from the Harder-Narasimhan filtration of $(\Ecal,\nabla)$ in this way will not be semistable.
Nevertheless, in  \cite{SimpsonDeRhamStrata} an iterative process is described which produces a semistable Higgs bundle, and identifies the limit above.

A  filtration $\Fcal$ by holomorphic subbundles $\Fcal_1\subset\dots\subset \Fcal_\ell=\Ee$ is called \emph{Griffiths transverse} if 
$\nabla(\Fcal_j)\subset \Fcal_{j+1}\otimes K$,
for all $1\leq j\leq \ell-1$.
Denote the associated graded of such a filtration by 
\[\Gr\Ecal=(E,\bar\p_{\mathrm{Gr} E})=\Ee_1\oplus \cdots\oplus \Ee_\ell,\] where $\Ee_j=\Fcal_j/\Fcal_{j-1}\ .$ 
The Griffiths transverse connection induces an $\Ocal_X$-linear map on the associated graded:
$$\nabla: \Ecal_j=\Fcal_j/\Fcal_{j-1}\to \Ecal_{j+1}\otimes K=(\Fcal_{j+1}/\Fcal_{j})\otimes K$$ which we denote by $\Phi_\Fcal$. 
Thus, associated to a Griffiths transverse filtration $\Fcal$, there is a Higgs bundle $(E,\bar\p_{\mathrm{Gr} E},\Phi_\Fcal)$, where 
$\Phi_\Fcal\in\bigoplus\limits_{j=1}^{\ell-1}H^0(\Hom(\Ee_j,\Ee_{j+1})\otimes K)$.
\begin{Lemma}\label{Lem identifying limit}
    If  $(E,\bar\p_{\mathrm{Gr} E},\Phi_\Fcal)$ is semistable then 
$$ \lim\limits_{\xi\to0}[(\xi,\bar\p_E,\xi \nabla)]=[(\bar\p_{\mathrm{Gr} E},\Phi_\Fcal)]\ .
$$
\end{Lemma}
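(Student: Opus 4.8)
The plan is to exhibit an explicit one-parameter family of complex gauge transformations that, applied to the $\lambda$-connection $(\xi,\bar\p_E,\xi\nabla)$, brings it into a form whose $\xi\to 0$ limit is manifestly $(\bar\p_{\mathrm{Gr}E},\Phi_\Fcal)$, and then invoke the separatedness/properness of $\Mm_\Hod$ together with semistability of $(\bar\p_{\mathrm{Gr}E},\Phi_\Fcal)$ to conclude that the limit in $\Mm_\Hod$ is the claimed point. First I would choose a $C^\infty$-splitting $E=E_1\oplus\cdots\oplus E_\ell$ refining the filtration $\Fcal_j=E_1\oplus\cdots\oplus E_j$; with respect to this splitting $\bar\p_E$ is block upper triangular (since the $\Fcal_j$ are holomorphic subbundles) with diagonal part $\bar\p_{\mathrm{Gr}E}$, and $\nabla$ is block upper triangular with respect to the shifted grading by Griffiths transversality, its first subdiagonal being $\Phi_\Fcal$. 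Then I would apply the gauge transformations $g_\xi=\diag(\xi^{a}\Ibold_{E_1},\dots,\xi^{a-\ell+1}\Ibold_{E_\ell})$ as in \eqref{eq fixed point hol gauge trans}, normalized so $\det g_\xi=1$.

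The key computation is to track the effect of conjugating by $g_\xi$ on each block. Conjugation by $g_\xi$ multiplies the $(i,k)$-block of any endomorphism-valued form by $\xi^{k-i}$; combined with the overall scaling by $\xi$ in the $\lambda$-connection $(\xi,\bar\p_E,\xi\nabla)$ and the $\xi$-twisted Leibniz rule, the upshot is that $\bar\p_E$ transforms to $\bar\p_{\mathrm{Gr}E}+(\text{terms with positive powers of }\xi)$ — the strictly-upper-triangular part of $\bar\p_E$ acquires positive powers of $\xi$ — and $\xi\nabla$ transforms to $\Phi_\Fcal+(\text{terms with positive powers of }\xi)$, where the $\xi^0$-term on the connection side is exactly $\Phi_\Fcal$ because that block sits one step below the diagonal and is multiplied by $\xi^{-1}$ from conjugation, cancelling the $\xi$ from $\xi\nabla$. (This is the same bookkeeping as in the second half of the proof of Proposition \ref{prop stable man of VHS normal form}.) Hence $[(\xi,\bar\p_E,\xi\nabla)]=[(\xi,\ \bar\p_{\mathrm{Gr}E}+O(\xi),\ \Phi_\Fcal+O(\xi))]$, and letting $\xi\to 0$ the right side converges in the space of $\lambda$-connections to $(0,\bar\p_{\mathrm{Gr}E},\Phi_\Fcal)$.

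It remains to pass from convergence in the space of $\lambda$-connections to convergence of the moduli points, i.e.\ to know that this limit descends to $[(\bar\p_{\mathrm{Gr}E},\Phi_\Fcal)]\in\Mm_\rH\subset\Mm_\Hod$ and agrees with the $\BB$-limit. Here is where semistability of $(\bar\p_{\mathrm{Gr}E},\Phi_\Fcal)$ enters: by hypothesis the limiting Higgs bundle is semistable, hence represents an honest point of $\Mm_\rH$, so it lies in the domain where $\Mm_\Hod$ is a (separated) moduli space; since the $\C^\ast$-limit $\lim_{\xi\to 0}[(\xi,\bar\p_E,\xi\nabla)]$ exists in $\Mm_\Hod$ by Simpson's $\BB$-theory \cite{SimpsonHodgeFiltration} and the explicit family above limits to the point represented by $(0,\bar\p_{\mathrm{Gr}E},\Phi_\Fcal)$, the two must coincide by uniqueness of limits. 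I expect the main obstacle to be precisely this last step: making rigorous that the naive pointwise limit of the gauge-transformed family represents the $\BB$-limit in $\Mm_\Hod$ — one must ensure no jumping phenomenon occurs, which is exactly what semistability of the associated graded rules out, and one should cite the relevant separatedness/properness statement from \cite{SimpsonHodgeFiltration,SimpsonDeRhamStrata} rather than reprove it. The block-conjugation bookkeeping itself, while slightly intricate with the $\xi$-twisted Leibniz rule, is routine.
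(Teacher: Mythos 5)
Your proposal is correct and follows essentially the same route as the paper: choose a smooth splitting compatible with the filtration, apply the gauge transformations $g_\xi$ of \eqref{eq fixed point hol gauge trans} to get $(\xi,\ \bar\p_{\mathrm{Gr}E}+\sum_j\xi^j\beta_j,\ \Phi_\Fcal+\xi D^{1,0}_{\mathrm{Gr}E}+\sum_j\xi^{j+1}\varphi_j)$, and let $\xi\to 0$. The only difference is that you spell out the final passage from convergence of representatives to convergence in $\Mm_\Hod$ (via semistability and separatedness), which the paper leaves implicit.
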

\begin{proof}
Choose a smooth identification of $\Gr E$ with $E$ compatible with the obvious filtrations. With this choice, the bundle $\End(E)$ has a decomposition analogous to \eqref{eq End decomp}; namely,
\[\End(E)=\bigoplus\limits_{j=1-\ell}^{\ell-1}\End_j(E)~.\]
Note that the Higgs field $\Phi_\Fcal$ is a holomorphic section of $\End_{-1}(E)\otimes K.$  
Since  $\Fcal$ is Griffiths transverse there are $\beta_j\in\Omega^{0,1}(\End_j(E))$ and $\varphi_j\in\Omega^{1,0}(\End_j(E))$ so that
\[(\xi,\bar\p_E, \xi \nabla)=\Bigl(\xi~,~ \bar\p_{\mathrm{Gr} E}+\sum\limits_{j=1}^{\ell-1}\beta_j~,~ \xi \Phi_\Fcal+\xi D^{1,0}_{\mathrm{Gr} E}+\sum\limits_{j=1}^{\ell-1}\xi \varphi_j~\Bigr)\ ,\]
where is the $D^{1,0}_{\mathrm{Gr} E}$ connection induced by $\nabla.$
The gauge transformation $g_\xi$ from \eqref{eq fixed point hol gauge trans} acts as 
\[(\xi,\bar\p_E,\xi \nabla)\cdot g_\xi=\Bigl(\xi~,~\bar\p_{\mathrm{Gr} E}+\sum\limits_{j=1}^{\ell-1}\xi^j\beta_j~,~\Phi_{\Fcal}+\xi D^{1,0}_{\mathrm{Gr} E}+\sum\limits_{j=1}^{\ell-1}\xi^{j+1}\varphi_j\Bigr)~.\]
Now taking the limit $\xi\to 0$ yields $(\bar\p_{\mathrm{Gr} E},\Phi_\Fcal).$
\end{proof}

The key result is the following.
\begin{Theorem}[{\cite[Theorem 2.5]{SimpsonDeRhamStrata}}]
\label{THM Simpson transverse filtration}
  Given a $(\Ecal,\nabla)$ there exists a Griffiths transverse filtration $\Fcal$ of $\Ecal$ so that the associated Higgs bundle $(\bar\p_{\mathrm{Gr} E},\Phi_\Fcal)$ is semistable. Moreover, if $(\bar\p_{\mathrm{Gr} E},\Phi_\Fcal)$ is stable, then the  filtration is unique. 
\end{Theorem}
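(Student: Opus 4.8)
Fix a bundle with holomorphic connection $(\Ecal,\nabla)$. The plan is to construct the Griffiths transverse filtration by a greedy/iterative procedure on the Harder–Narasimhan (HN) type, exactly as outlined before the statement, and then to verify semistability of the associated graded Higgs bundle at the end of the process; uniqueness in the stable case is handled separately by a $\Phi$-subbundle argument. First I would set up the basic operation: given any Griffiths transverse filtration $\Fcal_\bullet$ of $\Ecal$ (starting with the trivial one $\Fcal_1=\Ecal$), form $(\Gr\Ecal,\Phi_\Fcal)$; if it is already semistable we are done. If not, it has a maximal destabilizing $\Phi_\Fcal$-invariant subsheaf $\Gcal\subset\Gr\Ecal$ with $\deg\Gcal>0$. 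The key observation is that $\Gcal$, being $\Phi_\Fcal$-invariant and a subsheaf of a graded object, has graded pieces $\Gcal_j\subset\Ecal_j$, and the preimage construction $\Fcal'_j := \pi_j^{-1}(\Gcal_j)$ (where $\pi_j:\Fcal_j\twoheadrightarrow\Ecal_j$) refines the filtration $\Fcal_\bullet$ to a new Griffiths transverse filtration $\Fcal'_\bullet$ — Griffiths transversality of $\Fcal'_\bullet$ follows precisely from the $\Phi_\Fcal$-invariance of $\Gcal$. One then checks that $\Gr\Ecal$ with respect to $\Fcal'_\bullet$ has the same underlying sheaf but a new HN type which is strictly smaller in the standard partial order on HN polygons (the destabilizing piece has been "split off" and its positive slope removed from the ambient quotient).

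Next I would argue termination: the HN polygons arising in this process are constrained — the slopes are bounded (degree is fixed, rank is $n$) and each refinement strictly decreases the polygon in a well-founded order, so after finitely many steps the associated graded Higgs bundle is semistable. This produces the desired $\Fcal$. (One must be a little careful that at each stage the new filtration still has length $\le n$ and that the graded pieces are genuine subbundles, not just subsheaves — saturating as needed, which does not affect Griffiths transversality.) Combined with Lemma \ref{Lem identifying limit}, this simultaneously identifies $\lim_{\xi\to 0}[(\xi,\bar\p_E,\xi\nabla)]$.

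For uniqueness under the stability hypothesis: suppose $\Fcal_\bullet$ and $\tilde\Fcal_\bullet$ are two Griffiths transverse filtrations whose associated graded Higgs bundles are both semistable, and suppose the first one is stable. I would compare them by looking at the "diagonal" — e.g.\ consider $\Fcal_j\cap\tilde\Fcal_k$ and the induced maps — or, more efficiently, observe that a Griffiths transverse filtration of $(\Ecal,\nabla)$ is the same data as an oper-type reduction, and that $\nabla$ preserves the image of $\tilde\Fcal_\bullet$ inside $\Gr_\Fcal\Ecal$ after passing to graded objects, producing a nonzero $\Phi_\Fcal$-invariant subsheaf of the stable $(\Gr_\Fcal\Ecal,\Phi_\Fcal)$; stability forces this subsheaf to have negative degree unless it is all of $\Ecal$, and tracking degrees forces the two filtrations to coincide. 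This is the standard "two maximal destabilizations of a stable object agree" mechanism adapted to the graded setting.

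The main obstacle I anticipate is making the termination argument genuinely rigorous: one needs the right well-founded invariant (I would use the HN polygon of $\Gr\Ecal$, or a lexicographic pair such as $(\text{number of HN steps},\text{maximal slope})$) together with a clean proof that each refinement step strictly decreases it, and one must confirm that the refinement really is Griffiths transverse and that its graded pieces can be taken locally free — the bookkeeping between subsheaves of $\Ecal$, subsheaves of $\Gr\Ecal$, and their saturations is where the subtlety lies. Since this is Simpson's Theorem 2.5, I would in practice cite \cite{SimpsonDeRhamStrata} for the details and only sketch this structure.
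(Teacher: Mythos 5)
The first thing to say is that the paper contains no proof of this statement: it is quoted directly from \cite{SimpsonDeRhamStrata} (Theorem 2.5), so there is no internal argument to compare yours against. Your outline --- iteratively modifying a Griffiths transverse filtration by pulling back the maximal destabilizing $\Phi_\Fcal$-invariant subsheaf of the associated graded Higgs bundle, proving termination by a well-founded invariant, and handling uniqueness by a stability/degree argument --- is in fact the same iterated destabilizing modification strategy used in Simpson's original proof, and your decision to cite \cite{SimpsonDeRhamStrata} for the details is exactly what the authors do.

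If you want the sketch to stand on its own, though, three points need repair. First, your justification that the maximal destabilizing subsheaf $\Gcal\subset\Gr\Ecal$ is graded (``being $\Phi_\Fcal$-invariant and a subsheaf of a graded object'') is not an argument: a subsheaf of a direct sum need not respect the summands. The correct mechanism is that $(\Gr\Ecal,\Phi_\Fcal)$ is a system of Hodge bundles, hence isomorphic to its $\CBbb^\ast$-rescalings via the grading automorphism, and the maximal destabilizing $\Phi_\Fcal$-invariant subsheaf is unique, so it is preserved by this action and therefore graded; once that is in place, your check that $\Fcal'_j=\pi_j^{-1}(\Gcal_j)$ (augmented by $\Fcal'_{\ell+1}=\Ecal$) is again Griffiths transverse is correct. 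Second, the termination invariant you propose (the HN polygon in its partial order) is precisely the step you flag as fragile, and it is not the cleanest choice: the modification increases the integer $\sum_j\deg\Fcal_j$ by exactly $\deg\Gcal\geq 1$, so it suffices to know that $\deg\Fcal_j$ is bounded above over all Griffiths transverse filtrations of the fixed flat bundle $(\Ecal,\nabla)$ --- a boundedness statement which itself requires proof (it uses flatness: a step on which the induced graded Higgs field vanishes is $\nabla$-invariant and hence of degree zero, while nonvanishing maps $\Ecal_j\to\Ecal_{j+1}\otimes K$ bound the remaining degrees). Without some such boundedness, the phrase ``well-founded order'' has no content. Third, the uniqueness argument is only gestured at, and ``two maximal destabilizations of a stable object agree'' is not the relevant mechanism, since the competing filtration only has a \emph{semistable} associated graded; one must compare $\tilde\Fcal_\bullet$ with $\Fcal_\bullet$ by inducing a filtration on $\Gr_\Fcal\Ecal$ and run a degree estimate playing stability of $(\Gr_\Fcal\Ecal,\Phi_\Fcal)$ against semistability of the other graded object. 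Given that the paper itself simply cites Simpson, doing the same is entirely appropriate; just do not present the two flagged steps as routine.
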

Recall that if $[(\bar\p_0,\Phi_0)]\in \VHS_\alpha$, then a holomorphic flat bundle $[(\Ecal, \nabla)]\in\Mm_{\rdR}$ is in  $\rW^1_\alpha(\bar\p_0,\Phi_0)$  if and only if 
\[\lim_{\xi\to0}[(\xi,\bar\p_E,\xi \nabla)]=[(\bar\p_0,\Phi_0)]~.\] 
The following is the de Rham analogue of Proposition \ref{prop stable man of VHS normal form}. It follows easily from Theorem \ref{THM Simpson transverse filtration}, Lemma \ref{Lem identifying limit}, and Proposition \ref{Prop flat conn of fixed point}.

\begin{Proposition} \label{prop:dr-normal-form}
Let $(\bar\p_0,\Phi_0)$ be a stable complex variation of Hodge structure with harmonic metric $h$. Let 
$(\Ecal,\nabla)$, $\Ecal=(E,\bar\p_0+\Phi^{\ast_h}_0)$, $\nabla=\p_0^h+\Phi_0$, be the associated
  holomorphic  bundle with holomorphic connection. Then $[(\Ecal,\nabla)]\in\rW^1_\alpha(\bar\p_0,\Phi_0)$  if and only if, after a gauge transformation,
\[\xymatrix{\bar\p_E-\bar\p_0-\Phi_0^{\ast_h}\in \Omega^{0,1}(N^+)&\text{and}&\nabla-\p_0^h-\Phi_0\in \Omega^{1,0}(L\oplus N^+)}~,\]
where $N_+$ and $L$ are defined in \eqref{eq N+ notation}.
\end{Proposition}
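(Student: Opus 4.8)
The plan is to mirror the proof of Proposition \ref{prop stable man of VHS normal form}, now using the dictionary between the $\C^*$-limit in $\Mm_\Hod$ and Griffiths-transverse filtrations provided by Lemma \ref{Lem identifying limit} and Theorem \ref{THM Simpson transverse filtration}. Throughout, gradings on $\End(E)$ are taken with respect to the holomorphic splitting $E=E_1\oplus\cdots\oplus E_\ell$ of the $\VHS$ $(\bar\p_0,\Phi_0)$, as in \eqref{eq End decomp}; thus $\bar\p_0$ and $\p_0^h$ preserve the grading (the splitting being $h$-orthogonal by Proposition \ref{Prop flat conn of fixed point}), while $\Phi_0\in\End_{-1}(E)$ and $\Phi_0^{\ast_h}\in\End_{+1}(E)\subset N_+$. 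Write $\Fcal_j:=E_1\oplus\cdots\oplus E_j$ for the standard smooth filtration attached to this splitting.

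For the ``if'' direction, suppose $\bar\p_E-\bar\p_0-\Phi_0^{\ast_h}\in\Omega^{0,1}(N_+)$ and $\nabla-\p_0^h-\Phi_0\in\Omega^{1,0}(L\oplus N_+)$; equivalently, $\bar\p_E=\bar\p_0+(\text{terms in }N_+)$ and $\nabla=\p_0^h+\Phi_0+(\text{terms in }L\oplus N_+)$. Since every term of positive degree maps $E_i$ into $\bigoplus_{k<i}E_k$, the $N_+$-part of $\bar\p_E$ preserves each $\Fcal_j$, so $\Fcal$ is a holomorphic filtration of $\Ecal$; and since $\Phi_0$ raises the degree by exactly one while $\p_0^h$ and the $L\oplus N_+$-terms do not raise it, $\nabla(\Fcal_j)\subset\Fcal_{j+1}\otimes K$, so $\Fcal$ is Griffiths transverse. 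Only the degree-$0$ part of $\bar\p_E$ and the degree-$(-1)$ part of $\nabla$ survive on the associated graded, whence $(\bar\p_{\mathrm{Gr} E},\Phi_\Fcal)=(\bar\p_0,\Phi_0)$, which is stable. By Lemma \ref{Lem identifying limit}, $\lim_{\xi\to0}[(\xi,\bar\p_E,\xi\nabla)]=[(\bar\p_0,\Phi_0)]\in\VHS_\alpha$, i.e.\ $[(\Ecal,\nabla)]\in\rW^1_\alpha(\bar\p_0,\Phi_0)$.

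For the ``only if'' direction, assume $[(\Ecal,\nabla)]\in\rW^1_\alpha(\bar\p_0,\Phi_0)$. Theorem \ref{THM Simpson transverse filtration} furnishes a Griffiths-transverse filtration $\Fcal'$ of $\Ecal$ with stable associated graded $(\bar\p_{\mathrm{Gr} E},\Phi_{\Fcal'})$, and by Lemma \ref{Lem identifying limit} its class equals the limit, which is $[(\bar\p_0,\Phi_0)]$ by hypothesis. Choosing a $C^\infty$ splitting of $\Ecal$ subordinate to $\Fcal'$ identifies $E$ smoothly with $\Gr\Ecal$, in which trivialization $\bar\p_E$ is block-triangular (preserving $\Fcal'$) with graded part $\bar\p_{\mathrm{Gr} E}$, and $\nabla$ is Griffiths transverse with degree-$(-1)$ part $\Phi_{\Fcal'}$ and degree-$0$ part some $(1,0)$-connection $\nabla^{(0)}$ on $\Gr\Ecal$. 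Because $(\bar\p_0,\Phi_0)$ is a stable $\VHS$, its Hodge grading from Proposition \ref{Prop: fixedpoints of C* action} is intrinsic — the cocharacter realizing the $\C^*$-fixed-point property is unique up to a central twist that acts trivially on $\End(E)$ — so a gauge transformation taking $(\bar\p_{\mathrm{Gr} E},\Phi_{\Fcal'})$ to $(\bar\p_0,\Phi_0)$ can be chosen grading-compatibly; after applying it (lifted through the smooth splitting) to $(\Ecal,\nabla)$, the filtration $\Fcal'$ becomes $\Fcal$. Block-triangularity of $\bar\p_E$ with graded part $\bar\p_0$ then reads $\bar\p_E-\bar\p_0\in\Omega^{0,1}(N_+)$, hence also $\bar\p_E-\bar\p_0-\Phi_0^{\ast_h}\in\Omega^{0,1}(N_+)$; and Griffiths transversality of $\nabla$ with degree-$(-1)$ part $\Phi_0$ gives $\nabla-\Phi_0\in\Omega^{1,0}(\End_{\geq0}(E)\cap\slfrak(E))$, so, as $\nabla-\p_0^h-\Phi_0$ differs from this only in the $L=\End_0(E)\cap\slfrak(E)$ component $\nabla^{(0)}-\p_0^h$, we conclude $\nabla-\p_0^h-\Phi_0\in\Omega^{1,0}(L\oplus N_+)$.

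The one step needing genuine care is the grading-matching in the ``only if'' direction: identifying the grading coming from Simpson's filtration with the intrinsic Hodge grading of $(\bar\p_0,\Phi_0)$, and promoting the abstract isomorphism of stable associated graded Higgs bundles to an explicit grading-preserving gauge transformation of the flat bundle. As in the Higgs-bundle case, it is stability of the $\VHS$ — so that $\Aut(\bar\p_0,\Phi_0)=\C^*$ and the Hodge grading is essentially unique — that makes this go through.
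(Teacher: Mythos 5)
Your proof is correct and follows exactly the route the paper intends: the paper's own ``proof'' is the one-line remark that the statement follows from Theorem \ref{THM Simpson transverse filtration}, Lemma \ref{Lem identifying limit}, and Proposition \ref{Prop flat conn of fixed point}, and your argument is a careful fleshing-out of precisely those ingredients. The extra detail you supply (uniqueness of the Hodge grading of a stable $\VHS$ via $\Aut(\bar\p_0,\Phi_0)=\C^\ast$, so the identification of Simpson's filtration with the standard one can be made grading-compatibly) is exactly the point the paper leaves implicit, and it is handled correctly.
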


We now prove the analogous result to Proposition \ref{prop:good-gauge} in the de Rham picture.
\begin{Proposition} \label{prop:good-gauge2}
Let $(\bar\p_E,\Phi)$ be a stable complex variation of Hodge structure with harmonic metric $h$ and associated flat connection 
$
D=\dbar_E+\partial_E^h+\Phi+\Phi^{\ast_h}
$.
Suppose $(\beta,\varphi)\in 
\Omega^{0,1}(N_+)\oplus \Omega^{1,0}(L\oplus N_+)$ is such that $D+\beta+\varphi$ is flat.
 Then there is a unique smooth section $f\in \Omega^0(N_+)$ such that applying the complex gauge transformation $g=\Ibold+f$, 
$$
(D+\beta+\varphi)\cdot g=D+\widetilde\beta+\widetilde\varphi\ ,
$$
then $(\widetilde\beta, \widetilde\varphi)\in \Scal^+_{(\dbar_E,\Phi)}$. Moreover, if $(\beta_1,\varphi_1), (\beta_2,\varphi_2)\in \Scal^+_{(\dbar_E,\Phi)}$ are such that $$
(D+\beta_1+\varphi_1)\cdot g=D+\beta_2+\varphi_2\ 
$$
for some $g\in \Gcal(E)$,
then $g$ is in the center and $(\beta_1,\varphi_1)= (\beta_2,\varphi_2)$.
\end{Proposition}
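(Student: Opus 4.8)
The plan is to prove the existence and uniqueness of $f$ first, by adapting the recursive argument of Proposition~\ref{prop:good-gauge}, and then to deduce the rigidity statement from that uniqueness together with the uniqueness of Simpson's Griffiths-transverse filtration in Theorem~\ref{THM Simpson transverse filtration}.

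For the existence of $f$, write $D=D''+D'$ with $D''=\dbar_E+\Phi$ and $D'=\partial_E^h+\Phi^{*_h}$, so that $\beta$ perturbs the $(0,1)$-type and $\varphi$ the $(1,0)$-type part of $D$. A direct computation shows that flatness of $D+\beta+\varphi$ amounts to the single equation $D''(\beta,\varphi)+D'(\beta,\varphi)+[\beta,\varphi]=0$ in $\Omega^{1,1}(\slfrak(E))$, i.e.\ precisely the \emph{sum} of the two equations cutting out $\Scal^+_{(\dbar_E,\Phi)}$. Gauge transformations preserve flatness, and $g=\Ibold+f$ with $f\in\Omega^0(N_+)$ preserves the shape $\widetilde\beta\in\Omega^{0,1}(N_+)$, $\widetilde\varphi\in\Omega^{1,0}(L\oplus N_+)$, since $\dbar_E,\partial_E^h$ preserve the grading, $\ad_{\Phi^{*_h}}(N_+)\subseteq N_+$, $[N_+,N_+]\subseteq N_+$, and $\ad_{\Phi}(N_+)\subseteq L\oplus N_+$ — the last inclusion being exactly the reason $L$ must be admitted into the $\varphi$-slot. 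It therefore suffices to choose $f$ so that $D'(\widetilde\beta,\widetilde\varphi)=0$: flatness then forces $D''(\widetilde\beta,\widetilde\varphi)+[\widetilde\beta,\widetilde\varphi]=0$ for free, and $(\widetilde\beta,\widetilde\varphi)\in\Scal^+_{(\dbar_E,\Phi)}$. Solving $D'(\widetilde\beta,\widetilde\varphi)=0$ is now done grade by grade, $f=f_1+\cdots+f_{\ell-1}$, $f_j\in\Omega^0(\End_j(E))$: isolating the degree-$j$ part of the equation modulo terms in $f_k$ with $k<j$, then contracting with $-i\Lambda$ and using the K\"ahler identities~\eqref{eqn:kahler}, reduces it to $(D'')^{\ast}D''(f_j)=(\text{expression in }f_{<j},\beta,\varphi)$, exactly as in Proposition~\ref{prop:good-gauge}. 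Stability of $(\dbar_E,\Phi)$ makes $(D'')^{\ast}D''$ invertible on $\Omega^0(\slfrak(E))$, so $f_j$, hence $f$, is uniquely determined.

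For the rigidity statement, suppose $(D+\beta_1+\varphi_1)\cdot g=D+\beta_2+\varphi_2$ with $(\beta_i,\varphi_i)\in\Scal^+_{(\dbar_E,\Phi)}$. Because of the shape of $(\beta_i,\varphi_i)$, the filtration $F_\bullet$ of $E$ by the subbundles $E_1\oplus\cdots\oplus E_j$ of the $\VHS$ splitting is holomorphic for $\dbar_E+\Phi^{*_h}+\beta_i$ and Griffiths transverse for the flat connection $D+\beta_i+\varphi_i$, with associated graded Higgs bundle equal to $(\dbar_E,\Phi)$ (cf.\ Proposition~\ref{prop:dr-normal-form}). Since $(\dbar_E,\Phi)$ is stable, Theorem~\ref{THM Simpson transverse filtration} says this is the \emph{unique} Griffiths-transverse filtration with semistable graded on each of the two flat bundles, so the isomorphism $g$ must preserve $F_\bullet$. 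Hence $\Gr(g)$ is an automorphism of $(\dbar_E,\Phi)$, which by stability is a central scalar $\zeta\Ibold$ with $\zeta^{n}=1$; consequently $g=\zeta(\Ibold+f)$ with $f\in\Omega^0(N_+)$. As $\zeta\Ibold$ acts trivially on connections, $(D+\beta_1+\varphi_1)\cdot(\Ibold+f)=D+\beta_2+\varphi_2$; and since $(\beta_1,\varphi_1)$ already lies in $\Scal^+_{(\dbar_E,\Phi)}$, the value $f=0$ is admissible in the first part of the proposition, so by the uniqueness proved there $f=0$. Therefore $g=\zeta\Ibold$ is central and $(\beta_1,\varphi_1)=(\beta_2,\varphi_2)$.

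The main obstacle is the bookkeeping in the existence step: checking that the degree-$j$ component of $D'(\widetilde\beta,\widetilde\varphi)=0$ genuinely isolates $f_j$ with leading operator $(D'')^{\ast}D''$ — in particular that the term $\ad_{\Phi}(f_{j+1})$ which appears in $\widetilde\varphi_{j-1}$ does not obstruct the low-to-high recursion — and that the higher-order terms in $f$ remain lower-degree corrections. This is parallel to the proof of Proposition~\ref{prop:good-gauge}, so the real point is to confirm that the parallel persists when the holomorphicity equation there is replaced by flatness here; once this is granted, the remainder, including the rigidity part, is essentially formal.
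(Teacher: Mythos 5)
Your proof is correct, and its existence half follows essentially the paper's own route: the paper also reduces to the recursion of Proposition \ref{prop:good-gauge}, the only new ingredient being the inhomogeneous term $g^{-1}\partial_E g$ in the transformed $(1,0)$-part, whose bracket with $\Phi^{\ast}$ is of lower order in the grading; your observation that flatness is exactly the sum of the two slice equations, so that only $D'(\widetilde\beta,\widetilde\varphi)=0$ needs to be arranged, is the same mechanism made explicit. (Minor index slip in your last paragraph: $\ad_\Phi(f_{j+1})$ lands in $\widetilde\varphi_{j}$, not $\widetilde\varphi_{j-1}$, so it only enters the degree-$(j+1)$ equation --- which is precisely why it is harmless.) Where you genuinely diverge is the rigidity statement. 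The paper treats an arbitrary $g\in\Gcal(E)$ head-on: writing $g=\Ibold+f$ with $f$ \emph{not} assumed to lie in $N_+$, it computes $(D'')^{\ast}D''(f)$ in terms of $[\Phi^{\ast},f]$, $\partial_E f$, $\beta_i$, $\varphi_i$, notes that the degree-$j$ component of the right-hand side involves only lower graded pieces of $f$ because $\Phi^{\ast}$ and the $\beta_i$ raise the grading, and concludes by induction and stability that $f$ is central. You instead invoke the uniqueness clause of Theorem \ref{THM Simpson transverse filtration}: the filtration by $E_1\oplus\cdots\oplus E_j$ is Griffiths transverse with stable associated graded $(\dbar_E,\Phi)$ for both slice connections, so $g$ must preserve it, its graded is scalar by stability, hence $g=\zeta(\Ibold+f)$ with $f\in\Omega^0(N_+)$, and the uniqueness from the existence half forces $f=0$. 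Both arguments are sound: the paper's is self-contained elliptic bookkeeping (K\"ahler identities plus stability, no appeal to the classification of filtrations), while yours is more conceptual, recycles the uniqueness you have already proved, and leans on Simpson's theorem --- which the paper in any case quotes and uses for Proposition \ref{prop:dr-normal-form} --- at the cost of verifying, as you do, that slice points carry the standard filtration with graded $(\dbar_E,\Phi)$, essentially the content of Lemma \ref{Lem identifying limit}.
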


\begin{proof}
 For the existence part, note that the only difference with the proof of Proposition  \ref{prop:good-gauge} is the addition of the term $g^{-1}\partial_0g$ for the change of $\varphi$ under a complex gauge transformation. But in terms of the graded pieces,  $[\Phi^\ast, g^{-1}\partial_0g]_j=0\, \mod j$, and so the same proof applies in this case as well. 

 We move on to prove the second statement.  
 Write $g=\Ibold+f$, a calculation  yields
$$
(D'')^\ast D''(f)=-i\Lambda\left([\Phi^\ast, f]\wedge\varphi_2+\varphi_1\wedge[\Phi^\ast, f]+\partial_0f\wedge\beta_2+\beta_1\wedge\partial_0 f\right)\ .
$$
But since $\Phi^\ast$ and $\beta_i$ raise the grading,  we have
$
(D'')^\ast D'' (f_j)=0\mod j
$.
By an inductive argument and the stability of $(\dbar_E,\Phi)$, the endomorphism $f$, and hence also $g$, must lie in the center. 
\end{proof}

\begin{Corollary} \label{cor:pdr}
Suppose $[(\dbar_E,\Phi)]\in \VHS_\alpha$ is stable. Then
the map $p_{\rdR}$ in \eqref{eqn:p} gives a biholomorphism $\Scal^+_{(\dbar_E,\Phi)}\isorightarrow \rW^1_\alpha(\dbar_E,\Phi)$.
\end{Corollary}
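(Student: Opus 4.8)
The plan is to run the argument of Corollary~\ref{cor:ph} essentially verbatim, with the de Rham inputs substituted for the Higgs ones: Proposition~\ref{prop:dr-normal-form} plays the role of Proposition~\ref{prop stable man of VHS normal form}, and Proposition~\ref{prop:good-gauge2} plays the role of Proposition~\ref{prop:good-gauge}. In fact the argument is a touch shorter than in the Higgs case, because the rescaling trick with the holomorphic gauge transformations \eqref{eq fixed point hol gauge trans} that was used to upgrade local injectivity to global injectivity there is now built directly into the second assertion of Proposition~\ref{prop:good-gauge2}.

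First I would establish injectivity of $p_{\rdR}$ on $\Scal^+_{(\dbar_E,\Phi)}$. Suppose $(\beta_1,\varphi_1),(\beta_2,\varphi_2)\in\Scal^+_{(\dbar_E,\Phi)}$ have $p_{\rdR}(\beta_1,\varphi_1)=p_{\rdR}(\beta_2,\varphi_2)$, i.e.\ $D+\beta_1+\varphi_1$ and $D+\beta_2+\varphi_2$ are $\Gcal(E)$-gauge equivalent; the uniqueness clause of Proposition~\ref{prop:good-gauge2} forces the intertwining gauge transformation into the center and hence $(\beta_1,\varphi_1)=(\beta_2,\varphi_2)$. Next I would identify the image with $\rW^1_\alpha(\dbar_E,\Phi)$. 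For the inclusion ``$\subseteq$'', any $(\beta,\varphi)\in\Scal^+_{(\dbar_E,\Phi)}$ has $\beta\in\Omega^{0,1}(N_+)$, $\varphi\in\Omega^{1,0}(L\oplus N_+)$, and $D+\beta+\varphi$ is flat (this is the content of the slice equations, cf.\ Lemma~\ref{lem:slice}), so the ``if'' direction of Proposition~\ref{prop:dr-normal-form} puts $[D+\beta+\varphi]$ in $\rW^1_\alpha(\dbar_E,\Phi)$; this simultaneously shows $p_{\rdR}$ is defined on the whole slice. For surjectivity, given $[(\Ecal,\nabla)]\in\rW^1_\alpha(\dbar_E,\Phi)$, Proposition~\ref{prop:dr-normal-form} produces a gauge representative of the form $D+\beta+\varphi$ with $\beta\in\Omega^{0,1}(N_+)$, $\varphi\in\Omega^{1,0}(L\oplus N_+)$ and $D+\beta+\varphi$ flat; the existence part of Proposition~\ref{prop:good-gauge2} then supplies $f\in\Omega^0(N_+)$ so that $g=\Ibold+f$ (which lies in $\Gcal(E)$ since $N_+$ is nilpotent, so $\det g=1$) carries $D+\beta+\varphi$ into $\Scal^+_{(\dbar_E,\Phi)}$, exhibiting $[(\Ecal,\nabla)]$ as a value of $p_{\rdR}$.

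Finally I would check holomorphy and upgrade the holomorphic bijection to a biholomorphism. The map $p_{\rdR}$ is holomorphic because the slice embeds holomorphically in $\Dscr$ by Lemma~\ref{lem:slice} and the quotient map to $\Mm_{\rdR}$ is holomorphic on the irreducible locus, which contains a neighborhood of $D$ since $(\dbar_E,\Phi)$ is stable. By Theorem~\ref{thm:kuranishi} together with Lemma~\ref{lem:dimension}, $\Scal^+_{(\dbar_E,\Phi)}$ is a complex manifold of dimension $\haf\dim\Mm_{\rH}=\haf\dim\Mm_{\rdR}$, while by Proposition~\ref{prop:lagrange} (i.e.\ \cite[Lemma~7.3]{SimpsonDeRhamStrata}) the fiber $\rW^1_\alpha(\dbar_E,\Phi)$ is a complex submanifold of $\Mm_{\rdR}$ of the same dimension. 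A holomorphic bijection between complex manifolds of equal dimension is a biholomorphism; alternatively one notes that $p_{\rdR}$ is a local biholomorphism near the origin by Proposition~\ref{prop:slice1} and spreads this over the whole slice using the $\C^\ast$-action \eqref{eq C* action on slice} as in the proof of Theorem~\ref{thm:kuranishi} (after lifting to $\Mm_{\Hod}$ via \eqref{eq c^* action on Hod}).

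I do not expect a genuine obstacle here: all the substance has already been absorbed into Propositions~\ref{prop:dr-normal-form} and~\ref{prop:good-gauge2}. The one point that deserves a line of care is confirming that \emph{every} point of $\Scal^+_{(\dbar_E,\Phi)}$ yields a completely reducible (in fact irreducible) flat connection, so that $p_{\rdR}$ is globally defined and its inverse is holomorphic; this is handled exactly as the analogous fact $\Scal^+_{(\dbar_E,\Phi)}\subset\Hscr^{\mathrm{ps}}$ in Corollary~\ref{cor:ph}, by contracting the slice to the origin with the $\C^\ast$-action and invoking openness of the stability condition together with its invariance along $\C^\ast$-orbits.
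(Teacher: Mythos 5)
Your proposal is correct and follows essentially the same route as the paper, whose proof of Corollary \ref{cor:pdr} is exactly the reduction you spell out: Proposition \ref{prop:dr-normal-form} for the normal form/surjectivity, Proposition \ref{prop:good-gauge2} for existence and uniqueness of the gauge into the slice (hence injectivity), and holomorphy of $p_{\rdR}$ to conclude. One minor caution: within the paper's own ordering, Proposition \ref{prop:lagrange} is proved \emph{after} and using Corollary \ref{cor:pdr}, so when you invoke smoothness and half-dimensionality of $\rW^1_\alpha(\dbar_E,\Phi)$ you should cite \cite[Lemma~7.3]{SimpsonDeRhamStrata} directly (as you do parenthetically) or fall back on your alternative local-biholomorphism argument, to avoid any appearance of circularity.
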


\begin{proof}  Immediate from Propositions \ref{prop:dr-normal-form}, \ref{prop:good-gauge2} and the fact that $p_\rdR$ is holomorphic.
\end{proof}
\begin{Remark}
     Note that Corollary \ref{cor:pdr} generalizes the parameterization of the components of opers from Example \ref{ex:fuchsian-slice} to the fibers of every smooth strata.
\end{Remark}

\subsection{Proofs}
We now put together the results from Sections \ref{sec: def theory section} and \ref{sec:strata} to prove Theorems \ref{thm:global-slice} and \ref{thm:transverse}, Proposition \ref{prop:lagrange},  and Corollary \ref{cor:ConformalLimit}.
\begin{proof}[Proof of Theorem \ref{thm:global-slice}]  
 Part (1) of Theorem \ref{thm:global-slice} is contained in Lemma \ref{lem:dimension} and Theorem \ref{thm:kuranishi}. 
 Part (2) follows from Corollary \ref{cor:ph}.
  For part (3), for each $\lambda\in \CBbb$ we extend the definition \eqref{eqn:p} to:
 \begin{align} \label{eqn:p-lambda}
 \begin{split}
 p_\lambda : \Scal_{(\dbar_E,\Phi)} &\lra \Mm_{\Hod} \\
 (\beta,\varphi)&\mapsto [(\lambda, \dbar_E+\lambda\Phi^\ast+\beta, \lambda\partial_E^h+\Phi+\varphi)]
 \end{split}
 \end{align}
 where $h$ is the harmonic metric for $(\dbar_E,\Phi)$. 
 The map $p_{\Hod}$ is then defined by $p_{\Hod}(\beta,\varphi,\lambda)=p_{\lambda}(\beta,\varphi)$. Now 
 $p_\lambda$ is a biholomorphism onto the fiber at $\lambda$. Indeed, this is true for $\lambda=0$ and $\lambda=1$. The
  $\CBbb^\ast$-action identifies the fibers $\lambda\neq 0$ with $\Mm_{\rdR}$. The result then follows by a small modification of Propositions \ref{prop:dr-normal-form} and \ref{prop:good-gauge2}.
\end{proof}

\begin{proof}[Proof of Proposition \ref{prop:lagrange}] For the first statement, 
since
$\rW^0_\alpha(\bar\p_0,\Phi_0)$ is  preserved by the complex structure $I$ and  is half-dimensional by Lemma \ref{Lemma dim of stable man}, it suffices to prove that
$\rW^0_\alpha(\bar\p_0,\Phi_0)$ is isotropic.  But by Corollary \ref{cor:ph}, tangent vectors to  $\rW^0_\alpha(\bar\p_0,\Phi_0)$ are represented by elements  in $\Omega^{0,1}(N_+)\oplus \Omega^{1,0}(L\oplus N_+)$.
The result now follows directly from the definition \eqref{eq hol symplectic form on Higgs bundles} of $\omega_I^\C$.
The second statement is similar: we note that 
$\rW^1_\alpha(\dbar_E,\Phi)$ is $J$-holomorphic and half-dimensional,
and the fact that it is isotropic  follows from Corollary \ref{cor:pdr} and the expression
 \eqref{eq: ABG hol symplectic form on de Rham}. 
    \end{proof}

\begin{proof}[Proof of Theorem \ref{thm:transverse}]
Let $h$ be the harmonic metric for $(\dbar_E,\Phi)$. By Proposition \ref{Prop first variation of metric}, the derivative of $h$ vanishes to first order for deformations in the Hodge slice. In particular, if $\ubold(t)\in \Scal^+_{(\dbar_E,\varphi)}$ is a smooth one (real) parameter family with $\ubold(0)=0$, $\dt\ubold(0)=(\beta,\varphi)\in\Hcal^1_+(\dbar_E,\Phi)$, then
$$
\Tcal[\ubold(t)]=[ \dbar_E+t\beta+\partial_E^h-t\beta^{\ast_h}+ \Phi+t\varphi+\Phi^{\ast_h}+t\varphi^{\ast_h}+O(t^2)]\ .
$$
Hence, in these coordinates, $d\Tcal(\beta,\varphi)=\beta+\varphi^\ast-\beta^\ast+\varphi$, where we have dropped $h$ from the notation. But according to \eqref{eq complex structures on de Rham}, 
$$
\beta+\varphi^\ast-\beta^\ast+\varphi = \mu - K(\mu)
$$
for the tangent vector $\mu=\beta+\varphi\in T_{\Tcal[(\dbar_E,\varphi)]}\rW^1_{\alpha}(\dbar_E,\Phi)$. 
This completes the proof.
\end{proof}

\section{Conformal Limits}
Let $[(\dbar_E,\Phi)]$ be a stable Higgs bundle with harmonic metric $h$, and
$$
\lim_{\xi\to 0}[(\dbar_E,\xi\Phi)]=[(\dbar_0,\Phi_0)]\in \VHS_\alpha\ .
$$
 Recall the real twistor line in $\Mm_{\Hod}$:
$$
\tau(\xi)=\dbar_E+\xi\partial_E^h+\Phi+\xi\Phi^\ast\ 
$$
which passes through $[(\dbar_E,\Phi)]$ at $\xi=0$ and $\Tcal([(\dbar_E,\Phi)])$ at $\xi=1$. In general,  $\tau(\xi)$ does not lie in the fiber $W_\alpha(\dbar_0,\Phi_0)$.
In other words, nonabelian Hodge does not map $W^0_\alpha(\dbar_0,\Phi_0)$ to $W^1_\alpha(\dbar_0,\Phi_0)$.
For example, if  $(\dbar_0,\Phi_0)$ is the Fuchsian point, then $\Tcal([(\dbar_E,\Phi)])$ is a real representation for every $[(\dbar_E,\Phi)]\in W^0_\alpha(\dbar_0,\Phi_0)$, whereas the holonomy of an oper in $W^1_\alpha(\dbar_0,\Phi_0)$ is not real in general (see Example \ref{ex:fuchsian-slice}). 

By contrast,
 the $\CBbb^\ast$-orbit of a point in $W^1_\alpha(\dbar_0,\Phi_0)$ does lie in $W_\alpha(\dbar_0,\Phi_0)$, but it collapses the entire stratum $W^1_\alpha(\dbar_0,\Phi_0)$ to the point $[(\dbar_0,\Phi_0)]$ at $\xi=0$. Notice, however, that these two holomorphic sections  of $\Mm_{\Hod}\to \CBbb$ -- the twistor lines and the $\CBbb^\ast$-orbits --   agree for the twistor line through $[(\dbar_0,\Phi_0)]$ itself. In Theorem \ref{thm:global-slice}, we have produced a third section, $p_\xi$ from \eqref{eqn:p-lambda}, which also agrees on the twistor lines through $\VHS$, but which gives a  biholomorphism 
 $W^0_\alpha(\dbar_0,\Phi_0)$ and $W^1_\alpha(\dbar_0,\Phi_0)$.

In  \cite{GaiottoConj} Gaiotto introduced a rescaled version of these two sections to produce the family \eqref{eqn:gaiotto}. In the case where $[(\dbar_0,\Phi_0)]$ is the Fuchsian point (Example \ref{ex:fuchsian}), he  conjectured the existence of a well-defined limit as $R\to 0$, and that it should be an oper. As evidence, he pointed to the analogy between the cyclic structure of the Higgs field and the jet bundle description of opers.
The conjecture was recently proven in \cite{GaiottoLimitsOPERS}.  

In this section, we prove that in fact the limits exist in much greater generality. As an example, at the opposite extreme of the original conjecture is the case where $(E,\dbar_0)$ is a stable bundle. It is not hard to see that then the harmonic metric for $(\dbar_E, R\Phi)$ converges to the Hermitian-Einstein metric $h_0$ on $(E,\dbar_0)$. Hence, the flat connection in \eqref{eqn:gaiotto} simply converges  to $\dbar_E+\partial_E^{h_0}+\hbar^{-1}\Phi$, and this in turn lies in the open stratum in $\Mm_{\rdR}$.

Let $[(\dbar_E,\Phi)]\in \VHS_\alpha$ be a stable Higgs bundle,
and let $h$ denote the harmonic metric for $(\dbar_E,\Phi)$. 
Throughout this section, an unannotated $\ast$ will refer to adjoints with respect to $h$, and we set $\partial_E:=\partial_E^h$.  Endomorphisms will be written with respect to the splitting in Proposition \ref{Prop: fixedpoints of C* action}.
By Corollary \ref{cor:ph}, any point in $\rW^0_\alpha(\dbar_E,\Phi)$ is of the form $[(\dbar_E+\beta,\Phi+\varphi)]$ for $\ubold=(\beta,\varphi)\in S^+_{(\dbar_E,\Phi)}$. Fix $\hbar>0$.
Recall from \eqref{eqn:p-lambda} that
$$
p_{\hbar}(\ubold)=\dbar_E+\hbar\Phi^\ast+ \beta +\hbar\partial_E^h+\Phi+\varphi\ .
$$
For convenience, set $\dbar_\ubold:=\dbar_E+\beta$ and $\Phi_\ubold:=\Phi+\varphi$.
For  $R>0$,the connection
$$
D_{(\ubold, R)}:= \hbar^{-1}\Phi_{\ubold}+ \dbar_\ubold+\partial^{\ast_{h(\ubold, R)}}_\ubold +\hbar R^2\Phi_{\ubold}^{\ast_{h(\ubold, R)}} 
$$
is flat for the harmonic metric $h(\ubold, R)$ for $(\dbar_\ubold, R\Phi_\ubold)$. 

The following is a  restatement of  Theorem \ref{thm:ConformalLimit} and Corollary \ref{cor:ConformalLimit}.
\begin{Proposition}
As $R\to 0$, the flat connections $D_{(\ubold, R)}$ converge smoothly to 
$$
 \dbar_E+\partial_E+ \beta+\hbar^{-1}\Phi_{\ubold}+\hbar\Phi^\ast\ .
$$
In particular,
$$\lim_{R\to 0} [D_{(\ubold, R)}]=
\hbar^{-1}\cdot p_{\hbar}(\ubold)\ .
$$
\end{Proposition}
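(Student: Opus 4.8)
The plan is to reduce the Proposition to the asymptotics of the harmonic metrics and to carry out one graded (block‑by‑block) computation. Note first that, granting the smooth convergence $D_{(\ubold,R)}\to \dbar_E+\partial_E+\beta+\hbar^{-1}\Phi_\ubold+\hbar\Phi^\ast=:D_\infty$, the ``in particular'' is a formal consequence. Writing $\dbar_E+\beta=\dbar_\ubold$ and $\Phi+\varphi=\Phi_\ubold$, the definition \eqref{eqn:p-lambda} gives $p_\hbar(\ubold)=[(\hbar,\dbar_\ubold+\hbar\Phi^\ast,\hbar\partial_E+\Phi_\ubold)]$, so by the $\CBbb^\ast$‑action \eqref{eq c^* action on Hod} we have $\hbar^{-1}\cdot p_\hbar(\ubold)=[(1,\dbar_\ubold+\hbar\Phi^\ast,\partial_E+\hbar^{-1}\Phi_\ubold)]$, whose underlying flat connection is exactly $D_\infty$. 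Since $(\dbar_\ubold,R\Phi_\ubold)$ is stable for every $R>0$ (it represents a point of the smooth locus of $\Mm_\rH$, lying in $\rW^0_\alpha(\dbar_E,\Phi)$ for the stable $\VHS$), each $D_{(\ubold,R)}$ is irreducible and flat, while $D_\infty$, representing the point $\hbar^{-1}\cdot p_\hbar(\ubold)$ of $\Mm_\rdR$, is completely reducible; continuity of the quotient $\Dscr^{\mathrm{cr}}\to\Mm_\rdR$ then yields $\lim_{R\to0}[D_{(\ubold,R)}]=[D_\infty]=\hbar^{-1}\cdot p_\hbar(\ubold)$.

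For the convergence itself, the first step is to control the harmonic metric $h(\ubold,R)$ of $(\dbar_\ubold,R\Phi_\ubold)$ as $R\to0$. Applying the $\CBbb^\ast$‑family of holomorphic gauge transformations $g_\xi$ of \eqref{eq fixed point hol gauge trans} with $\xi=R$ gives $(\dbar_\ubold,R\Phi_\ubold)\cdot g_R=p_\rH(\ubold_R)$, where $\ubold_R=(\sum_{j\ge1}R^j\beta_j,\sum_{j\ge0}R^{j+1}\varphi_j)$ is obtained from $\ubold$ by the $\CBbb^\ast$‑action \eqref{eq C* action on slice} on $\Ss^+_{(\dbar_E,\Phi)}$ (which preserves $\Ss^+_{(\dbar_E,\Phi)}$, by the Lemma preceding Theorem \ref{thm:kuranishi}). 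This is a polynomial family in $R$ of stable Higgs bundles converging to the stable point $(\dbar_E,\Phi)$, so its harmonic metric $\widetilde h_R$ depends smoothly on $R$, with $\widetilde h_R\to h$ in $C^\infty$, and by gauge‑naturality of harmonic metrics $h(\ubold,R)=(g_R^{-1})^\ast\widetilde h_R$.

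The crucial input is a graded sharpening of Proposition \ref{Prop first variation of metric}. Write $\widetilde h_R=h\exp(s_R)$ and decompose $s_R=\sum_q (s_R)_q$ along the grading \eqref{eq End decomp}. Since $\ubold_R\in\Ss^+_{(\dbar_E,\Phi)}$ it satisfies $D'(\ubold_R)=0$; the computation in the proof of Proposition \ref{Prop first variation of metric} shows that this kills the linear‑in‑deformation part of the Hitchin defect $F_{(\dbar_\ubold,h)}+[R\Phi_\ubold,(R\Phi_\ubold)^{\ast}]$, so this defect reduces to its quadratic part $-[\beta(R)\wedge\beta(R)^{\ast}]+[\varphi(R)\wedge\varphi(R)^{\ast}]$ with $\beta(R)=\sum R^j\beta_j$, $\varphi(R)=\sum R^{j+1}\varphi_j$, whose $\End_q$‑component is $O(R^{|q|+2})$ by additivity of the grading under bracket. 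Feeding this into the Hitchin equation for $\widetilde h_R$ and bootstrapping with the grading‑preserving invertible linearized operator gives $(s_R)_q=O(R^{|q|+2})$ in $C^\infty$. Conjugating back by $g_R=\diag(R^{c_i}\Id_{E_i})$ (with $c_i-c_{i+1}=-1$), the diagonal blocks of $h(\ubold,R)^{\pm1}$ scale like $R^{\mp2c_i}$ while every off‑diagonal correction is suppressed by the factor $R^{|q|+2}$ coming from the relevant component of $s_R$.

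Finally one substitutes into $D_{(\ubold,R)}=\hbar^{-1}\Phi_\ubold+\dbar_\ubold+\partial^{h(\ubold,R)}_\ubold +\hbar R^2\Phi_{\ubold}^{\ast_{h(\ubold,R)}}$, using $\Phi_\ubold^{\ast_{h(\ubold,R)}}=h(\ubold,R)^{-1}\Phi_\ubold^{\ast}h(\ubold,R)$ and the analogous change‑of‑metric formula for the $(1,0)$‑Chern operator $\partial^{h(\ubold,R)}_\ubold$ (which for the VHS metric is $\partial_E-\beta^\ast$), and expands block by block. The diagonal of $h(\ubold,R)^{\pm1}$ contributes to $\hbar R^2\Phi_\ubold^{\ast_{h(\ubold,R)}}$ exactly $\hbar\Phi^{\ast}$ — the $\End_{+1}$‑part $\Phi^{\ast}$ of $\Phi_\ubold^{\ast}$ is conjugated by blocks differing by $R^{-2}$, which cancels the $R^2$ — while the $\End_{\le0}$‑part $\varphi^{\ast}$ and, by $(s_R)_q=O(R^{|q|+2})$, all remaining corrections carry a strictly positive power of $R$; likewise $\partial^{h(\ubold,R)}_\ubold-\partial_E=O(R^2)$. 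Hence $D_{(\ubold,R)}\to\dbar_\ubold+\partial_E+\hbar^{-1}\Phi_\ubold+\hbar\Phi^{\ast}=D_\infty$. I expect the last two steps to be the main obstacle: proving the graded decay $(s_R)_q=O(R^{|q|+2})$ and then controlling the block‑by‑block cancellations, since each ``degenerate'' metric block individually blows up like a power of $R$ and only their combination converges; the slice condition $D'(\ubold_R)=0$, which is precisely what distinguishes $\Ss^+_{(\dbar_E,\Phi)}$ from the de Rham slice, is exactly what makes the estimate sharp enough.
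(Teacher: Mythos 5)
Your overall strategy is sound and genuinely different in organization from the paper's. You stay in the slice frame: gauge $(\dbar_\ubold,R\Phi_\ubold)$ by $g_R$ from \eqref{eq fixed point hol gauge trans} to the scaled slice point $\ubold_R$, use smooth dependence of the harmonic metric plus the cancellation of Proposition \ref{Prop first variation of metric} (the slice condition $D'(\ubold_R)=0$ kills the linear defect, leaving only $[\beta(R)\wedge\beta(R)^\ast]$- and $[\varphi(R)\wedge\varphi(R)^\ast]$-type terms, whose $\End_q$-component is indeed $O(R^{|q|+2})$), and then conjugate back. The paper instead conjugates \emph{first}: it takes the explicit approximate metric $h_R'=g(h)$, computes the connection directly (\eqref{eqn:phi-prime}--\eqref{eqn:curv-prime}), notes $N_{(\ubold,0)}(0)=0$ by the same slice condition, and produces the true harmonic metric as a correction $e^{f_R}$ by the implicit function theorem in $R$ at $R=0$, with Lemma \ref{lem:no-kernel} (a graded induction) supplying invertibility of $dN_{(\ubold,0)}(0)$. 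The payoff of that arrangement is that only softness is needed: $f_R\to 0$ suffices, and no rate of decay of the metric correction ever has to be established. Your arrangement, by contrast, \emph{forces} a quantitative statement, because conjugating back by $g_R$ amplifies the $\End_q$-component of $s_R$ by $R^{-|q|}$; that is exactly why you need $(s_R)_q=O(R^{|q|+2})$.

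That graded estimate is true --- it is essentially equivalent to the smallness of the paper's correction, since $2f_R=\Ad_{g_R^{-1}}(s_R)$ up to conventions --- but as written it is the entire content of the proof and your one-sentence justification does not yet deliver it. The linearized operator $\dt s\mapsto \dbar_E\partial_E\dt s+[\Phi,[\Phi^\ast,\dt s]]$ is indeed grading-preserving and invertible by stability, but the error terms that are \emph{linear} in the deformation $(\beta(R),\varphi(R))$ (or its adjoint) and linear in $s_R$ shift the grading, so the component equations couple and a single inversion does not produce the rates; you need an iterative (cap-raising) weighted argument, or equivalently an implicit-function-theorem argument in $R$-weighted graded norms. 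The bookkeeping does close --- every deformation factor carries an $R$-weight at least equal to the modulus of its degree shift, so with the a priori $s_R=O(R^2)$ each pass improves the cap by one --- but this must be carried out, including the $C^1$ (in fact $C^\infty$, via elliptic regularity) control needed for the Chern-connection term. So: viable alternative route, correct identification of all the inputs, but the bootstrap is the crux and is currently asserted rather than proved; the paper's conjugate-first-then-IFT scheme is the cleaner way to avoid it.
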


\begin{proof} Since the argument is modeled closely on the proof in \break\cite{GaiottoLimitsOPERS}, we shall be brief. 
Let $[(\dbar_E,\Phi)]\in \VHS_\alpha$ be a stable point, 
First, some preliminaries. Let $g\in \Gcal(E)$. 
Define a new metric $g(h)$ by the rule:
$\langle u,v\rangle_{g(h)}=\langle gu, gv\rangle_{h}$.  Let $k=g^\ast g$. Then we have for any Higgs bundle $(\dbar_E,\Phi)$,
\begin{align}
\Phi^{\ast_{g(h)}} &= k^{-1}\Phi^{\ast}k \label{eqn:phi-change} \\
\partial^{g(h)}_E &= k^{-1}\circ \partial_E\circ k  \label{eqn:d-change} \\
F_{(\dbar_E, g(h))}&= F_{(\dbar_E, h)} +\dbar_E(k^{-1} \partial_E(k)) \label{eqn:f-change}
\end{align}
Recall that an expression of the form $F_{(\dbar_E, h')}$ denotes the curvature of the Chern connection associated to $\dbar_E$ and a metric $h'$.

With this understood, we first modify $h$ to get a metric $h_R'$ by using:
$$
g=\left(\begin{matrix} R^{m_1/2}&&0\\&\ddots&\\ 0&&R^{m_\ell/2}\end{matrix}\right)\ , \ k=\left(\begin{matrix} R^{m_1}&&0\\&\ddots&\\ 0&&R^{m_\ell}\end{matrix}\right)
$$
where  $m_j-m_{j+1}=2$, $j=1,\ldots, \ell-1$,  and $\sum_{i=1}^\ell (\rk E_j)m_j=0$.
Then using \eqref{eqn:phi-change} and \eqref{eqn:d-change}, we have:
\begin{align*}
\dbar_\ubold+\partial^{h_R'}_\ubold&= \dbar_E+\partial_E+ \beta-\sum_{j=1}^{\ell-1}R^{2j} (\beta_j)^\ast  \\
 \Phi^{\ast_{h_R'}}_\ubold&=R^{-2}\Phi^\ast+  \sum_{j=0}^{\ell-1}R^{2j} (\varphi_j)^\ast
\end{align*}
Taking the limit as $R\to 0$ we have
\begin{align}  \label{eqn:limit-flat}
\begin{split}
D_{(\ubold, 0)} :&=\lim_{R\to 0}\left\{ \hbar^{-1}\Phi_{\ubold}+ \dbar_\ubold+\partial^{h_R'}_\ubold+\hbar R^2\Phi_{\ubold}^{\ast_{h_R'}} \right\}   \\
&= \hbar^{-1}\Phi_{\ubold}+\dbar_E+\partial_E+ \beta+\hbar\Phi^\ast \ .
\end{split}
\end{align}

Let us record
\begin{align}
R^2 \Phi^{\ast_{h_R'}}_\ubold&=\Phi^\ast+  \sum_{j=0}^{\ell-1}R^{2j+2} (\varphi_j)^\ast \label{eqn:phi-prime}\\
F_{(\dbar_\ubold, h_R')}&=F_{(\dbar_E, h)}+\biggl[ \beta, \sum_{j=1}^{\ell-1}R^{2j} (\beta_j)^\ast\biggr]+
\partial_E\beta-\sum_{j=1}^{\ell-1}R^{2j} \dbar_E(\beta_j)^\ast \label{eqn:curv-prime}
 \end{align}
The goal now is to find a solution of the form $h(\ubold, R)=g_R(h_R')$, where $g_R=\exp(f_R)$, for $f_R$ a (traceless) $h_R'$-hermitian endomorphism. 
Then $k_R=\exp(2f_R)$ in eqs.\ \eqref{eqn:phi-change}--\eqref{eqn:f-change}.  This means we set $N_{(\ubold, R)}(f_R)=0$, where
\begin{equation*}\label{eqn:N}
N_{(\ubold, R)}(f_R):=
\dbar_\ubold(e^{-2f_R}\partial_\ubold^{h_R'}(e^{2f_R}))+ F_{(\dbar_\ubold, h_R')}+\left[\Phi_\ubold, e^{-2f_R}\left(R^2\Phi^{\ast_{h_R'}}_\ubold \right)e^{2f_R}\right]
\end{equation*}
Note that: $N_{(\ubold, 0)}(0)=0$.
  Now consider the linearization in $R$ at $R=0$.  We denote derivatives at $R=0$ by $\dt f$, for example.  Now $f_R$ is determined by a section $(f_L(R), f_+(R))$ of $L\oplus N_+$. 
  Specifically, 
  $$
f_R=f_L(R)+ f_{+}(R)+\sum_{j=1}^{\ell-1} R^{2j} \left(f_{+, j}(R)\right)^\ast\ .
$$
Hence, we may consider $N_{(\ubold, R)}$ as a smooth map $C^{2,\alpha}(L\oplus N_+)\to C^\alpha(L\oplus N_+)$.   By \eqref{eqn:phi-prime} and \eqref{eqn:curv-prime}, 
\begin{equation} \label{eqn:dN}
dN_{(\ubold, 0)}(0)\dt f=\dbar_\ubold\partial_E \dt f+\left[\Phi_\ubold, \left[\Phi^\ast, \dt f\right]\right]\ .
\end{equation}

\begin{Lemma} \label{lem:no-kernel}
 $\ker dN_{(\ubold, 0)}(0)=\{0\}$.
\end{Lemma}

\begin{proof}
Suppose $\dt f\in \ker dN_{(\ubold, 0)}(0)$. Write the decomposition into components as $\dt f=\dt f_L+\dt f_+$.
Using \eqref{eqn:hitchin}, 
$$
\dbar_E\partial_E=-\partial_E\dbar_E+F_{(\dbar_E, h)}=-\partial_E\dbar_E-\Phi\Phi^\ast-\Phi^\ast\Phi\ .
$$
  Then a direct calculation from \eqref{eqn:dN} yields
$
(D'')^\ast D'' ( \dt f_L)=
0$. As in the proof of Proposition \ref{prop:good-gauge}, the operator on the left is invertible, so $\dt f_L=0$.  Using this, it follows from \eqref{eqn:dN} that the first graded piece of $\dt f_+$ satisfies $(D'')^\ast D'' ({\dt f}_{+,1})=
0$, and therefore vanishes as well. More generally, if $\dt f_{+,k}=$ for $k<j$ then we have $(D'')^\ast D'' (\dt f_{+,j})=
0$. The result now follows.
\end{proof}
\noindent
By the lemma and the implicit function theorem, there is a unique  $f_R$, $f_0=0$, such that $N_{(\ubold, R)}(f_R)=0$  for $R$ sufficiently small. 
As in \cite{GaiottoLimitsOPERS}, and by \eqref{eqn:limit-flat}, this completes the proof. 
\end{proof}

\begin{Remark}  \label{rmk:discontinuous}
Let us point out that the conformal limit, regarded as a map from an open dense set in $\Mm_{\rH}$ to $\Mm_{\rdR}$, is not continuous. For example, if it were  then  the image of the $\CBbb^\ast$-orbit of a Higgs bundle $(\dbar_E,\Phi)$ with $(E,\dbar_E)$ stable and $\Phi\neq 0$ nilpotent, would complete to a holomorphically embedded $\PBbb^1\subset \Mm_{\rdR}$ which, since the latter is an affine variety, cannot exist.
\end{Remark}

\bibliography{mybib}{}

\newcommand{\etalchar}[1]{$^{#1}$}
\begin{thebibliography}{DFK{\etalchar{+}}16}

\bibitem[BD96]{BeilinsonDrinfeldOPERS}
A.~A. Beilinson and V.~G. Drinfeld.
\newblock Quantization of {H}itchin's fibration and {L}anglands' program.
\newblock In {\em Algebraic and geometric methods in mathematical physics
  ({K}aciveli, 1993)}, volume~19 of {\em Math. Phys. Stud.}, pages 3--7. Kluwer
  Acad. Publ., Dordrecht, 1996.

\bibitem[Cor88]{canonicalmetrics}
Kevin Corlette.
\newblock Flat {$G$}-bundles with canonical metrics.
\newblock {\em J. Differential Geom.}, 28(3):361--382, 1988.

\bibitem[Del]{DeligneLetter}
Pierre Deligne.
\newblock Letter to {C}. {S}impson.

\bibitem[DFK{\etalchar{+}}16]{GaiottoLimitsOPERS}
O.~{Dumitrescu}, L.~{Fredrickson}, G.~{Kydonakis}, R.~{Mazzeo}, M.~{Mulase},
  and A.~{Neitzke}.
\newblock {Opers versus nonabelian Hodge}.
\newblock {\em ArXiv e-prints}, July 2016.

\bibitem[Don87]{harmoicmetric}
S.~K. Donaldson.
\newblock Twisted harmonic maps and the self-duality equations.
\newblock {\em Proc. London Math. Soc. (3)}, 55(1):127--131, 1987.

\bibitem[{Gai}14]{GaiottoConj}
D.~{Gaiotto}.
\newblock {Opers and TBA}.
\newblock {\em ArXiv e-prints}, March 2014.

\bibitem[Hit87a]{selfduality}
N.~J. Hitchin.
\newblock The self-duality equations on a {R}iemann surface.
\newblock {\em Proc. London Math. Soc. (3)}, 55(1):59--126, 1987.

\bibitem[Hit87b]{IntSystemFibration}
Nigel Hitchin.
\newblock Stable bundles and integrable systems.
\newblock {\em Duke Math. J.}, 54(1):91--114, 1987.

\bibitem[Hit92]{liegroupsteichmuller}
N.~J. Hitchin.
\newblock Lie groups and {T}eichm\"uller space.
\newblock {\em Topology}, 31(3):449--473, 1992.

\bibitem[Kob87]{DiffGeomCompVectBun}
Shoshichi Kobayashi.
\newblock {\em Differential geometry of complex vector bundles}, volume~15 of
  {\em Publications of the Mathematical Society of Japan}.
\newblock Princeton University Press, Princeton, NJ; Iwanami Shoten, Tokyo,
  1987.
\newblock Kan{\^o} Memorial Lectures, 5.

\bibitem[KW07]{KapustinWitten}
Anton Kapustin and Edward Witten.
\newblock Electric-magnetic duality and the geometric {L}anglands program.
\newblock {\em Commun. Number Theory Phys.}, 1(1):1--236, 2007.

\bibitem[LSS13]{ClosureOfDeRhamStrata4P1}
Frank Loray, Masa-Hiko Saito, and Carlos Simpson.
\newblock Foliations on the moduli space of rank two connections on the
  projective line minus four points.
\newblock In {\em Geometric and differential {G}alois theories}, volume~27 of
  {\em S\'emin. Congr.}, pages 117--170. Soc. Math. France, Paris, 2013.

\bibitem[LW18]{LabourieWentworthFuchLocus}
F.~{Labourie} and R.~{Wentworth}.
\newblock Variations along the {F}uchsian locus.
\newblock {\em Ann. Sci. \'Ec. Norm. Sup\'er. (4)}, 51(2):487--547, 2018.

\bibitem[Nit91]{NitsureModuliofPairs}
Nitin Nitsure.
\newblock Moduli space of semistable pairs on a curve.
\newblock {\em Proc. London Math. Soc. (3)}, 62(2):275--300, 1991.

\bibitem[Sim88]{SimpsonVHS}
Carlos~T. Simpson.
\newblock Constructing variations of {H}odge structure using {Y}ang-{M}ills
  theory and applications to uniformization.
\newblock {\em J. Amer. Math. Soc.}, 1(4):867--918, 1988.

\bibitem[Sim94]{Simpson:94b}
Carlos~T. Simpson.
\newblock Moduli of representations of the fundamental group of a smooth
  projective variety. {II}.
\newblock {\em Inst. Hautes \'Etudes Sci. Publ. Math.}, (80):5--79 (1995),
  1994.

\bibitem[Sim97]{SimpsonHodgeFiltration}
Carlos Simpson.
\newblock The {H}odge filtration on nonabelian cohomology.
\newblock In {\em Algebraic geometry---{S}anta {C}ruz 1995}, volume~62 of {\em
  Proc. Sympos. Pure Math.}, pages 217--281. Amer. Math. Soc., Providence, RI,
  1997.

\bibitem[Sim10]{SimpsonDeRhamStrata}
Carlos Simpson.
\newblock Iterated destabilizing modifications for vector bundles with
  connection.
\newblock In {\em Vector bundles and complex geometry}, volume 522 of {\em
  Contemp. Math.}, pages 183--206. Amer. Math. Soc., Providence, RI, 2010.

\bibitem[Wol89]{TeichOfHarmonic}
Michael Wolf.
\newblock The {T}eichm\"uller theory of harmonic maps.
\newblock {\em J. Differential Geom.}, 29(2):449--479, 1989.

\end{thebibliography}
\bibliographystyle{alpha}

\end{document}